\newcommand{\N}{{\mathbb N}}
\newcommand{\R}{{\mathbb R}}
\newtheoremstyle{mystyle}               
{}                
{}                
{}        
{}                
{\bfseries \itshape}       
{.}      
{ }      
{}       
\newtheorem{theorem}{Theorem}[section]
\newtheorem{proposition}[theorem]{Proposition}
\newtheorem{lemma}[theorem]{Lemma} 
\newtheorem{corollary}[theorem]{Corollary}
\theoremstyle{definition}
\newtheorem{definition}[theorem]{Definition}
\newtheorem{example}[theorem]{Example}	
\theoremstyle{mystyle}
\newtheorem{remark}[theorem]{Remark}
\numberwithin{equation}{section}
\newtheorem{notation}[theorem]{Notation}
\title{Spectral characterization of the constant sign derivatives of Green's function related to two point boundary value conditions}
\date{ }
\author{Alberto Cabada, Luc{\' i}a L\'opez-Somoza and Mouhcine Yousfi\\
	$^1$ CITMAga, 15782, Santiago de Compostela, Galicia, Spain\\
	$^2$ Departamento de Estatística, Análise Matemática e Optimización\\
	Facultade de Matem\'aticas, Universidade de Santiago de Com\-pos\-te\-la, Spain.\\
	alberto.cabada@usc.es; lucia.lopez.somoza@usc.es; yousfi.mouhcine@usc.es}
\begin{document}
	\maketitle
	\begin{abstract}
		In this paper we will study the set of parameters in which certain partial derivatives of the Green's function, related to a $n$-order linear operator $T_{n}[M]$, depending on a real parameter $M$, coupled to different two-point boundary conditions, are of constant sign. We will do it without using the explicit expression of the Green's function. The constant sign interval will be characterized by the first eigenvalue  related to suitable boundary conditions of the studied operator. As a consequence of the main result, we will  be able to give sufficient conditions to ensure that the derivatives of Green's function cannot be nonpositive (nonnegative). These characterizations and the obtained results can be used to deduce the existence of solutions of nonlinear problems under additional conditions on the nonlinear part. To illustrate the obtained results, some examples are given.
	\end{abstract}

		\noindent{\bf AMS Subject Classifications:}  34B05, 34B08, 34B09, 34B15, 34B18, 34B27.

	\noindent{\bf Keywords:} Green’s function, Two-point boundary conditions, Spectral theory, Comparison results, Constant sign.

	\section{Introduction}
	Many papers in the literature have studied the qualitative properties of solutions of differential equations coupled with different boundary conditions. Several techniques have been used to ensure the existence,  non-existence and multiplicity of solutions. Among them, stands out the method of lower and upper solutions, coupled with monotone iterative techniques which, as it is very well-known, are equivalent to the constant sign of the Green's function of the corresponding linear problem. Moreover the method of constructing suitable cones in Banach spaces where to apply the well-known Krasnosel'ski\u{\i}'s fixed-point theorem or the classical index theory, see for instance \cite{coster,ladde,krasno,torres}, is a powerful tool to deduce the existence of positive solutions of the nonlinear problem. Such cones are constructed using constant sign properties of the Green's function and some of its derivatives. So, as much information we have about such constant signs, we may construct smaller cones, that allow us to ensure, both the positiveness of the solutions we are looking for, and some of its derivatives. Thus, for instance, we may prove the existence of positive and convex solutions if both the Green's function and its second partial derivative with respect to $t$ are nonnegative on their square of definition.
	
	The study of the sign of the Green's function has been made for many authors on the literature. Usually some sufficient conditions are obtained to deduce the constant sign of such function and some of its partial derivatives with respect to the first variable. Different kind of two-point boundary conditions have been considered and, usually, the differential linear operator is assumed to be disconjugate or disfocal on a suitable interval. We refer to the reader the classical works \cite{Butler-Erbe, Elias-2, Nehari, peterson-JMAA, peterson-RMJM, eloe, eloe-henderson} and the recent ones of Almenar and J\'odar \cite{almenar-jodar-BVP-1, almenar-jodar-BVP-2, almenar-jodar-math-methods, almenar-jodar-mathematical-modeling, almenar-jodar-mathematics} and references therein.

	To be concise, in this paper, we consider the following $n$-order linear differential equation 
	\begin{equation}\label{Ec::T_n[M]}
		T_n[M]\,u(t)\equiv u^{(n)}(t)+M\,u(t)=0\,,\ t\in I:=[a,b]\,,
	\end{equation}
	where $M\in\mathbb{R}$ a is real  parameter, coupled to the two-point boundary conditions
\begin{eqnarray}
	\label{Ec::cfa} u^{(\sigma_1)}(a)=& \cdots&= u^{(\sigma_k)}(a)\hspace{0.3cm}= 0, \\
	\label{Ec::cfb} u^{(\varepsilon_1)}(b)=& \cdots&= u^{(\varepsilon_{n-k})}(b)= 0,
\end{eqnarray}
with $k\in\{1,\dots, n-1\}$ and the sets of indices $\{\sigma_1,\dots, \sigma_k\}$, $\{\varepsilon_1,\dots, \varepsilon_{n-k}\}\subset\{0,\dots,n-1\}$, satisfying that
\[  0\le\sigma_1<\sigma_2<\cdots<\sigma_k\leq n-1\,,\quad  0\le \varepsilon_1<\varepsilon_2<\cdots<\varepsilon_{n-k}\leq n-1\,.\]  

This paper is devoted to study the set of parameters $M$ in which certain partial derivatives of the Green's function $g_{M}$, related to problem \eqref{Ec::T_n[M]}--\eqref{Ec::cfb}, are of constant sign without using the explicit expression of the Green's function. It should be noted that, in general, it is not possible to obtain the explicit expression of the Green's function, specially for non constant coefficients. Moreover, it is important to point out that the spectral characterization of the constant sign of the related Green's function to the general $n$-th order linear differential operator, with non constant coefficients, and coupled to the boundary conditions \eqref{Ec::cfa}--\eqref{Ec::cfb}, has been obtained in \cite{CabSaab}. Such existence results generalize the ones obtained by the same authors in \cite{CabSaa} for the $(k,n-k)$ boundary conditions. The main assumption on that references consists on the disconjugacy property of the considered operator for a given value of the parameter $\bar{M}$ and the, so-called, strongly inverse positive (negative) property of the Green's function. Under this assumption, it is obtained the exact interval of parameters $M$ for which the Green's function has constant sign. It is important to point out that such interval does not coincide with the interval of parameters in which the considered equation is disconjugate (see \cite{Disconju} for details). It must be noted that the arguments developed in references \cite{CabSaa, CabSaab} do not hold for the partial derivatives of the Green's function, because such derivatives are not the solution of the homogeneous linear equation. Is for this, that we must restrict our study to the operator $T_n[M]$.

For the particular case of equation \eqref{Ec::T_n[M]}, together with boundary conditions \eqref{Ec::cfa}--\eqref{Ec::cfb}, the interval of parameters for which the Green's function has constant sign has been characterized in \cite[Theorem 8.1]{CabSaab}. Also for boundary conditions of the type $(k,n-k)$, i.e., $\sigma_{j}=j-1$ for all $j=1,\dots,k$, and $\varepsilon_{j}=j-1$ for all $j=1,\dots,n-k$, which are a particular case of the previous ones, a characterization of the constant sign intervals of the Green's function is given in \cite{CabSaa}. Moreover, the exact expression of the eigenvalues that characterizes these intervals up to $n=8$ is obtained.

As we have previously mention, the main assumption is the disconjugacy character of the considered operator. In our case, it is very well known that the operator $T_{n}[M]$, is disconjugate for the value $M=0$ (see \cite{CabSaa, Coppel, elias}) at any arbitrary interval $[a,b]$. In the literature, sufficient conditions have been given to ensure the disconjugacy character of linear operators, we refer to \cite{Coppel,simons,elias,Zetel} and the references therein. A characterization of this property is proved in \cite{Disconju}, in which the optimal extremes of the intervals are given as the eigenvalues of suitable $(k,n-k)$ problems. 

It is in this context that the corresponding spectral theory arises:  we will prove that the constant sign interval is characterized by the first eigenvalues of the studied operator, related to suitable boundary conditions.

The study starts from conditions \eqref{Ec::cfa}--\eqref{Ec::cfb} and constructs the conditions satisfied by the derivatives of $g_{M}$, defining the necessary hypotheses that we need to obtain the main results, and deducing the relationships between the adjoint spaces of the respective related spaces. This way, we will obtain results that characterize the constant sign interval of the corresponding derivatives. 

The novelty of the work is that the results generalize \cite[Theorem 8.1]{CabSaab} for the particular operator $T_n[M]$ defined in \eqref{Ec::T_n[M]}. Also, as a consequence of these results, we will deduce  some cases in which the derivatives of the Green's function cannot be of constant sign. Furthermore, we will obtain a necessary condition that allow us to ensure that the corresponding derivatives are strictly positive (strictly negative) on the interior of the square of definition. We point out that, in general, the interval of parameters $M$ where the corresponding partial derivative has constant sign does not coincide with the interval of values for which the equation is disconjugate on the interval $I$.

The paper is organized as follows: in a preliminary Section $2$ we introduce the fundamental concepts, the results and the main hypotheses that are needed in the development of the article.  Section $3$ is devoted to prove the results that characterize the constant sign of the derivatives of Green's function by distinguishing three cases. The main results are proved through spectral theory. This section includes also a corollary that provides sufficient conditions for derivatives to be nonpositive or nonnegative, a necessary condition to ensure the negative (positive) sign of certain derivatives with respect to $t$ of $g_{M}$, and examples of application of our main results. Finally, Section $4$ includes an application to ensure the existence of positive solution of nonlinear problems.

\section{Preliminaries, hypotheses and main assumptions}
In this section, we will make a survey of several results and properties that will be used along the paper. The main part of these results are proven in \cite{CabSaab}, and many of them are given in \cite{CabSaa}.

Let us  consider the $n^{\rm th}$-order linear differential equation \eqref{Ec::T_n[M]} coupled to the two-point boundary conditions \eqref{Ec::cfa}--\eqref{Ec::cfb}.

\begin{definition}\label{D::Na}
	Let us say that $\{\sigma_1,\dots, \sigma_k\}-\{\varepsilon_1,\dots, \varepsilon_{n-k}\}$ satisfy property $(N_a)$ if
	\begin{equation*}
		\sum_{\sigma_j< h}1+\sum_{\varepsilon_j< h}1\geq h\,,\quad \forall h\in\{1,\dots, n-1\}\,.\end{equation*}	
\end{definition}

\begin{remark}
	\label{r-admissible}
We point out that condition $(N_a)$ is a classical condition assumed when studying the sign of the Green's function and its corresponding partial derivatives coupled to a linear differential operator and with boundary conditions \eqref{Ec::cfa}--\eqref{Ec::cfb}. See for instance \cite{Butler-Erbe}. In reference \cite{almenar-jodar-mathematics} the authors denote such property as {\it admissible conditions}.
\end{remark}

\begin{notation}\label{nota}
	
	Let us denote 
	\begin{align}
		&\label{Ec::alpha} \alpha=\min \left\lbrace i\in \{0,\dots,n-1\}:\;\; i\notin \{\sigma_{1},\dots,\sigma_{k}\} \right\rbrace\,,\\
		&\label{Ec::beta}  \beta =\min \left\lbrace i\in \{0,\dots,n-1\}:\;\; i\notin \{\varepsilon_{1},\dots,\varepsilon_{n-k}\} \right\rbrace \,.
	\end{align}
\end{notation}

We introduce the following set of functions related to the boundary conditions \eqref{Ec::cfa}--\eqref{Ec::cfb}:
\begin{equation*}	
	X_{\{\sigma_1,\dots, \sigma_k\}}^{\{\varepsilon_1,\dots, \varepsilon_{n-k}\}}=\left\lbrace u\in C^n(I)\ \mid\ u^{(\sigma_1)}(a)=\cdots=u^{(\sigma_k)}(a)=u^{(\varepsilon_1)}(b)=\cdots=u^{(\varepsilon_{n-k})}(b)=0\right\rbrace.
\end{equation*}
Also, we define the following sets of functions:
{\footnotesize \begin{equation*}
		\begin{split}
			X_{\{\sigma_1,\dots, \sigma_{k-1}|\alpha\}}^{\{\varepsilon_1,\dots, \varepsilon_{n-k}\}}&=\left\lbrace u\in C^n(I)\ \mid\ u^{(\sigma_1)}(a)=\cdots=u^{(\sigma_{k-1})}(a)=0\,,\ u^{(\alpha)}(a)=0\,, u^{(\varepsilon_1)}(b)=\cdots=u^{(\varepsilon_{n-k})}(b)=0\right\rbrace,\\
			X_{\{\sigma_1,\dots, \sigma_{k}|\alpha\}}^{\{\varepsilon_1,\dots, \varepsilon_{n-k-1}\}}&=\left\lbrace u\in C^n(I)\ \mid\ u^{(\sigma_1)}(a)=\cdots=u^{(\sigma_{k})}(a)=0\,,\ u^{(\alpha)}(a)=0\,, u^{(\varepsilon_1)}(b)=\cdots=u^{(\varepsilon_{n-k-1})}(b)=0\right\rbrace,\\
			X_{\{\sigma_1,\dots, \sigma_{k-1}\}}^{\{\varepsilon_1,\dots, \varepsilon_{n-k}|\beta\}}&=\left\lbrace u\in C^n(I)\ \mid\ u^{(\sigma_1)}(a)=\cdots=u^{(\sigma_{k-1})}(a)=0\,, u^{(\varepsilon_1)}(b)=\cdots=u^{(\varepsilon_{n-k})}(b)=0\,,\ u^{(\beta)}(b)=0\right\rbrace,\\
			X_{\{\sigma_1,\dots, \sigma_{k}\}}^{\{\varepsilon_1,\dots, \varepsilon_{n-k-1}|\beta\}}&=\left\lbrace u\in C^n(I)\ \mid\ u^{(\sigma_1)}(a)=\cdots=u^{(\sigma_{k})}(a)=0\,, u^{(\varepsilon_1)}(b)=\cdots=u^{(\varepsilon_{n-k-1})}(b)=0\,,\ u^{(\beta)}(b)=0\right\rbrace. \end{split} 
\end{equation*}}
\begin{definition}	
	Let us say that the operator $T_{n}[M]$ satisfies the property $(T_d)$ in $X_{\{\sigma_1,\dots,\sigma_k\}}^{\{\varepsilon_1,\dots,\varepsilon_{n-k}\}}$ if, and only if, there exists the following decomposition:
	\begin{equation}
		\label{Ec::Td1} 
		T_0\,u(t)=u(t)\,,\quad T_l\,u(t)=\dfrac{d}{dt}\left( \dfrac{T_{l-1}\,u(t)}{v_l(t)}\right) \,,\ l= 1,\dots, n\,,\; t\in I,
	\end{equation}
	where $v_l>0$ on $I$, $v_l\in C^{n}(I)$ such that
	\begin{equation*}
		T_{n}[M]\,u(t)=v_1(t)\,\dots\,v_n(t)\,T_n\,u(t)\,,\ t\in I\,,
	\end{equation*}
	and, moreover, such a decomposition satisfies, for every $u\in  X_{\{\sigma_1,\dots,\sigma_k\}}^{\{\varepsilon_1,\dots,\varepsilon_{n-k}\}}$:
	\begin{eqnarray}
		\nonumber T_{\sigma_1}\,u(a)\hspace{0.04cm}=&\cdots&=T_{\sigma_k}\,u(a)\hspace{0.3cm}=0\,,\\
		\nonumber T_{\varepsilon_1}\,u(b)=&\cdots&=T_{\varepsilon_{n-k}}\,u(b)=0\,.
	\end{eqnarray}
\end{definition}
\begin{remark}\label{recordNa}
	Note that decomposition \eqref{Ec::Td1} is not unique, it depends on the choice of $v_{k}$ for $k=1,\dots, n$. Throughout this work, for the operator $T_n[0]\,u(t)=u^{(n)}(t)$ we choose the following decomposition:
	\begin{equation*}
		T_0\,u(t)=u(t)\,,\quad T_k\,u(t)=\dfrac{d}{dt}\left( \dfrac{T_{k-1}\,u(t)}{v_k(t)}\right) \,,\ k = 1,\dots, n\,,\;\; t\in I,
	\end{equation*}
	where $v_k\equiv 1$ on $I$. That is, $T_{k}\,u(t)=u^{(k)}(t)$, $t\in I$. In particular:
	\begin{equation*}T_n[0]\,u(t)=v_1(t)\,\dots\,v_n(t)\,T_n\,u(t)\,,\ t\in I\,,\end{equation*}
	and, moreover, this decomposition satisfies, for every $u\in  X_{\{\sigma_1,\dots,\sigma_k\}}^{\{\varepsilon_1,\dots,\varepsilon_{n-k}\}}$:
	\begin{eqnarray*}
		\nonumber 
		T_{\sigma_1}\,u(a)=u^{(\sigma_1)}(a)=0\,,&\dots\,,& T_{\sigma_k}\,u(a)\hspace{0.3cm}=u^{(\sigma_k)}(a)\hspace{0.3cm}=0\,,\\
		\nonumber T_{\varepsilon_1}\,u(b)=u^{(\varepsilon_1)}(b)=0\,,&\dots\,,&T_{\varepsilon_{n-k}}\,u(b)=u^{(\varepsilon_{n-k})}(b)=0\,,
	\end{eqnarray*}
	or which is the same $T_{n}[0]$ satisfies property $(T_{d})$ in $X_{\{\sigma_1,\dots,\sigma_k\}}^{\{\varepsilon_1,\dots,\varepsilon_{n-k}\}}$.
\end{remark}
\begin{lemma}\cite[Lemma 3.8]{CabSaab}\label{LemaNa}
	Let $\bar{M}\in\mathbb{R}$ be such that $T_n[\bar{M}]$ satisfies the property  $(T_d)$ in $X_{\{\sigma_1,\dots,\sigma_k\}}^{\{\varepsilon_1,\dots,\varepsilon_{n-k}\}}$. Then  $\{\sigma_1,\dots,\sigma_k\}-\{\varepsilon_1,\dots,\varepsilon_{n-k}\}$ satisfy the property $(N_a)$ if, and only if, $M=0$ is not an eigenvalue of $T_n[\bar{M}]$ in $X_{\{\sigma_1,\dots,\sigma_k\}}^{\{\varepsilon_1,\dots,\varepsilon_{n-k}\}}$.
\end{lemma}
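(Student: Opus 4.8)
The plan is to translate the spectral statement into one about $\ker T_n[\bar M]$ and then to use $(T_d)$ to pass from the boundary conditions \eqref{Ec::cfa}--\eqref{Ec::cfb} to conditions on the quasi-derivatives $T_0u,\dots,T_{n-1}u$. Since $T_n[\bar M]\,u=v_1\cdots v_n\,T_nu$ with $v_1\cdots v_n>0$, a function $u$ solves $T_n[\bar M]\,u=0$ if and only if $T_nu=0$, and the latter equation has an $n$-dimensional solution space on which $u\mapsto(T_0u(a),\dots,T_{n-1}u(a))$ is a linear isomorphism onto $\R^n$ (well-posedness of the initial value problem in quasi-derivative coordinates). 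In these coordinates the conditions $T_{\sigma_1}u(a)=\cdots=T_{\sigma_k}u(a)=0$ simply annihilate the coordinates indexed by $\sigma_1,\dots,\sigma_k$, so they are independent and of codimension $k$; likewise the $b$-conditions are of codimension $n-k$. Because $T_{\sigma_j}u(a)$ depends only on the jet of $u$ at $a$, property $(T_d)$ gives, separately at each endpoint, that the original conditions imply the vanishing of the corresponding quasi-derivatives; as the original conditions also cut out subspaces of codimensions $k$ and $n-k$, these coincide with the quasi-derivative ones. Hence $u\in X_{\{\sigma_1,\dots,\sigma_k\}}^{\{\varepsilon_1,\dots,\varepsilon_{n-k}\}}\cap\ker T_n$ if and only if $T_{\sigma_j}u(a)=0$ and $T_{\varepsilon_j}u(b)=0$ for all $j$, and $M=0$ is an eigenvalue precisely when this intersection is nontrivial.

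To prove that $M=0$ not being an eigenvalue implies $(N_a)$, I argue by contraposition. Assume $(N_a)$ fails at some $h\in\{1,\dots,n-1\}$, i.e. $\#\{j:\sigma_j<h\}+\#\{j:\varepsilon_j<h\}\le h-1$. The key observation is that $T_hu\equiv0$ forces $T_iu\equiv0$ for all $i\ge h$, since $T_iu=(T_{i-1}u/v_i)'$; consequently, on the $h$-dimensional space $\ker T_h\subseteq\ker T_n$ every condition of order $\ge h$ holds automatically, and only the at most $h-1$ conditions of order $<h$ remain. These are fewer than the dimension $h$, so they admit a nontrivial common solution $u\in\ker T_h$, which then satisfies all the quasi-derivative conditions and, by the reduction above, belongs to $X_{\{\sigma_1,\dots,\sigma_k\}}^{\{\varepsilon_1,\dots,\varepsilon_{n-k}\}}$. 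Thus the intersection is nontrivial and $M=0$ is an eigenvalue.

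The converse, $(N_a)\Rightarrow M=0$ is not an eigenvalue, is where I expect the main difficulty. The factorization $(T_d)$ is a P\'olya--Mammana factorization with positive factors, so $T_n[\bar M]$ is disconjugate on $I$ and no nontrivial $u\in\ker T_n$ can have an excessive number of zeros. Suppose, for contradiction, that such a $u$ satisfies all the boundary conditions, and let $r$ be the largest index with $T_ru\not\equiv0$; then $T_{r+1}u\equiv0$ yields $T_ru=c\,v_{r+1}$ with $c\ne0$, so $T_ru$ is sign-definite and zero-free. This rules out any condition of order $r$ and makes the conditions of order $>r$ automatic, so the binding conditions have order $<r$ and, by $(N_a)$ at $h=r$, number at least $r$ (the case $r=0$ being impossible, as $(N_a)$ at $h=1$ would require an order-$0$ condition incompatible with $T_0u=u$ being zero-free). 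Applying the generalized Rolle property of the factorization---between two zeros of $T_{l-1}u$ the function $T_lu=(T_{l-1}u/v_l)'$ vanishes---and descending from the zero-free $T_ru$, one bounds the number of zeros of each $T_lu$; matching this against the zeros the boundary conditions impose at $a$ and $b$ must violate the disconjugacy bound. Carrying out this count with the correct endpoint and multiplicity bookkeeping, so that it genuinely clashes with the at-least-$r$ conditions furnished by $(N_a)$, is the delicate step; equivalently, it amounts to showing that under $(N_a)$ the $n$ boundary functionals are linearly independent on $\ker T_n$, i.e. that the associated quasi-Wronskian determinant---controlled by the total positivity of disconjugate systems---does not vanish.
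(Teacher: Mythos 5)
The paper gives no proof of this lemma at all: it is imported verbatim from \cite[Lemma 3.8]{CabSaab}, so your proposal can only be judged on its own merits, not against an in-paper argument. Its first half holds up: the identification of the conditions $u^{(\sigma_j)}(a)=0$, $u^{(\varepsilon_j)}(b)=0$ with the quasi-derivative conditions $T_{\sigma_j}u(a)=0$, $T_{\varepsilon_j}u(b)=0$ (the jet argument plus equality of codimensions is a valid way to localize property $(T_d)$ at each endpoint), and the contrapositive direction is complete and correct: if $(N_a)$ fails at $h$, the at most $h-1$ surviving functionals on the $h$-dimensional space $\ker T_h\subseteq\ker T_n$ have a nontrivial common zero, which is an eigenfunction of $T_n[\bar M]$ associated to $M=0$.

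The implication $(N_a)\Rightarrow$ ``$M=0$ is not an eigenvalue'' is, however, a genuine gap, and the hint you leave for closing it would not work as stated. Let $z_l$ be the number of distinct zeros of $T_lu$ in $I$ and $p_l\in\{0,1,2\}$ the number of imposed endpoint conditions of order $l$. The generalized Rolle step gives $z_{l+1}\ge z_l-1+p_{l+1}$ (Rolle zeros are interior, endpoint zeros are not, so they never collide), which telescopes to $z_r\ge \sum_{l=0}^{r-1}p_l-r$, using $p_r=0$. Feeding in $(N_a)$ at $h=r$, i.e. $\sum_{l<r}p_l\ge r$, yields only $z_r\ge 0$: no contradiction with the zero-freeness of $T_ru=c\,v_{r+1}$, however the bookkeeping is arranged. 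This insufficiency is not an artifact of the method: for $n=3$ with conditions $u(a)=u''(a)=u''(b)=0$ and $\bar M=0$, the function $u(t)=t-a$ has $r=1$, $T_1u$ zero-free, no condition of order $1$, and exactly $r=1$ conditions of order $<r$, yet it \emph{is} an eigenfunction --- precisely because $(N_a)$ fails at $h=2$, not at $h=1$. What closes the argument is the observation that no boundary condition can have order exactly $r$ (any such condition contradicts zero-freeness at once, as you note), whence $\sum_{l<r}p_l=\sum_{l<r+1}p_l\ge r+1$ by $(N_a)$ at $h=r+1$ when $r\le n-2$, and by the trivial count of all $n$ conditions when $r=n-1$. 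Then $z_r\ge (r+1)-r=1$, the desired contradiction. So the delicate step is resolved not by finer zero-counting or total-positivity machinery, but by shifting the index at which $(N_a)$ is invoked from $h=r$ to $h=r+1$; as written, your proof of this direction is incomplete.
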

Let us denote by $G_{M}$ the Green's matrix associated with the equivalent $n$-dimensional first order system associated to \eqref{Ec::T_n[M]}--\eqref{Ec::cfb}, which is given by the expression
\begin{equation}\label{Ec:MG} G_{M}(t,s)=\left( \begin{array}{ccccc}
		g_1(t,s)&g_2(t,s)&\cdots&g_{n-1}(t,s)&g_M(t,s)\\&&&&\\
		\dfrac{\partial }{\partial t}\,g_1(t,s)& \dfrac{\partial }{\partial t}\,g_2(t,s)&\cdots&\dfrac{\partial }{\partial t}\,g_{n-1}(t,s)& \dfrac{\partial }{\partial t}\,g_M(t,s)\\
		\vdots&\vdots&\cdots&\vdots&\vdots\\
		\dfrac{\partial^{n-1} }{\partial t^{n-1}}\,g_1(t,s)&\dfrac{\partial^{n-1} }{\partial t^{n-1}}\,g_2(t,s)&\cdots&\dfrac{\partial^{n-1} }{\partial t^{n-1}}\,g_{n-1}(t,s)&\dfrac{\partial^{n-1}} {\partial t^{n-1}}\,g_M(t,s)\end{array} \right) ,\end{equation}
where $g_M$ is the scalar Green's function related to operator $T_{n}[M]$ in $X_{\{\sigma_1,\dots,\sigma_k\}}^{\{\varepsilon_1,\dots,\varepsilon_{n-k}\}}$ (see \cite[Section 1.4]{system} for details). Moreover, by the axiomatic definition of $G_{M}$, we have that for all $s\in(a,b)$, the following equality holds
\begin{equation} \label{equation} \dfrac{\partial }{\partial t}\, G_{M}(t,s)=A\,G_{M}(t,s),\quad \text{for all } t\in I\backslash \left\lbrace s\right\rbrace,
\end{equation}
where $A=\left(
\begin{array}{c|c}
	0 & I_{n-1} \\
	\hline
	-M& 0
\end{array}\right)$, with $I_{n-1}$ the $(n-1)$ dimensional identity matrix, denotes the matrix that defines the $n$-dimensional system equivalent to our problem \eqref{Ec::T_n[M]}--\eqref{Ec::cfb} and 
\begin{equation}\label{relacion}
	B\,G_{M}(a,s)+C\,G_{M}(b,s)=0,\;\;\text{for all}\;\; s\in(a,b),
\end{equation}
with $ B,\,\ C\in \mathcal{M}_{n\times n}$, defined by $b_{j,1+\sigma_j}=1$ for $j=1,\dots,k$ and $c_{j+k,1+\varepsilon_j}=1$ for ${j=1,\dots,n-k}$; otherwise, $b_{ij}=0$ and $c_{ij}=0$.

In particular, it is easy to see, using the expressions \eqref{Ec:MG} and \eqref{equation} (see \cite[Section 1.4]{system} for details), that $g_M\in C^{n-2}(I \times I)$. Moreover, it is a $C^n$ function on the triangles $a\le s < t \le b$ and $a\le t < s \le b$, it satisfies, as a function of $t$ for any $s\in (a,b)$ fixed, the two-point boundary value conditions \eqref{Ec::cfa}-\eqref{Ec::cfb} and solves equation \eqref{Ec::T_n[M]} for all $t\in I\setminus \{s\}$.

Moreover, for any $t\in (a,b)$ it satisfies 
\begin{equation}\label{e-salto}
	\dfrac{\partial^{n-1}}{\partial t^{n-1}}\,g_{M}(t^{+},t)=\dfrac{\partial^{n-1}}{\partial t^{n-1}}\,g_{M}(t^{-},t)+1.
\end{equation}

Studying the matrix Green's function $G_{M}$, it is proved in \cite[page 13]{CabSaa} that, for this particular case, $g_{n-j}(t,s)$ may be expressed as a function of $g_M(t,s)$ for all $j=1,\dots,n-1$ as follows:\begin{equation}
	\label{Ec::gj} g_{n-j}(t,s)=(-1)^j\dfrac{\partial^j}{\partial s^j}\,g_M(t,s)\,.
\end{equation}

The adjoint operator of $T_{n}[M]$ follows the expression 
\begin{equation}\label{EC::Ad}
	T_n^*[M]v (t)\equiv (-1)^n \,v^{(n)}(t)+M\,v(t)\,,
\end{equation}
defined on the domain 
\begin{equation*}
	\footnotesize
	D(T_n^*[M])=\left\lbrace v\in C^n(I)\ \mid \sum_{i=0}^{n-1} (-1)^{n-1-i}\,v^{(n-1-i)}(b)\,u^{(i)}(b)=\sum_{i=0}^{n-1} (-1)^{n-1-i}\,v^{(n-1-i)}(a)\,u^{(i)}(a)\, \right\rbrace,
\end{equation*}
for all $u\in D(T_n[M])$.

We denote by $g_M^*(t,s)$  the Green's function related to operator $T_{n}^*[M]$. Furthermore, the following equality is satisfied (see \cite[section 1.4]{system})
\begin{equation}\label{gg2}
	g^*_M(t,s)=g_M(s,t),\,\;\; (t,s)\in I\times I.
\end{equation}

In such a case, if we define the following operator 
\begin{equation}\label{Ec::Tg}
	\widehat{T}_{n}[(-1)^n\,M]:=(-1)^n\, T_{n}^*[M] \,,
\end{equation}
we have, according to the above equality, that
\begin{equation}\label{Ec::gg1}
	\widehat{g}_{(-1)^n\,M}(t,s)=(-1)^n\,g_M^*(t,s)=(-1)^n\,g_{M}(s,t)\,,
\end{equation}
where $\widehat g_{(-1)^n M}(t,s)$ is the scalar Green's function related to operator $\widehat T_{n}[(-1)^nM]$ in $D\left( T_{n}^*[M]\right)$. 

As a particular case of the obtained results in \cite[section 4]{CabSaab}, we can deduce that $$D(T_{n}^*[M])\equiv X_{\ \, \{\sigma_1,\dots,\sigma_k\}}^{*\{\varepsilon_1,\dots,\varepsilon_{n-k}\}}=X_{ \{\tau_1,\dots,\tau_{n-k}\}}^{\{\delta_1,\dots,\delta_{k}\}},$$ 
being $\{\delta_1,\dots,\delta_k\}\,,\{\tau_1,\dots,\tau_{n-k}\}\subset\{0,\dots,n-1\}$, such that $\delta_i<\delta_{i+1}$ and $\tau_j<\tau_{j+1}$, for $i=1,\dots,k-1$ and $j=1,\dots,n-k-1$, satisfying:
\[\begin{split}	\{\sigma_1,\dots,\sigma_k,n-1-\tau_1,\dots,\,n-1-\tau_{n-k}\}=&\{0,\dots,n-1\},\\
	\{\varepsilon_1,\dots,\varepsilon_{n-k},n-1-\delta_1,\dots,\,n-1-\delta_k\}=&\{0,\dots,n-1\}.
\end{split}\]
\begin{notation}
	Let us define $\eta$, $\gamma$ in the following way:
	\begin{align}
		&\label{Ec::eta} \eta=\min \left\lbrace i\in \{0,\dots,n-1\}:\;\; i\notin \{\tau_{1},\dots,\tau_{n-k}\} \right\rbrace\,,\\
		&\label{Ec::gamma} \gamma=\min \left\lbrace i\in \{0,\dots,n-1\}:\;\; i\notin \{\delta_{1},\dots,\delta_{k}\} \right\rbrace\,.
	\end{align}
\end{notation}
Analogously, let us denote by $ \widehat G(t,s)$ the Green's matrix associated with the equivalent $n$-dimensional problem of $\widehat{T}_n[(-1)^n\,M]\,v(t)=0$, $v\in X_{\{\tau_1,\dots,\tau_{n-k}\}}^{\{\delta_1,\dots,\delta_{k}\}}$ given by
{\small\begin{equation}\label{Ec:MGh} \widehat G(t,s)=\left( \begin{array}{llll}
			\widehat g_1(t,s)&\cdots&\widehat g_{n-1}(t,s)&\widehat g_{(-1)^n\,M}(t,s)\\&&&\\
			\dfrac{\partial }{\partial t}\,\widehat g_1(t,s)& \cdots&\dfrac{\partial }{\partial t}\,\widehat g_{n-1}(t,s)& \dfrac{\partial }{\partial t}\,\widehat g_{(-1)^n\,M}(t,s)\\
			\qquad	\vdots&\cdots&\qquad\vdots&\qquad\vdots\\
			\dfrac{\partial^{n-1} }{\partial t^{n-1}}\,\widehat g_1(t,s)&\cdots&\dfrac{\partial^{n-1} }{\partial t^{n-1}}\,\widehat g_{n-1}(t,s)&\dfrac{\partial^{n-1}} {\partial t^{n-1}}\,\widehat g_{(-1)^n\,M}(t,s)\end{array} \right),\end{equation}}
where $\widehat g_{(-1)^n\,M}(t,s)$ is the scalar Green's function related to operator $\widehat{T}_n[(-1)^n\,M]$ in $X_{\{\tau_1,\dots,\tau_{n-k}\}}^{\{\delta_1,\dots,\delta_{k}\}}$.

Similarly to \eqref{Ec::gj} we can deduce that 
\begin{equation}
	\label{Ec::gjh} \widehat g_{n-j}(t,s)=(-1)^j\dfrac{\partial^j}{\partial s^j}\,\widehat g_{(-1)^nM}(t,s)\,.
\end{equation}

By definition,  $\widehat G(t,s)$ satisfies the equality
\begin{equation}\label{relacion2}
	\widehat B\,\widehat G(a,s)+\widehat C\,\widehat G(b,s)=0,\;\;\text{for all}\;\; s\in (a,b),
\end{equation}
where $\widehat B$, $\widehat C\in \mathcal{M}_{n\times n}$, defined as $ (\widehat B)_{i,\,\tau_i+1}=1$ for $i=1,\dots,n-k$ and $ (\widehat C)_{j+n-k,\,1+\delta_{j}}=1$ for $j=1,\dots,k$; otherwise, $( \widehat B)_{i\,j}=0$ and $( \widehat C)_{i\,j}=0$.
\begin{remark}\label{ref1}
	From the previous definitions, as it is pointed out in \cite[Remarks 4.1 and 4.2]{CabSaab} we have that $\alpha=n-1-\tau_{n-k}$, $\beta=n-1-\delta_k$, $\eta=n-1-\sigma_k$ and $\gamma=n-1-\varepsilon_{n-k}$.
\end{remark}
\begin{example}\label{ejemp}
	Let us consider operator $T_{4}[M]$ coupled with the boundary conditions $$u(0)=u'(0)=u''(0)=u''(1)=0.$$ 
	In this case, $\{\sigma_1,\sigma_2,\sigma_3\}=\{0,1,2\}, \{\varepsilon_1\}=\{2\}, \{\tau_1\}=\{0\}$ and $\{\delta_1,\delta_2,\delta_3 \}=\{0,2,3\}$. Thus, we deduce that $X_{\ \, \{0,1,2\}}^{*\{2\}}=X_{ \{0\}}^{\{0,2,3\}}.$ Moreover, $\alpha=3\,,\beta=0,\,\eta=1$ and $\gamma=1$.
\end{example}
\begin{definition}\label{d-IP}
	Operator $T_{n}[M]$ is said to be inverse positive (negative) on $X_{\{\sigma_1,\dots,\sigma_k\}}^{\{\varepsilon_1,\dots,\varepsilon_{n-k}\}}$ if every $u \in X_{\{\sigma_1,\dots,\sigma_k\}}^{\{\varepsilon_1,\dots,\varepsilon_{n-k}\}}$ such that $T_n[M]\, u \ge 0$ on $I$, satisfies $u\geq 0$ ($u\leq 0$) on $I$.
\end{definition}
The above definition is equivalent to the constant sign of Green's function $g_{M}$ related to $T_{n}[M]$ in $X_{\{\sigma_1,\dots,\sigma_k\}}^{\{\varepsilon_1,\dots,\varepsilon_{n-k}\}}$ as it is shown in the following result whose proof is analogous the one made  in \cite[Theorem 2.9]{CabSaab}.

\begin{theorem}\label{T::in1}
	Operator $T_{n}[M]$ is inverse positive (negative) in $X_{\{\sigma_1,\dots,\sigma_k\}}^{\{\varepsilon_1,\dots,\varepsilon_{n-k}\}}$ if, and only if, the Green's function related to problem \eqref{Ec::T_n[M]}--\eqref{Ec::cfb} is non-negative (non-positive) on its square of definition.
\end{theorem}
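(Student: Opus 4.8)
The plan is to prove both implications by exploiting the standard integral representation furnished by the Green's function, which converts the abstract inverse-positivity property into a pointwise sign statement about $g_M$. Recall that for the linear problem $T_n[M]\,u = h$ on $I$ coupled with the homogeneous conditions \eqref{Ec::cfa}--\eqref{Ec::cfb}, whenever $M$ is not an eigenvalue, the unique solution admits the representation
\begin{equation*}
	u(t)=\int_a^b g_M(t,s)\,h(s)\,ds\,,\qquad t\in I\,,
\end{equation*}
and this $u$ belongs to $X_{\{\sigma_1,\dots,\sigma_k\}}^{\{\varepsilon_1,\dots,\varepsilon_{n-k}\}}$. Since Definition~\ref{d-IP} characterizes inverse positivity through the sign of $u$ under the hypothesis $h=T_n[M]\,u\ge 0$, the entire argument reduces to comparing the sign of $u$ with the sign of the kernel $g_M$ in the above formula.

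For the direction asserting that nonnegativity of $g_M$ implies inverse positivity, I would argue directly: if $g_M(t,s)\ge 0$ on $I\times I$ and $h\ge 0$ on $I$, then the integrand $g_M(t,s)\,h(s)$ is nonnegative for every fixed $t$, so $u(t)\ge 0$ for all $t\in I$; this is exactly the inverse-positivity conclusion of Definition~\ref{d-IP}. The inverse-negative case is identical after reversing the inequality on $g_M$. This half is essentially immediate once the representation formula is in hand.

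For the converse—inverse positivity forces $g_M\ge 0$—I would proceed by contradiction using a localization argument. Suppose $g_M(t_0,s_0)<0$ at some interior point $(t_0,s_0)$. Since $g_M$ is continuous on the triangles $a\le s<t\le b$ and $a\le t<s\le b$ (as recalled after \eqref{relacion}), there is a small neighborhood of $s_0$ on which $g_M(t_0,\cdot)$ stays strictly negative; choosing a nonnegative bump function $h$ supported there produces, via the integral representation, a solution $u$ with $u(t_0)<0$ while $T_n[M]\,u=h\ge 0$, contradicting inverse positivity. The only delicate point is that $g_M$ may fail to be continuous across the diagonal $t=s$, so the test point must be taken off the diagonal and the approximating bump must avoid the jump region described in \eqref{e-salto}; a standard density/continuity argument then extends the conclusion to the closed square.

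The main obstacle I anticipate is making the contradiction argument fully rigorous near the diagonal and at the boundary: one must ensure that the chosen forcing term $h$ is admissible (so that the resulting $u$ genuinely lies in the space and the representation applies) and that the sign information survives the limiting process used to cover the diagonal and the edges of the square of definition. Everything else follows formally from the representation formula and Definition~\ref{d-IP}; indeed, since the statement explicitly says the proof is analogous to \cite[Theorem 2.9]{CabSaab}, I would lean on that reference for the technical handling of the jump condition \eqref{e-salto} and the uniqueness of the solution, and concentrate the new writing on verifying that the hypotheses of that theorem transfer to the present operator $T_n[M]$ and boundary conditions.
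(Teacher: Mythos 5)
Your proposal is correct and is essentially the same argument the paper relies on (the paper gives no proof of its own here, deferring to the analogous \cite[Theorem 2.9]{CabSaab}): the integral representation gives one implication immediately, and a localized nonnegative bump forcing gives the converse by contradiction. The only inaccuracy is harmless: since $g_M\in C^{n-2}(I\times I)$ and $n\ge 2$ in this setting, the Green's function itself is continuous across the diagonal --- the jump \eqref{e-salto} affects only $\frac{\partial^{n-1}}{\partial t^{n-1}}\,g_M$ --- so the special care you anticipate near the diagonal is not actually needed.
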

\begin{definition}\label{d-SIP}
	Operator $T_{n}[M]$ is said to be strongly inverse positive (negative) in $X_{\{\sigma_1,\dots,\sigma_k\}}^{\{\varepsilon_1,\dots,\varepsilon_{n-k}\}}$ if every  $u \in X_{\{\sigma_1,\dots,\sigma_k\}}^{\{\varepsilon_1,\dots,\varepsilon_{n-k}\}}$ such that $T_{n}[M]\, u \gneqq 0$ on $I$, satisfies that $u> 0$ ($<0$)  on $(a,b)$ and, moreover, $u^{(\alpha)}(a)>0$ $(<0)$ and $(-1)^{\beta}u^{(\beta)}(b)>0$ $(<0)$, where $\alpha$ and $\beta$ are defined in \eqref{Ec::alpha} and \eqref{Ec::beta}, respectively.
\end{definition}
Analogously to Theorem \ref{T::in1}, the following characterization can be shown:
\begin{theorem}\label{T::in2}
	Operator $T_{n}[M]$ is strongly inverse positive (negative)  in $X_{\{\sigma_1,\dots,\sigma_k\}}^{\{\varepsilon_1,\dots,\varepsilon_{n-k}\}}$ if, and only if, the Green's function related to problem \eqref{Ec::T_n[M]}--\eqref{Ec::cfb} satisfies the following properties:
	\begin{itemize}
		\item $g_{M}(t,s)>0\,(<0)$  a.e. on $(a,b)$.
		\item $\frac{\partial^\alpha}{\partial t^\alpha}\,g_{M}(t,s)>0\,(<0)$  for a.e. $s\in(a,b)$. 
		\item  $(-1)^{\beta}\frac{\partial^\beta}{\partial t^\beta}\,g_{M}(t,s)>0\,(<0)$ for a.e. $s\in(a,b)$.
	\end{itemize}
\end{theorem}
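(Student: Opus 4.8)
The plan is to rest everything on the integral representation that defines the Green's function: for every $u\in X_{\{\sigma_1,\dots,\sigma_k\}}^{\{\varepsilon_1,\dots,\varepsilon_{n-k}\}}$, writing $h=T_n[M]\,u$, one has
\[
u(t)=\int_a^b g_M(t,s)\,h(s)\,ds,\qquad t\in I .
\]
Since $g_M$ is $C^{n}$ on each of the closed triangles $a\le t\le s\le b$ and $a\le s\le t\le b$ while $g_M\in C^{n-2}(I\times I)$, one may differentiate under the integral sign and evaluate at the endpoints, where for $s\in(a,b)$ the point $(a,s)$ lies in the triangle $t\le s$ and $(b,s)$ in the triangle $s\le t$; this yields
\[
u^{(\alpha)}(a)=\int_a^b \tfrac{\partial^\alpha}{\partial t^\alpha}g_M(a,s)\,h(s)\,ds,\qquad u^{(\beta)}(b)=\int_a^b \tfrac{\partial^\beta}{\partial t^\beta}g_M(b,s)\,h(s)\,ds .
\]
Here $\alpha$ and $\beta$, defined in \eqref{Ec::alpha}--\eqref{Ec::beta}, are exactly the lowest orders of differentiation at $a$ and at $b$ that are not forced to vanish by \eqref{Ec::cfa}--\eqref{Ec::cfb}, which is why precisely these two derivatives appear both in Definition \ref{d-SIP} and in the statement.

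For the sufficiency direction I would argue directly. Assuming the three sign conditions, take any $u$ in the space with $h=T_n[M]\,u\gneqq 0$. For each fixed $t\in(a,b)$ the first condition gives $g_M(t,s)>0$ for a.e. $s$, so the integrand $g_M(t,s)\,h(s)$ is nonnegative and strictly positive on a set of positive measure, whence $u(t)>0$ on $(a,b)$. Applying the same reasoning to the two boundary formulas, using $\tfrac{\partial^\alpha}{\partial t^\alpha}g_M(a,s)>0$ and $(-1)^\beta\tfrac{\partial^\beta}{\partial t^\beta}g_M(b,s)>0$ for a.e. $s$, delivers $u^{(\alpha)}(a)>0$ and $(-1)^\beta u^{(\beta)}(b)>0$. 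This is precisely strong inverse positivity; the negative case follows after a sign change.

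For the necessity direction I would recover the kernel from the operator by testing against approximate identities. First, strong inverse positivity implies inverse positivity, so Theorem \ref{T::in1} already gives $g_M\ge0$, and the analogue of that argument applied to the two boundary formulas gives $\tfrac{\partial^\alpha}{\partial t^\alpha}g_M(a,\cdot)\ge0$ and $(-1)^\beta\tfrac{\partial^\beta}{\partial t^\beta}g_M(b,\cdot)\ge0$ a.e. To upgrade these weak inequalities to strict ones, fix a continuity point $s_0\in(a,b)$, choose nonnegative $h_j\gneqq0$ of unit mass concentrating at $s_0$, and set $u_j(t)=\int_a^b g_M(t,s)\,h_j(s)\,ds$; strong inverse positivity forces $u_j>0$ on $(a,b)$, $u_j^{(\alpha)}(a)>0$ and $(-1)^\beta u_j^{(\beta)}(b)>0$, and letting $j\to\infty$ recovers the values $g_M(t,s_0)$, $\tfrac{\partial^\alpha}{\partial t^\alpha}g_M(a,s_0)$ and $(-1)^\beta\tfrac{\partial^\beta}{\partial t^\beta}g_M(b,s_0)$, each therefore $\ge0$. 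Strictness is then imposed by the differential structure: for fixed $s_0$ the map $t\mapsto g_M(t,s_0)$ solves $T_n[M]\,w=0$ on each of $(a,s_0)$ and $(s_0,b)$, so a nonnegative such solution vanishing at an interior point, or with the relevant boundary derivative vanishing, would—by uniqueness for the linear equation together with the boundary conditions \eqref{Ec::cfa}--\eqref{Ec::cfb}—be forced to vanish identically, contradicting the unit jump \eqref{e-salto}.

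I expect the main obstacle to be exactly this final strictness step: the passage from the weak inequalities, obtained cleanly from the approximate-identity limit, to the strict ones in the statement. The delicate part is ruling out interior or boundary zeros of $g_M(\cdot,s_0)$ using only the homogeneous equation away from the diagonal, the jump \eqref{e-salto}, and the boundary conditions, since the limiting argument alone returns only ``$\ge$''. By contrast, the justification of differentiation under the integral sign and of evaluating the $t$-derivatives at the endpoints is routine once one works on the closed triangles where $g_M$ is $C^n$, though it should be recorded explicitly.
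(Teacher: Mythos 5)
First, a point of comparison: the paper never actually proves this theorem --- it is stated as something that ``can be shown'' analogously to Theorem \ref{T::in1}, whose own proof is deferred to \cite[Theorem 2.9]{CabSaab}. So your attempt is filling a gap the paper leaves to the literature, and its skeleton (the integral representation $u(t)=\int_a^b g_M(t,s)h(s)\,ds$, the identification of $\alpha,\beta$ as the lowest non-forced derivatives, approximate identities for necessity) is the natural one. The boundary parts of sufficiency are sound as written, because the second and third bullets are a.e.\ statements in $s$ at the \emph{fixed} values $t=a$ and $t=b$. However, your step ``for each fixed $t\in(a,b)$ the first condition gives $g_M(t,s)>0$ for a.e.\ $s$'' is an unjustified strengthening of the hypothesis: the first bullet is an almost-everywhere statement on the square, and Fubini then gives an a.e.\ positive slice $s\mapsto g_M(t,s)$ only for \emph{almost every} $t$, not every $t$ (the continuous function $|t-c|$ is positive a.e.\ on the square but vanishes identically on the slice $t=c$). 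Since Definition \ref{d-SIP} demands $u(t)>0$ at \emph{every} interior $t$, you must exclude slices $g_M(t_0,\cdot)$ vanishing on a set of positive measure; that needs the ODE structure --- by \eqref{gg2} the slice solves the adjoint equation away from $s=t_0$, hence is analytic there with isolated zeros unless it vanishes identically on one of the two components --- plus a further argument ruling out that degenerate alternative, which your text does not supply.

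The more serious gap is exactly the strictness step of necessity that you flag as the crux: your mechanism is wrong. A nonnegative solution of $T_n[M]\,w=0$ with an interior zero acquires only the \emph{two} conditions $w(t_0)=w'(t_0)=0$, not $n$ of them, so no uniqueness theorem forces $w\equiv 0$ when $n\ge 3$: for instance $w(t)=(t-t_0)^2$ solves $w'''=0$, is nonnegative, and vanishes at $t_0$. Likewise, the conditions \eqref{Ec::cfa} together with $w^{(\alpha)}(a)=0$ give only $k+1<n$ conditions at $a$. So the contradiction with the jump \eqref{e-salto} never materializes, and in fact you are aiming at more than the theorem asserts: pointwise positivity at every $t$ and every continuity point $s_0$ is stronger than the stated ``a.e.'' conclusion, which is phrased that way precisely because isolated degeneracies cannot be excluded by this kind of local argument. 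The workable repair is measure-theoretic rather than pointwise: for fixed $s$, a slice $g_M(\cdot,s)\not\equiv 0$ on a component has only finitely many zeros there (zeros of a nontrivial solution of a linear ODE cannot accumulate); slices vanishing identically on a component correspond, via the representation of $g_M$ on each triangle as $\sum_i b_i(s)u_i(t)$ in a fundamental system with coefficients $b_i$ analytic in $s$, to common zeros of the analytic vector $b$, hence form a discrete set unless $g_M$ vanishes on an entire triangle --- and that last possibility is excluded by strong inverse positivity itself, by testing against $h\gneqq 0$ supported on the appropriate side, since it would produce $u$ vanishing on a subinterval. Fubini then yields the first bullet, and the same analyticity dichotomy (identically zero versus isolated zeros, with the identically-zero case killed by $u^{(\alpha)}(a)>0$, resp. $(-1)^\beta u^{(\beta)}(b)>0$) yields the second and third. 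Your approximate-identity derivation of the weak inequalities is correct and is the right starting point; it is the conversion to ``strict a.e.'' that must be rebuilt along these lines.
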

Let $u\in X_{\{\sigma_1,\dots,\sigma_k\}}^{\{\varepsilon_1,\dots,\varepsilon_{n-k}\}}$ be such that $T_{n}[M]\,u(t)=0$, $t\in I$. Then, differentiating equation \eqref{Ec::T_n[M]} with respect to $t$, as a direct consequence of equations \eqref{Ec:MG} and \eqref{equation} we deduce that, for any $s\in (a,b)$ given, the $q$-th derivative $v_{s}^{q}[M](t):=\dfrac{\partial^{q} }{\partial t^{q}}\,g_{M}(t,s)$,  with $1\le q\le n-1$,  satisfies the equation $T_{n}[M]\,v_{s}^{q}[M](t)=0$, for all $t\in I\setminus \{s\}$, together with the new boundary conditions 
\begin{eqnarray}
	\label{condicion1}
	(v_{s}^{q})^{(\mu_{1}^{q})}[M](a)=&\cdots&=(v_{s}^{q})^{(\mu_{k}^{q})}[M](a)\hspace{0.3cm}=0\,,\\
	\label{condicion2}
	(v_{s}^{q})^{(\rho_{1}^{q})}[M](b)=&\cdots&=(v_{s}^{q})^{(\rho_{n-k}^{q})}[M](b)=0\,.
\end{eqnarray}
Here, the sets of indices $\{\mu_{1}^{q},\dots, \mu_{k}^{q}\},\; \{\rho_{1}^{q},\dots, \rho_{n-k}^{q}\}\subset\{0,\dots,n-1\}$, satisfy that 
\[ 0\leq \mu_{1}^{q}<\mu_{2}^{q}<\cdots<\mu_{k}^{q}\leq n-1\,,\quad  0\leq \rho_{1}^{q}<\rho_{2}^{q}<\cdots<\rho_{n-k}^{q}\leq n-1\,,\] and are defined in the following way:
\begin{itemize}
	\item If $A_{q}=\{i\in\{1,\dots,k\}\,/\;\; \sigma_{i}\ge q\}\neq \varnothing$, then   \[\mu_{1}^{q}=\sigma_{j}-q,\,\mu_{2}^{q}=\sigma_{j+1}-q,\,\dots,\mu_{k-(j-1)}^{q}=\sigma_{k}-q,\,\mu_{k-(j-2)}^{q}=\sigma_{1}-q+n ,\dots,\,\mu_{k}^{q}=\sigma_{j-1}-q+n\,,\]
	with \begin{equation}\label{j} 
		j=\min A_q\geq 1. 
	\end{equation}
	\item If $A_{q}=\varnothing$, then 
	\[\mu_{i}^{q}=\sigma_{i}-q+n,\;\; i=1,\ldots,k.\]
	\item If $B_{q}=\{i\in\{1,\dots,n-k\}\,/\;\; \varepsilon_{i}\ge q\}\neq \varnothing$, then 
	\begin{equation*}
		\begin{aligned}
			\rho_{1}^{q}&=\varepsilon_{r}-q,\,\rho_{2}^{q}=\varepsilon_{r+1}-q,\,\dots,\rho_{n-k-(r-1)}^{q}=\varepsilon_{n-k}-q,\,\rho_{n-k-(r-2)}^{q}=\varepsilon_{1}-q+n ,\dots,\\
			\rho_{n-k}^{q}&=\varepsilon_{r-1}-q+n,
		\end{aligned}
	\end{equation*}
	with 
	\begin{equation}\label{r}
		r=\min B_q\geq 1.
	\end{equation}
	\item If $B_{q}=\varnothing$, then 
	\[\rho_{i}^{q}=\varepsilon_{i}-q+n,\;\; i=1,\ldots,n-k.\] 
\end{itemize}
\begin{remark}
	The above process of calculating the boundary conditions of $v_{s}^{q}[M]$ is valid for both $M\neq 0$ and $M=0$.
\end{remark}
Taking into account previous computations, we may define $\phi$ as the function which maps the space $X_{\{\sigma_1,\dots,\sigma_k\}}^{\{\varepsilon_1,\dots,\varepsilon_{n-k}\}}$ into the space of the boundary conditions that satisfies the first derivative, that is, $X_{\{\mu_{1}^{1},\dots,\mu_{k}^{1}\}}^{\{\rho_{1}^{1},\dots,\rho_{n-k}^{1}\}}$.
This way, we can extend this definition to the $q$-th derivative as $$\phi^{q}=\phi\circ \overset{q)}{\cdots}\circ \phi:X_{\{\sigma_1,\dots,\sigma_k\}}^{\{\varepsilon_1,\dots,\varepsilon_{n-k}\}}\longrightarrow X_{\{\mu_{1}^{q},\dots,\mu_{k}^{q}\}}^{\{\rho_{1}^{q},\dots,\rho_{n-k}^{q}\}}.$$
In particular, $\phi^{n}$ is the identity function.
\begin{example}\label{example1}
	Consider the operator $T_{5}[M]$ defined in $X_{\{0,2,4\}}^{\{0,2\}}$. Applying the above definition of the spaces of the derivatives we obtain the following sequence of spaces:
	\[ \begin{tikzcd}
		X_{\{0,2,4\}}^{\{0,2\}} \arrow{r}{\phi} &  X_{\{1,3,4\}}^{\{1,4\}} \arrow{r}{\phi}  &  X_{\{0,2,3\}}^{\{0,3\}}\arrow{r}{\phi}  & X_{\{1,2,4\}}^{\{2,4\}} \arrow{r}{\phi} & X_{\{0,1,3\}}^{\{1,3\}} \arrow{r}{\phi}& X_{\{0,2,4\}}^{\{0,2\}}.
	\end{tikzcd}
	\]
\end{example}
Analogously to the Notation \ref{nota}, let us define $\alpha^{q}$, $\beta^{q}$ as follows:
\begin{align}
	&\label{alphaq}\alpha^{q}=\min \left\lbrace i\in \{0,\dots,n-1\}:\;\; i\notin \{\mu_{1}^{q},\dots,\mu_{k}^{q}\} \right\rbrace\,,\\
	&\label{betaq}\beta^{q}=\min \left\lbrace i\in \{0,\dots,n-1\}:\;\; i\notin \{\rho_{1}^{q},\dots,\rho_{n-k}^{q}\} \right\rbrace\,.
\end{align}
\begin{remark}
	It is important to point out that for any $q\ge 1$, the function $v_{s}^{q}[M]$ is not the Green's function related to the operator $T_{n}[M]$ in $X_{\{\mu_{1}^{q},\dots,\mu_{k}^{q}\}}^{\{\rho_{1}^{q},\dots,\rho_{n-k}^{q}\}}$. This is due to the fact that $v_{s}^{q}[M]\in C^{n-2-q}(I\times I)$.
	
\end{remark}
\begin{remark}\label{lemma::1}
	The operator $T_{n}[0]$ satisfies the property $(T_d)$ in  $X_{\{\mu_{1}^{q},\dots,\mu_{k}^{q}\}}^{\{\rho_{1}^{q},\dots,\rho_{n-k}^{q}\}}$. From Remark~\ref{recordNa}, we know that operator $T_n[0]\,u(t)=u^{(n)}(t)$ satisfies the following decomposition:
	\begin{equation*}
		T_0\,u(t)=u(t)\,,\quad T_k\,u(t)=\dfrac{d}{dt}\left( \dfrac{T_{k-1}\,u(t)}{v_k(t)}\right) \,,\ k = 1,\dots, n\,,
	\end{equation*}
	where $v_k\equiv 1$ on $I$ and
	\begin{equation*}T_n[0]\,u(t)=v_1(t)\,\dots\,v_n(t)\,T_n\,u(t)\,,\ t\in I\,,\end{equation*}
	that is, $T_{k}\,u(t)=u^{(k)}(t)$.
	
	Moreover, this decomposition satisfies, for every $u\in  X_{\{\mu_{1}^{q},\dots,\mu_{k}^{q}\}}^{\{\rho_{1}^{q},\dots,\rho_{n-k}^{q}\}}$:
	\begin{eqnarray*}
		\nonumber T_{\mu_{1}^{q}}\,u(a)=u^{(\mu_{1}^{q})}(a)=0\,,&\dots\,,& T_{\mu_k^{q}}\,u(a)\hspace{0.3cm}=u^{(\mu_k^{q})}(a)\hspace{0.3cm}=0\,,\\
		\nonumber T_{\rho_1^{q}}\,u(b)=u^{(\rho_1^{q})}(b)=0\,,&\dots\,,&T_{\rho_{n-k}^{q}}\,u(b)=u^{(\rho_{n-k}^{q})}(b)=0\,.
	\end{eqnarray*} 
\end{remark}
We consider the general expression of the solution of equation $$T_{n}[0]\,u(t)+\lambda\,u(t)\equiv  u^{(n)}(t)+ \lambda\, u(t)=0,\;\; t\in I,$$ to obtain the different eigenvalues.

Next we present a result concerning the spectrum of the spaces $X_{\{\mu_{1}^{q},\dots,\mu_{k}^{q}\}}^{\{\rho_{1}^{q},\dots,\rho_{n-k}^{q}\}}$.
\begin{lemma}\label{spectrum}
	$\bar \lambda\neq 0$ is an eigenvalue of $T_{n}[0]$ in $X_{\{\sigma_1,\dots,\sigma_k\}}^{\{\varepsilon_1,\dots,\varepsilon_{n-k}\}}$ if and only if $\bar \lambda$ is an eigenvalue of $T_{n}[0]$ in  $X_{\{\mu_{1}^{q},\dots,\mu_{k}^{q}\}}^{\{\rho_{1}^{q},\dots,\rho_{n-k}^{q}\}}$ for all $q\in \{1,\dots,n-1\}$.
\end{lemma}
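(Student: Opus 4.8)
The plan is to show that, for a fixed $\bar\lambda\neq 0$, differentiation sets up a linear isomorphism between the $\bar\lambda$-eigenspaces of the consecutive spaces in the chain, so that these eigenspaces all have the same dimension and, in particular, are simultaneously trivial or nontrivial. I would first fix notation: let $V_{\bar\lambda}=\{u\in C^{n}(I):u^{(n)}+\bar\lambda\,u=0\}$ be the $n$-dimensional solution space of $u^{(n)}+\bar\lambda u=0$, and for $q\in\{0,\dots,n-1\}$ let $E^{(q)}\subset V_{\bar\lambda}$ be the subspace of those $u$ that satisfy the boundary conditions defining $X_{\{\mu_{1}^{q},\dots,\mu_{k}^{q}\}}^{\{\rho_{1}^{q},\dots,\rho_{n-k}^{q}\}}$. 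Since $\mu^{0}=\sigma$ and $\rho^{0}=\varepsilon$ (taking $q=0$ gives $A_0=\{1,\dots,k\}$, $j=\min A_0=1$), $E^{(0)}$ is exactly the $\bar\lambda$-eigenspace in $X_{\{\sigma_1,\dots,\sigma_k\}}^{\{\varepsilon_1,\dots,\varepsilon_{n-k}\}}$; by definition $\bar\lambda$ is an eigenvalue in a given space precisely when the corresponding $E^{(q)}$ is nontrivial.

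The crucial observation, and the only place the hypothesis $\bar\lambda\neq 0$ enters, is that the differentiation operator $D=\tfrac{d}{dt}$ is invertible on $V_{\bar\lambda}$: it maps $V_{\bar\lambda}$ into itself, and since $D^{n}=-\bar\lambda\,\mathrm{Id}$ on $V_{\bar\lambda}$ its inverse is $-\bar\lambda^{-1}D^{n-1}$. Hence $D^{q}:V_{\bar\lambda}\to V_{\bar\lambda}$ is a bijection for every $q$. I would then verify that $D^{q}$ carries $E^{(0)}$ exactly onto $E^{(q)}$. This is the computational heart of the argument: given $u\in V_{\bar\lambda}$ and $w=u^{(q)}$, I translate each condition $u^{(\sigma_i)}(a)=0$ into one on $w$. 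If $\sigma_i\ge q$, then $u^{(\sigma_i)}=w^{(\sigma_i-q)}$, producing the index $\sigma_i-q$; if $\sigma_i<q$, then using $u^{(n)}=-\bar\lambda u$ one gets $w^{(\sigma_i-q+n)}=u^{(\sigma_i+n)}=-\bar\lambda\,u^{(\sigma_i)}$, so that $u^{(\sigma_i)}(a)=0\iff w^{(\sigma_i-q+n)}(a)=0$ precisely because $\bar\lambda\neq 0$, producing the index $\sigma_i-q+n$. These indices are exactly the cyclic shift by $-q$ modulo $n$ that defines $\{\mu_{1}^{q},\dots,\mu_{k}^{q}\}$ (the reordering being handled by $j=\min A_q$), and the identical computation at $t=b$ with $\varepsilon$ and $r=\min B_q$ yields the conditions defining $\{\rho_{1}^{q},\dots,\rho_{n-k}^{q}\}$. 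Thus $u\in E^{(0)}$ if and only if $u^{(q)}\in E^{(q)}$, i.e. $D^{q}(E^{(0)})=E^{(q)}$.

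Combining the two points, $D^{q}$ restricts to a linear isomorphism $E^{(0)}\to E^{(q)}$, whence $\dim E^{(0)}=\dim E^{(q)}$ for every $q\in\{1,\dots,n-1\}$. Consequently $E^{(0)}\neq\{0\}$ if and only if $E^{(q)}\neq\{0\}$ for all $q$, which is precisely the asserted equivalence. I expect the main obstacle to be the index bookkeeping in the second paragraph: one must check that differentiation reproduces, with the correct ordering, the cyclic-shift definition of the $\mu_i^q$ and $\rho_j^q$ given before the lemma, and keep track of the fact that the ``wrap-around'' of the lowest-order conditions to the top is legitimate only because $\bar\lambda\neq 0$ permits replacing $u^{(\sigma_i)}$ by a nonzero multiple of $u^{(\sigma_i+n)}$. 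Everything else is routine: the indices never leave $\{0,\dots,n-1\}$ since the shift is taken modulo $n$, and the fact that $\phi^{n}$ is the identity guarantees the chain closes up after $n$ steps, consistently with $D^{n}=-\bar\lambda\,\mathrm{Id}$.
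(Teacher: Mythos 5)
Your proposal is correct and takes essentially the same route as the paper's proof: both rest on the observation that differentiation maps solutions of $u^{(n)}+\bar\lambda\,u=0$ to solutions and transforms the boundary conditions $\{\sigma_i\},\{\varepsilon_i\}$ exactly into $\{\mu_i^{q}\},\{\rho_i^{q}\}$, with $\bar\lambda\neq 0$ ruling out degeneracy. The only difference is packaging: the paper checks that the successive derivatives $u^{(j)}$ of an eigenfunction are nontrivial eigenfunctions in the derived spaces and obtains the converse from the cyclicity $\mu_i^{n}=\sigma_i$, $\rho_l^{n}=\varepsilon_l$, whereas you upgrade the same map to a linear isomorphism $D^{q}\colon E^{(0)}\to E^{(q)}$ (with inverse built from $-\bar\lambda^{-1}D^{n-1}$), which yields the slightly stronger conclusion that the $\bar\lambda$-eigenspaces all have equal dimension.
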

\begin{proof}
	Suppose that $\bar \lambda\neq 0$ is an eigenvalue of $T_{n}[0]$ in $X_{\{\sigma_1,\dots,\sigma_k\}}^{\{\varepsilon_1,\dots,\varepsilon_{n-k}\}}$. So, there is $u\not\equiv 0$ in $I$  such that $u^{(n)}(t)+\bar \lambda\,u(t)=0$, $t\in I$ and $u\in X_{\{\sigma_1,\dots,\sigma_k\}}^{\{\varepsilon_1,\dots,\varepsilon_{n-k}\}}$. Since $u\not\equiv 0$ in $I$ we have that $u^{(n)}\not\equiv 0$ in $I$. Therefore, $u^{(j)}\not\equiv 0$ in $I$ for all $j\in \{1,\dots,n-1\}$.
	
	Taking into account that $T_{n}[0]\,u^{(j)}(t)=u^{(j+n)}(t)=-\bar \lambda\,u^{(j)}(t)$ for all $t\in I$ and $j\in \N$, we have that $u^{(j)}$ is an eigenvector related to the eigenvalue $\bar \lambda$ such that $u^{(j)}\in X_{\{\mu_{1}^{j},\dots,\mu_{k}^{j}\}}^{\{\rho_{1}^{j},\dots,\rho_{n-k}^{j}\}}$ for all $j\in\{1,\dots,n-1\}$. 
	
	For $j=n$, we have that $\mu_{i}^{n}=\sigma_{i}$ for all $i\in\{1,\dots,k\}$ and $\rho_{l}^{n}=\varepsilon_{l}$ for all $l\in \{1,\dots,n-k\}$. Therefore, the result holds.
\end{proof}
\begin{remark}
	As a direct consequence of Remark \ref{recordNa}, we have, from Lemma \ref{LemaNa}, that ${\{\sigma_1,\dots,\sigma_k\}}-{\{\varepsilon_1,\dots,\varepsilon_{n-k}\}}$ satisfies condition $(N_a)$ if and only if $M=0$ is not an eigenvalue of $u^{(n)}$ in $X_{\{\sigma_1,\dots,\sigma_k\}}^{\{\varepsilon_1,\dots,\varepsilon_{n-k}\}}$.
\end{remark}
\begin{remark}
	Note that if $\lambda=0$ is not an eigenvalue of $T_n[0]$ in $X_{\{\mu_{1}^{q},\dots,\mu_{k}^{q}\}}^{\{\rho_{1}^{q},\dots,\rho_{n-k}^{q}\}}$, then the spaces $X_{\{\sigma_1,\dots,\sigma_k\}}^{\{\varepsilon_1,\dots,\varepsilon_{n-k}\}}$ and $X_{\{\mu_{1}^{q},\dots,\mu_{k}^{q}\}}^{\{\rho_{1}^{q},\dots,\rho_{n-k}^{q}\}}$ have the same spectrum. However, it is very important to point out that $\lambda=0$ can appear as an eigenvalue in some space $X_{\{\mu_{1}^{q},\dots,\mu_{k}^{q}\}}^{\{\rho_{1}^{q},\dots,\rho_{n-k}^{q}\}}$, as it is shown in the following example. 
\end{remark}
\begin{example}
	Let us consider again the Example \ref{example1}. As we know, the following sequence of spaces is fulfilled:
	\[\begin{tikzcd}
		X_{\{0,2,4\}}^{\{0,2\}}\arrow{r}{\phi}& X_{\{1,3,4\}}^{\{1,4\}}\arrow{r}{\phi} &X_{\{0,2,3\}}^{\{0,3\}}\arrow{r}{\phi} &X_{\{1,2,4\}}^{\{2,4\}}\arrow{r}{\phi}& X_{\{0,1,3\}}^{\{1,3\}}\arrow{r}{\phi}& X_{\{0,2,4\}}^{\{0,2\}}.
	\end{tikzcd}\]
	
	In this case, we have that $\lambda=0$ is not an eigenvalue of $T_{5}[0]$ in $X_{\{0,2,4\}}^{\{0,2\}}$, $X_{\{0,2,3\}}^{\{0,3\}}$ and $X_{\{0,1,3\}}^{\{1,3\}}$, while $\lambda=0$ is an eigenvalue of $T_{5}[0]$ in $X_{\{1,3,4\}}^{\{1,4\}}$ and $X_{\{1,2,4\}}^{\{2,4\}}$.
\end{example}
Now, we will show how to determine the adjoint space of  $X_{\{\mu_{1}^{q},\dots,\mu_{k}^{q}\}}^{\{\rho_{1}^{q},\dots,\rho_{n-k}^{q}\}}$ from the adjoint space of $X_{\{\sigma_1,\dots,\sigma_k\}}^{\{\varepsilon_1,\dots,\varepsilon_{n-k}\}}$. We arrive to the next result.
\begin{theorem}
	The adjoint space of $X_{\{\mu_{1}^{q},\dots,\mu_{k}^{q}\}}^{\{\rho_{1}^{q},\dots,\rho_{n-k}^{q}\}}$ is given by $X_{ \{\tau_{1}^{q},\dots,\tau_{n-k}^{q}\}}^{\{\delta_{1}^{q},\dots,\delta_{k}^{q}\}}$, where the indices $\{\tau_{1}^{q},\dots,\tau_{n-k}^{q}\}\,, \{\delta_{1}^{q},\dots,\delta_{k}^{q}\}\subset\{0,\dots,n-1\}$ satisfy
	\[ 0\leq \tau_{1}^{q}<\tau_{2}^{q}<\cdots<\tau_{n-k}^{q}\leq n-1\,,\quad  0\leq \delta_{1}^{q}<\delta_{2}^{q}<\cdots<\delta_{k}^{q}\leq n-1\,,\]
	and are defined as follows:
	\begin{itemize}
		\item If $C_{q}=\{i\in\{1,\dots,n-k\}\,/\;\; \tau_{i}+q\leq n-1\}\neq \varnothing$, then   
		\begin{equation*}
			\begin{aligned} \tau_{1}^{q}&=\tau_{l+1}+q-n,\,\tau_{2}^{q}=\tau_{l+2}+q-n,\,\dots,\tau_{n-k-l}^{q}=\tau_{n-k}+q-n,\,\tau_{n-k-l+1}^{q}=\tau_{1}+q ,\dots,\,\\
				\tau_{n-k}^{q}&=\tau_{l}+q, 
			\end{aligned}
		\end{equation*}
		with 
		\begin{equation}\label{l}
			l=\max C_q.
		\end{equation}
		\item If $C_{q}=\varnothing$, then 
		\[\tau_{i}^{q}=\tau_{i}+q-n,\;\; i=1,\ldots,n-k.\]
		\item If $D_{q}=\{i\in\{1,\dots,k\}\,/\;\; \delta_{i}+q\leq n-1\}\neq \varnothing$, then 
		\begin{equation*}
			\delta_{1}^{q}=\delta_{p+1}+q-n,\,\delta_{2}^{q}=\delta_{p+2}+q-n,\,\dots,\delta_{k-p}^{q}=\delta_{k}+q-n,\,\delta_{k-p+1}^{q}=\delta_{1}+q ,\dots,\delta_{k}^{q}=\delta_{p}+q, 
		\end{equation*}
		with 
		\begin{equation}\label{p}
			p=\max D_q.
		\end{equation}
		\item If $D_{q}=\varnothing$, then 
		\[\delta_{i}^{q}=\delta_{i}+q-n,\;\; i=1,\ldots,k.\] 
	\end{itemize}
	As a consequence, the following diagram
	\begin{equation}\label{diagrama}
		\begin{tikzcd}
			X_{\{\sigma_1,\dots,\sigma_k\}}^{\{\varepsilon_1,\dots,\varepsilon_{n-k}\}} \arrow{r}{\phi^{q}} \arrow{d}{*} & X_{\{\mu_{1}^{q},\dots,\mu_{k}^{q}\}}^{\{\rho_{1}^{q},\dots,\rho_{n-k}^{q}\}}  \arrow{d}{*}\\
			X_{\{\tau_{1},\dots,\tau_{n-k}\}}^{\{\delta_{1},\dots,\delta_{k}\}} & X_{\{\tau_{1}^{q},\dots,\tau_{n-k}^{q}\}}^{\{\delta_{1}^{q},\dots,\delta_{k}^{q}\}}  \arrow{l}{\phi^{q}}
		\end{tikzcd}
	\end{equation}
	is commutative.
\end{theorem}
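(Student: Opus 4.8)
The plan is to recast both the derivative passage $\phi^{q}$ and the passage to the adjoint as elementary bijections of the index set $\{0,\dots,n-1\}$, after which the whole statement collapses to a short computation modulo $n$. For $S\subseteq\{0,\dots,n-1\}$ and $q\in\{1,\dots,n-1\}$ write $S\ominus q=\{(s-q)\bmod n:\,s\in S\}$, $S\oplus q=\{(s+q)\bmod n:\,s\in S\}$, $S^{c}=\{0,\dots,n-1\}\setminus S$ and $R(S)=\{n-1-s:\,s\in S\}$, each understood as listed in increasing order. First I would observe that the piecewise definition of $\{\mu_{1}^{q},\dots,\mu_{k}^{q}\}$ says nothing more than $\{\mu_{i}^{q}\}=\{\sigma_{i}\}\ominus q$: the indices with $\sigma_{i}\ge q$ contribute $\sigma_{i}-q\in\{0,\dots,n-1-q\}$ and those with $\sigma_{i}<q$ contribute $\sigma_{i}-q+n\in\{n-q,\dots,n-1\}$, so sorting the two blocks reproduces exactly the list split at $j=\min A_{q}$ (the case $A_{q}=\varnothing$ being the degenerate one in which every index is shifted by $n$). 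The same reading gives $\{\rho_{i}^{q}\}=\{\varepsilon_{i}\}\ominus q$. Symmetrically, the displayed formulas for $\tau_{i}^{q},\delta_{i}^{q}$ are precisely $\{\tau_{i}^{q}\}=\{\tau_{i}\}\oplus q$ and $\{\delta_{i}^{q}\}=\{\delta_{i}\}\oplus q$, the split at $l=\max C_{q}$ (resp. $p=\max D_{q}$) being the point where $\tau_{i}+q$ (resp. $\delta_{i}+q$) first exceeds $n-1$. Finally, the adjoint relations recalled in the preliminaries, namely $\{n-1-\tau_{j}\}=\{\sigma_{i}\}^{c}$ and $\{n-1-\delta_{j}\}=\{\varepsilon_{i}\}^{c}$, read $\{\tau_{j}\}=R(\{\sigma_{i}\}^{c})$ and $\{\delta_{j}\}=R(\{\varepsilon_{i}\}^{c})$.

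With these three dictionaries in hand, the proof rests on two commutation identities modulo $n$, both immediate from the fact that $x\mapsto(x-q)\bmod n$ is a bijection of $\{0,\dots,n-1\}$: first $(S\ominus q)^{c}=S^{c}\ominus q$ (a bijection carries complements to complements), and second $R(S\ominus q)=R(S)\oplus q$, which follows from $n-1-\big((s-q)\bmod n\big)=(n-1-s+q)\bmod n$. Applying the adjoint rule $R\circ{}^{c}$ to the space $X_{\{\mu_{1}^{q},\dots,\mu_{k}^{q}\}}^{\{\rho_{1}^{q},\dots,\rho_{n-k}^{q}\}}=X_{\{\sigma_{i}\}\ominus q}^{\{\varepsilon_{i}\}\ominus q}$, its lower index set is $R\big((\{\sigma_{i}\}\ominus q)^{c}\big)=R\big(\{\sigma_{i}\}^{c}\ominus q\big)=R(\{\sigma_{i}\}^{c})\oplus q=\{\tau_{j}\}\oplus q$, and likewise its upper index set is $R\big((\{\varepsilon_{i}\}\ominus q)^{c}\big)=\{\delta_{j}\}\oplus q$. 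By the first paragraph these are exactly $\{\tau_{j}^{q}\}$ and $\{\delta_{j}^{q}\}$, which proves that the adjoint of $X_{\{\mu_{1}^{q},\dots,\mu_{k}^{q}\}}^{\{\rho_{1}^{q},\dots,\rho_{n-k}^{q}\}}$ is $X_{\{\tau_{1}^{q},\dots,\tau_{n-k}^{q}\}}^{\{\delta_{1}^{q},\dots,\delta_{k}^{q}\}}$, as claimed.

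For the commutativity of \eqref{diagrama} it then suffices to identify the bottom-left vertex along both routes. The left arrow sends $X_{\{\sigma_{1},\dots,\sigma_{k}\}}^{\{\varepsilon_{1},\dots,\varepsilon_{n-k}\}}$ to its adjoint $X_{\{\tau_{1},\dots,\tau_{n-k}\}}^{\{\delta_{1},\dots,\delta_{k}\}}$. Along the other route, $\phi^{q}$ acts on $X_{\{\tau_{1}^{q},\dots,\tau_{n-k}^{q}\}}^{\{\delta_{1}^{q},\dots,\delta_{k}^{q}\}}=X_{\{\tau_{j}\}\oplus q}^{\{\delta_{j}\}\oplus q}$ by $\ominus q$ on each index set (first paragraph), giving $X_{(\{\tau_{j}\}\oplus q)\ominus q}^{(\{\delta_{j}\}\oplus q)\ominus q}=X_{\{\tau_{1},\dots,\tau_{n-k}\}}^{\{\delta_{1},\dots,\delta_{k}\}}$, since $\oplus q$ and $\ominus q$ are mutually inverse modulo $n$. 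The two vertices coincide, so the square commutes.

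I expect the only delicate point to be the bookkeeping of the first paragraph: checking that the $\min$/$\max$-indexed piecewise lists genuinely encode ``$\pm q$ modulo $n$, then re-sort'', and in particular keeping straight the interchange of the cardinalities $k$ and $n-k$ and of the lower/upper roles under the adjoint. Once that dictionary is established, the two modular identities of the second paragraph reduce everything to a one-line verification.
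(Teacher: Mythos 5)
Your proof is correct and follows essentially the same route as the paper's: both read $\phi^{q}$ as the shift $x\mapsto(x-q)\bmod n$ acting on index sets and push it through the complement--reflection characterization of the adjoint indices, your identities $(S\ominus q)^{c}=S^{c}\ominus q$ and $R(S\ominus q)=R(S)\oplus q$ being exactly the paper's explicit case analysis ($C_{q}\neq\varnothing$ versus $C_{q}=\varnothing$, with the reordering of the lists) written in compact modular notation. The only difference is presentational: you also spell out the commutativity of the square via the same dictionary, which the paper records as an immediate consequence.
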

\begin{proof}
	We do the proof only for $\tau_{i}^{q}$. For $\delta_{i}^{q}$, the proof is done in a similar way. To prove it, we start from the definition of $\tau_{i}$, we have that $\tau_{i}<\tau_{i+1}$ for all $i\in \{1,\dots,n-k-1\}$ and 
	\[\{\sigma_1,\dots,\sigma_k,n-1-\tau_{n-k},\dots,\,n-1-\tau_{1}\}\equiv\{0,\dots,n-1\}.\]
	Taking into account the definition of $X_{\{\mu_{1}^{q},\dots,\mu_{k}^{q}\}}^{\{\rho_{1}^{q},\dots,\rho_{n-k}^{q}\}}$ we have that 
	\begin{equation*}
		\begin{tikzcd}
			\{\sigma_1,\dots,\sigma_k,n-1-\tau_{n-k},\dots,\,n-1-\tau_{1}\} \arrow{r}{\phi^q} & \{\mu_1^q,\dots,\mu_k^{q},n-1-\tau_{n-k}^q,\dots,\,n-1-\tau_{1}^q\},
		\end{tikzcd}
	\end{equation*}
	with $\{\mu_1^q,\dots,\mu_k^{q},n-1-\tau_{n-k}^q,\dots,\,n-1-\tau_{1}^q\}\equiv\{0,\dots,n-1\}$ where $\tau_{i}^{q}$ has to fulfill that 
	\[ 0\leq \tau_{1}^{q}<\tau_{2}^{q}<\cdots<\tau_{n-k}^{q}\leq n-1\,.\]
	To calculate them, we distinguish two cases:
	\begin{itemize}
		\item[1)] If $C_{q}\neq \varnothing$, then:
		\begin{itemize}
			\item If $n-1-\tau_{i}\geq q$ (i.e, $ i\le l$), $n-1-\tau_{i}$ becomes $n-1-\tau_{i}-q$ by applying $\phi^q$.
			\item If $n-1-\tau_{i}<q$ (i.e, $ i>l$),  $n-1-\tau_{i}$ becomes $n-1-\tau_{i}-q+n$ by applying $\phi^q$.
		\end{itemize}
		Reordering the elements, we obtain that:
		\begin{equation*}
			\begin{aligned}
				0&\leq n-1-\tau_{l}-q<n-1-\tau_{l-1}-q<\cdots<n-1-\tau_{1}-q\\
				&<n-1-\tau_{n-k}-q+n<\cdots<n-1-\tau_{l+1}-q+n\leq n-1,
			\end{aligned}
		\end{equation*}
		or, equivalently,
		\[ 0\leq \tau_{l+1}+q-n<\tau_{l+2}+q-n<\cdots<\tau_{n-k}+q-n<\tau_{1}+q<\cdots<\tau_{l}+q\leq n-1\,.\]
		Taking 
		\[\begin{aligned}
			\tau_{1}^{q}&=\tau_{l+1}+q-n,\,\tau_{2}^{q}=\tau_{l+2}+q-n,\,\dots,\tau_{n-k-l}^{q}=\tau_{n-k}+q-n,\,\tau_{n-k-l+1}^{q}=\tau_{1}+q ,\dots,\,\\
			\tau_{n-k}^{q}&=\tau_{l}+q\,,\end{aligned}\]
		the first statement yields.
		
		\item[2)] If $C_{q}=\varnothing$, then $n-1-\tau_{i}$ becomes $n-1-\tau_{i}-q+n$ by applying $\phi^{q}$ and we obtain that 
		\[ 0\leq n-1-\tau_{n-k}-q+n<\cdots<n-1-\tau_{2}-q+n<n-1-\tau_{1}-q+n\leq n-1\,,\]
		and, reordering, we infer that 
		\[ 0\leq \tau_{1}+q-n<\tau_{2}+q-n<\cdots<\tau_{n-k}+q-n\le n-1\,.\]
		Taking $\tau_{i}^{q}=\tau_{i}+q-n,\;\; i=1,\ldots,n-k$, the second statement yields.
	\end{itemize}
	Therefore, $X_{\ \, \{\mu_1^q,\dots,\mu_k^q\}}^{*\{\rho_1^q,\dots,\rho_{n-k}^q\}}=X_{\{\tau_{1}^{q},\dots,\tau_{n-k}^{q}\}}^{\{\delta_{1}^{q},\dots,\delta_{k}^{q}\}}$.
\end{proof}
\begin{remark}
	Note that, unlike the direct arrows between the leading spaces, the arrows between the adjoint spaces are in the inverse direction.
\end{remark}
\begin{example}
	Consider the operator $T_{4}[M]$ coupled with the boundary conditions 
	\[u(a)=u''(a)=u'(b)=u''(b).\]
	In this case, $u\in X_{\{0,2\}}^{\{1,2\}}$ and $X_{\ \, \{0,2\}}^{*\{1,2\}}=X_{\{0,2\}}^{\{0,3\}}$. Using the diagram \eqref{diagrama}, we get that
	\[ \begin{tikzcd}
		X_{\{0,2\}}^{\{1,2\}} \arrow{r}{\phi} \arrow[swap]{d}{*} & X_{\{1,3\}}^{\{0,1\}} \arrow{r}{\phi} \arrow{d}{*} &  X_{\{0,2\}}^{\{0,3\}} \arrow{r}{\phi} \arrow{d}{*} & X_{\{1,3\}}^{\{2,3\}} \arrow{d}{*} \\%
		X_{\{0,2\}}^{\{0,3\}} & X_{\{1,3\}}^{\{0,1\}}  \arrow{l}{\phi} &  X_{\{0,2\}}^{\{1,2\}} \arrow{l}{\phi}& X_{\{1,3\}}^{\{2,3\}} \arrow{l}{\phi}
	\end{tikzcd}
	\]
\end{example}

Using the fact that the adjoint operators have the same spectrum as the original one, from Lemma \ref{spectrum}, we arrive at the following result.

\begin{lemma}
	$ \lambda^*\neq 0$ is an eigenvalue of $T_{n}[0]$ in $X_{\{\tau_{1},\dots,\tau_{n-k}\}}^{\{\delta_{1},\dots,\delta_{k}\}}$ if and only if $\lambda^*$ is an eigenvalue of $T_{n}[0]$ in  $X_{\{\tau_{1}^q,\dots,\tau_{n-k}^q\}}^{\{\delta_{1}^q,\dots,\delta_{k}^q\}}$ for all $q\in\{0,\dots,n-1\}$. Moreover, for each fixed $q\in\{0,\dots,n-1\}$, $ \bar \lambda\in \R$ is an eigenvalue of $T_{n}[0]$ in $X_{\{\mu_{1}^{q},\dots,\mu_{k}^{q}\}}^{\{\rho_{1}^{q},\dots,\rho_{n-k}^{q}\}}$  if and only if $\bar \lambda$ is an eigenvalue of $T_{n}[0]$ in $X_{\{\tau_{1}^q,\dots,\tau_{n-k}^q\}}^{\{\delta_{1}^q,\dots,\delta_{k}^q\}}$. 
\end{lemma}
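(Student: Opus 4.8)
The plan is to treat the ``Moreover'' assertion (the fixed-$q$ statement) as the core of the lemma and then obtain the first assertion by composing it with Lemma~\ref{spectrum}. The only genuinely new ingredient beyond Lemma~\ref{spectrum} is the spectral correspondence between an operator and its adjoint, which is why the theorem identifying adjoint spaces is invoked immediately before.

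First I would fix $q\in\{0,\dots,n-1\}$ and use the theorem proved immediately above, which identifies $X_{\{\tau_{1}^{q},\dots,\tau_{n-k}^{q}\}}^{\{\delta_{1}^{q},\dots,\delta_{k}^{q}\}}$ as the adjoint space of $X_{\{\mu_{1}^{q},\dots,\mu_{k}^{q}\}}^{\{\rho_{1}^{q},\dots,\rho_{n-k}^{q}\}}$. A value $\bar\lambda$ is an eigenvalue of $T_{n}[0]$ in $X_{\{\mu^{q}\}}^{\{\rho^{q}\}}$ exactly when the operator $T_{n}[\bar\lambda]=T_{n}[0]+\bar\lambda\,\mathrm{Id}$ has nontrivial kernel on that space, i.e.\ when its Green's function fails to exist. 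By the Fredholm alternative for these two-point problems this occurs if and only if the adjoint operator $T_{n}^{*}[\bar\lambda]$ has nontrivial kernel on the adjoint domain $X_{\{\tau^{q}\}}^{\{\delta^{q}\}}$. I would then rewrite $T_{n}^{*}[\bar\lambda]$ in terms of $\widehat T_{n}=(-1)^{n}T_{n}^{*}$ via \eqref{Ec::Tg}, and translate the singularity condition into one for $\widehat g$ using the Green's-function identity \eqref{Ec::gg1}; since $\widehat T_{n}[\cdot]$ coincides with $T_{n}[\cdot]$, this says precisely that the relevant parameter is an eigenvalue of $T_{n}[0]$ in $X_{\{\tau^{q}\}}^{\{\delta^{q}\}}$. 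This yields the ``iff'' of the fixed-$q$ statement.

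Next I would deduce the first assertion by chaining equivalences. Taking $q=0$, so that $X_{\{\mu^{0}\}}^{\{\rho^{0}\}}=X_{\{\sigma\}}^{\{\varepsilon\}}$ and $X_{\{\tau^{0}\}}^{\{\delta^{0}\}}=X_{\{\tau\}}^{\{\delta\}}$, the fixed-$q$ statement gives that $\lambda^{*}\neq0$ is an eigenvalue of $T_{n}[0]$ in $X_{\{\tau\}}^{\{\delta\}}$ if and only if it is one in $X_{\{\sigma\}}^{\{\varepsilon\}}$. By Lemma~\ref{spectrum} the latter is equivalent to $\lambda^{*}$ being an eigenvalue of $T_{n}[0]$ in $X_{\{\mu^{q}\}}^{\{\rho^{q}\}}$ for \emph{every} $q\in\{1,\dots,n-1\}$, and applying the fixed-$q$ statement once more to each of these transfers them to $X_{\{\tau^{q}\}}^{\{\delta^{q}\}}$. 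Adjoining the trivial case $q=0$ completes the first assertion. I would also record that the hypothesis $\lambda^{*}\neq0$ is forced by Lemma~\ref{spectrum}, since $\lambda=0$ may be an eigenvalue in some derivative spaces but not in others, whereas the fixed-$q$ statement itself holds for all real $\bar\lambda$ (including $0$).

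The hard part will be the sign bookkeeping hidden inside ``adjoint operators have the same spectrum.'' The formal adjoint of $T_{n}[0]=d^{n}/dt^{n}$ is $(-1)^{n}\,d^{n}/dt^{n}$, as recorded in \eqref{EC::Ad}, so to recover $T_{n}[0]$ itself on the adjoint space one must pass through $\widehat T_{n}[(-1)^{n}M]=(-1)^{n}T_{n}^{*}[M]$ and keep careful track of whether the matching parameter on the adjoint side is $\bar\lambda$ or $(-1)^{n}\bar\lambda$; the identity \eqref{Ec::gg1} is exactly the device that performs this conversion at the level of Green's functions, and the sign it produces is invisible at $\bar\lambda=0$, consistent with the fixed-$q$ statement holding for all real $\bar\lambda$ while the first assertion must exclude $\lambda^{*}=0$. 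A secondary point needing justification is the Fredholm/``same spectrum'' step itself, which is legitimate here because these two-point boundary value operators have compact resolvent and hence discrete spectrum whenever the corresponding Green's function exists.
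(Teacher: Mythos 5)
Your plan follows the paper's own proof, which is exactly the one-line invocation you describe: the theorem preceding the lemma identifies $X_{\{\tau_{1}^{q},\dots,\tau_{n-k}^{q}\}}^{\{\delta_{1}^{q},\dots,\delta_{k}^{q}\}}$ as the adjoint space of $X_{\{\mu_{1}^{q},\dots,\mu_{k}^{q}\}}^{\{\rho_{1}^{q},\dots,\rho_{n-k}^{q}\}}$, the adjoint correspondence transfers the spectrum, and Lemma \ref{spectrum} does the chaining. The gap is in the step you yourself single out as ``the hard part,'' and it is not closed by \eqref{Ec::gg1}. What the Fredholm/Green's-function argument proves is a \emph{twisted} equivalence: $\bar\lambda$ is an eigenvalue of $T_{n}[0]$ in $X_{\{\mu_{1}^{q},\dots,\mu_{k}^{q}\}}^{\{\rho_{1}^{q},\dots,\rho_{n-k}^{q}\}}$ if and only if $(-1)^{n}\bar\lambda$ is an eigenvalue of $T_{n}[0]$ in $X_{\{\tau_{1}^{q},\dots,\tau_{n-k}^{q}\}}^{\{\delta_{1}^{q},\dots,\delta_{k}^{q}\}}$. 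Indeed, by \eqref{Ec::Tg} the kernel of $T_{n}^{*}[\bar\lambda]$ on the adjoint domain coincides with the kernel of $\widehat T_{n}[(-1)^{n}\bar\lambda]$, and $\widehat T_{n}[(-1)^{n}\bar\lambda]\,v=v^{(n)}+(-1)^{n}\bar\lambda\,v$; equivalently, \eqref{Ec::gg1} pairs $g_{M}$ with $\widehat g_{(-1)^{n}M}$, so the adjoint-side Green's function that exists or degenerates together with $g_{M}$ is the one with parameter $(-1)^{n}M$, not $M$. Thus \eqref{Ec::gg1} is precisely the statement that the factor $(-1)^{n}$ persists; it is not a device that removes it. For $n$ even this does yield the fixed-$q$ claim, but for $n$ odd the equal-$\bar\lambda$ claim you assert (and which the lemma states) fails in general: take $n=3$ with conditions $u(a)=u'(a)=u(b)=0$, whose adjoint conditions are $v(a)=v(b)=v'(b)=0$. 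The reflection $t\mapsto a+b-t$ carries each problem onto the other while replacing $\lambda$ by $-\lambda$, so the two sets of eigenvalues are negatives of each other, and a direct computation shows that the first problem has only positive eigenvalues; hence the two problems share no eigenvalue at all.

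Two remarks on what survives. First, the opening assertion of the lemma (the one for $\lambda^{*}\neq 0$) never needed the adjoint correspondence: by the commutativity of \eqref{diagrama}, $X_{\{\tau_{1},\dots,\tau_{n-k}\}}^{\{\delta_{1},\dots,\delta_{k}\}}=\phi^{q}\bigl(X_{\{\tau_{1}^{q},\dots,\tau_{n-k}^{q}\}}^{\{\delta_{1}^{q},\dots,\delta_{k}^{q}\}}\bigr)$, so Lemma \ref{spectrum} applied to the adjoint-side chain of spaces gives it at once; alternatively, your chain of equivalences survives the correction, because it crosses the adjoint relation twice and the two factors $(-1)^{n}$ cancel. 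Second, the fixed-$q$ assertion should be recorded in the twisted form above (or restricted to $n$ even); that twisted form is also what the paper's later arguments genuinely use, since in Steps 3 and 4 of the proof of Theorem \ref{T::IP} the adjoint side always enters with the parameter $(-1)^{n}M$. Be aware that the paper's own proof is the same unqualified appeal to ``adjoint operators have the same spectrum'' and glosses over exactly this sign, so your proposal is faithful to the paper's route; but, as written, neither argument establishes the statement when $n$ is odd.
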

Following the line of Definition \ref{d-SIP} and the characterization of Theorem \ref{T::in2}, we introduce the following concept for $v_{s}^{q}[M]$. 
\begin{definition}\label{def:::1}
	Let $q\in\{0,\dots,n-1\}$ be fixed. We said that the function $v_{s}^{q}[M]$ is strongly positive (strongly negative) on $I\times I$ if it satisfies the following properties:
	\begin{itemize}
		\item $v_{s}^{q}[M]>0\,(<0)$  a.e. on $(a,b)$.
		\item $(v_{s}^{q})^{(\alpha^{q})}[M](a)>0\,(<0)$  for a.e. $s\in(a,b)$, with $\alpha^{q}$ defined in \eqref{alphaq}.
		\item  $(-1)^{\beta^{q}}(v_{s}^{q})^{(\beta^{q})}[M](b)>0\,(<0)$ for a.e. $s\in(a,b)$, with $\beta^{q}$ defined in \eqref{betaq}.
	\end{itemize}
\end{definition}
Let us consider the following two conditions on $g_M(t,s)$ introduced in \cite[pages 78 and 86]{system} as follows:
\begin{itemize}
	\item[$(P_g$)] Suppose that there is a continuous function $\phi(t)>0$ for all $t\in (a,b)$ and $k_1,\ k_2\in L^1(I)$, such that $0<k_1(s)<k_2(s)$ for a.e. $s\in I$, satisfying
	\[\phi(t)\,k_1(s)\leq g_M(t,s)\leq \phi(t)\, k_2(s)\,,\quad \text{for a.e. } (t,s)\in I \times I \,.\]
	\item[($N_g$)] Suppose that there is a continuous function $\phi(t)>0$ for all $t\in (a,b)$ and $k_1,\ k_2\in L^1(I)$, such that $k_1(s)<k_2(s)<0$ for a.e. $s\in I$, satisfying
	\[\phi(t)\,k_1(s)\leq g_M(t,s)\leq \phi(t)\, k_2(s)\,,\quad \text{for a.e. }(t,s)\in I \times I\,.\]
\end{itemize}
As a particular case of \cite[Theorem 5.1]{CabSaab}, the following result is attained.
\begin{theorem}\label{signo}
	Suppose that ${\{\sigma_1,\dots,\sigma_k\}}-{\{\varepsilon_1,\dots,\varepsilon_{n-k}\}}$ satisfies condition $(N_a)$. Then the following properties are fulfilled:
	\begin{itemize}
		\item If $n-k$ is even, then $T_n[0]$ is strongly inverse positive in $X_{\{\sigma_1,\dots,\sigma_k\}}^{\{\varepsilon_1,\dots,\varepsilon_{n-k}\}}$ and, moreover, the related Green's function, $g_{0}(t,s)$, satisfies $(P_g)$.
		\item If $n-k$ is odd, then $T_n[0]$ is strongly inverse negative in $X_{\{\sigma_1,\dots,\sigma_k\}}^{\{\varepsilon_1,\dots,\varepsilon_{n-k}\}}$ and, moreover, the related Green's function, $g_{0}(t,s)$, satisfies $(N_g)$.
	\end{itemize}
\end{theorem}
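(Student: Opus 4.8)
The plan is to verify, for the specific operator $T_n[0]\,u=u^{(n)}$, the three pointwise inequalities that Theorem \ref{T::in2} shows to be equivalent to strong inverse positivity (negativity) in $X_{\{\sigma_1,\dots,\sigma_k\}}^{\{\varepsilon_1,\dots,\varepsilon_{n-k}\}}$, and then to upgrade the strict interior sign to the two-sided estimate $(P_g)$ (respectively $(N_g)$). First I would dispose of well-posedness: by Remark \ref{recordNa} the operator $T_n[0]$ satisfies property $(T_d)$ in $X_{\{\sigma_1,\dots,\sigma_k\}}^{\{\varepsilon_1,\dots,\varepsilon_{n-k}\}}$ with $v_1\equiv\cdots\equiv v_n\equiv 1$, so Lemma \ref{LemaNa} applies and the hypothesis $(N_a)$ guarantees that $\lambda=0$ is not an eigenvalue; hence the Green's function $g_0$ exists and is uniquely determined.

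Next I would produce an explicit piecewise-polynomial description of $g_0$. For fixed $s\in(a,b)$ the function $t\mapsto g_0(t,s)$ solves $u^{(n)}=0$ on $[a,s]$ and on $[s,b]$, so on the left block it is a combination of the monomials $(t-a)^j/j!$ with $j\notin\{\sigma_1,\dots,\sigma_k\}$ (exactly $n-k$ of them, the conditions \eqref{Ec::cfa} killing the remaining powers), and on the right block a combination of $(t-b)^j/j!$ with $j\notin\{\varepsilon_1,\dots,\varepsilon_{n-k}\}$ (exactly $k$ of them). The $n$ free coefficients are then fixed by the $n-1$ matching conditions ($g_0\in C^{n-2}$) together with the unit jump \eqref{e-salto}; equivalently, Cramer's rule expresses $g_0(t,s)$ as a ratio of determinants whose common denominator is, up to sign, the determinant encoding the boundary data, nonzero precisely because of $(N_a)$.

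The core of the argument is the sign and its parity. By the definitions \eqref{Ec::alpha}--\eqref{Ec::beta} of $\alpha$ and $\beta$, the integers $0,\dots,\alpha-1$ all lie in $\{\sigma_1,\dots,\sigma_k\}$ and $0,\dots,\beta-1$ all lie in $\{\varepsilon_1,\dots,\varepsilon_{n-k}\}$, so for each fixed $s$ the function $g_0(\cdot,s)$ vanishes to order at least $\alpha$ at $a$ and at least $\beta$ at $b$; that these orders are exact, i.e.\ that $\frac{\partial^\alpha}{\partial t^\alpha}g_0(a,s)$ and $\frac{\partial^\beta}{\partial t^\beta}g_0(b,s)$ do not vanish, is part of what the sign analysis must establish. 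Using the disconjugacy of $u^{(n)}$ on $[a,b]$ (a nontrivial solution of $u^{(n)}=0$ is a polynomial of degree $\le n-1$, hence has at most $n-1$ zeros counted with multiplicity), a zero-counting argument across the two blocks shows that $g_0(\cdot,s)$ has no zero in the open interval and, comparing the local expansions $g_0(t,s)\sim c_a(t-a)^\alpha$ near $a$ and $g_0(t,s)\sim c_b(t-b)^\beta$ near $b$, that $\frac{\partial^\alpha}{\partial t^\alpha}g_0(a,s)$ and $(-1)^\beta\frac{\partial^\beta}{\partial t^\beta}g_0(b,s)$ both carry the common sign $(-1)^{n-k}$. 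This yields exactly the three inequalities of Theorem \ref{T::in2}: strict positivity of $g_0$ and of both boundary derivatives when $n-k$ is even, and strict negativity when $n-k$ is odd. Condition $(N_a)$ enters here twice, both to keep the governing determinant from vanishing and to force the correct alternation of signs.

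Finally I would deduce $(P_g)$ and $(N_g)$. Setting $\phi(t)=(t-a)^\alpha\,(b-t)^\beta$, which is continuous and strictly positive on $(a,b)$ with vanishing orders matching those of $g_0(\cdot,s)$ at the endpoints, the quotient $g_0(t,s)/\phi(t)$ extends continuously and can be bounded above and below by integrable functions $k_2(s)$ and $k_1(s)$ of the same constant sign; the strict non-vanishing of the endpoint derivatives keeps $k_1$ bounded away from $0$, giving $0<k_1(s)<k_2(s)$ when $n-k$ is even and $k_1(s)<k_2(s)<0$ when $n-k$ is odd. The hard part is the constant-sign and parity step of the previous paragraph: controlling the interior zeros of the piecewise-polynomial $g_0(\cdot,s)$ and tracking how $(N_a)$ pins down the sign of the determinant; once that is settled, the endpoint estimates and the reduction through Theorem \ref{T::in2} and Definition \ref{d-SIP} are routine.
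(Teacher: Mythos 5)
Your architecture (existence of $g_0$ via Remark \ref{recordNa} and Lemma \ref{LemaNa}, reduction of strong inverse positivity/negativity to the three sign conditions of Theorem \ref{T::in2}, and the final passage to $(P_g)$/$(N_g)$ with $\phi(t)=(t-a)^\alpha(b-t)^\beta$) is sound, and the first and last steps are essentially routine, as you say. But the central step is a genuine gap, not a compressible routine argument. You assert that ``a zero-counting argument across the two blocks shows that $g_0(\cdot,s)$ has no zero in the open interval'' and that the endpoint derivatives carry the sign $(-1)^{n-k}$, invoking only that a nontrivial polynomial of degree $\le n-1$ has at most $n-1$ zeros counted with multiplicity. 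This count does not close for the boundary conditions considered here, precisely because the orders $\sigma_1<\cdots<\sigma_k$ and $\varepsilon_1<\cdots<\varepsilon_{n-k}$ need not be consecutive: a condition such as $u''(a)=0$ with $u'(a)\neq 0$ contributes nothing to the vanishing multiplicity of $g_0(\cdot,s)$ at $a$. The function vanishes only to order $\alpha$ at $a$ and $\beta$ at $b$, and $\alpha+\beta$ can be much smaller than $n$, so naive multiplicity counting leaves ample room for interior zeros. Turning the ``gap'' conditions into usable zero information is exactly what requires the disconjugacy machinery — Rolle-type arguments applied through the decomposition $(T_d)$ to the quotients $T_l u$, organized by the Polya-type condition $(N_a)$ — together with a separate argument pinning down the parity $(-1)^{n-k}$ and the nonvanishing of $\frac{\partial^\alpha}{\partial t^\alpha}g_0(a,s)$ and $\frac{\partial^\beta}{\partial t^\beta}g_0(b,s)$. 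None of this follows from ``Cramer's rule plus a nonzero determinant''; the determinant being nonzero only says $\lambda=0$ is not an eigenvalue, which you already used for existence.

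It is worth noting that the paper itself does not prove Theorem \ref{signo}: it is obtained as a particular case of \cite[Theorem 5.1]{CabSaab}, whose proof (in a long paper devoted largely to this point) carries out the induction and spectral/adjoint analysis that your sketch replaces by one sentence. So your proposal is not a different correct route; it is the statement of a plan whose hard kernel coincides with the cited result and is left unproved. If you want a self-contained argument, you must either reproduce that induction, or restrict to $(k,n-k)$ conditions ($\sigma_j=j-1$, $\varepsilon_j=j-1$), where the boundary conditions do translate into zeros of multiplicity $k$ at $a$ and $n-k$ at $b$ and your counting argument, combined with the unit jump \eqref{e-salto}, genuinely closes.
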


Let us define ${\{\sigma_{1}^{q},\dots,\sigma_{c_q}^{q}\}}, {\{\varepsilon_{1}^{q},\dots,\varepsilon_{d_{q}}^{q}\}} \subset \{0,\dots,n-1\}$, in the following way:
\begin{eqnarray}
	\label{cq}\{\sigma_{1}^{q},\dots,\sigma_{c_q}^{q}\}&=&\{\sigma_{1}-q,\dots,\sigma_{k}-q\}\cap \{0,\dots,n-q-1\},\\
	\label{dq} \{\varepsilon_{1}^{q},\dots,\varepsilon_{d_q}^{q}\}&=&\{\varepsilon_{1}-q,\dots,\varepsilon_{n-k}-q\}\cap \{0,\dots,n-q-1\}.
\end{eqnarray}

\begin{remark}
	Taking into account that function $v_{s}^{q}(t)=\frac{\partial^{q}}{\partial t^{q}}\,g_{0}(t,s)$ is the Green's function related to operator $T_{n-q}[0]$ in the space $X_{\{\sigma_{1}^{q},\dots,\sigma_{c_{q}}^{q}\}}^{\{\varepsilon_{1}^{q},\dots,\varepsilon_{d_q}^{q}\}}$, then operator $T_{n-q}[0]$ is strongly inverse positive (negative) in the space $X_{\{\sigma_{1}^{q},\dots,\sigma_{c_{q}}^{q}\}}^{\{\varepsilon_{1}^{q},\dots,\varepsilon_{d_q}^{q}\}}$ if and only if it satisfies the Definition~\ref{d-SIP} with the same $\alpha^{q}$ and $\beta^{q}$ since $$\mu_{1}^{q}=\sigma_{1}^{q},\,\mu_{2}^{q}=\sigma_{2}^{q},\,\dots,\mu_{k-(j-1)}^{q}=\sigma_{c_{q}}^{q},$$ and $$\rho_{1}^{q}=\varepsilon_{1}^{q},\,\rho_{2}^{q}=\varepsilon_{2}^q,\,\dots,\rho_{n-k-(r-1)}^{q}=\varepsilon_{d_{q}}^q.$$
\end{remark}

If ${\{\sigma_1,\dots,\sigma_k\}}-{\{\varepsilon_1,\dots,\varepsilon_{n-k}\}}$ satisfies the condition $(N_a)$ then $c_{q}+d_{q}\leq n-q$. Moreover, if $c_{q}+d_{q}<n-q$ then ${\{\sigma_{1}^{q},\dots,\sigma_{c_{q}}^{q}\}}-{\{\varepsilon_{1}^{q},\dots,\varepsilon_{d_q}^{q}\}}$ does not satisfy $(N_a)$, and in this case  $\lambda=0$ is an eigenvalue of $X_{\{\sigma_{1}^{q},\dots,\sigma_{c_{q}}^{q}\}}^{\{\varepsilon_{1}^{q},\dots,\varepsilon_{d_q}^{q}\}}$. We will show now that $c_{q}+d_{q}=n-q$ is a necessary condition for the $q$-th partial derivative of the Green's function to have constant sign. 

\begin{lemma}\label{lem:signo_cte_g_parcial}
	Let $q\in \{1,\dots, n-1\}$. Suppose that ${\{\sigma_1,\dots,\sigma_k\}}-{\{\varepsilon_1,\dots,\varepsilon_{n-k}\}}$ satisfies condition $(N_a)$. If there exists $M\in \mathbb{R}$ such that $\frac{\partial^{q}}{\partial t^{q}} g_{M}$ has constant sign on $I\times I$, then $c_{q}+d_{q}=n-q$ and $g_M$ has constant sign on $I\times I$.
\end{lemma}
\begin{proof}
	If $v_{s}^{q}[M](t)=\frac{\partial^{q}}{\partial t^{q}} g_{M}(t,s)$ has constant sign, then $v_{s}^{q-1}[M](t)=\frac{\partial^{q-1}}{\partial t^{q-1}} g_{M}(t,s)$ is a monotone and continuous function. Thus, $v_{s}^{q-1}[M]$ has at most one zero on $I$. 
	
	By recurrence, we have that $v_{s}^{q-l}[M](t)=\frac{\partial^{q-l}}{\partial t^{q-l}} g_{M}(t,s)$ has at most $l$ zeros on $I$. In particular, $g_{M}(\cdot,s)$ has at most $q$ zeros on $I$. 
	
	Now, since condition $(N_a)$ implies that $g_{M}(\cdot,s)$ has at least $q$ zeros of order smaller or equal to $q$ on the boundary, we have, arguing as in the proof of \cite[Theorem 8.1, Step 5]{CabSaab}, that the existence of any of such zeros implies that function $g_{M}(\cdot,s)$ lost one zero on the interior of $I$. As a consequence, 	we deduce that:
	\begin{itemize}
		\item $g_{M}(\cdot,s)$  has exactly $q$ zeros of order smaller or equal to $q$ on the boundary, that is, $c_{q}+d_{q}=n-q$.
		\item $g_{M}(\cdot,s)$ cannot have any zero on $(a,b)$, that is, $g_M$ has constant sign.
	\end{itemize} 
\end{proof}

Next, for any $q\in \{1,\dots,n-1\}$ we determine the positive (negative) sign of $v_{s}^{q}(t):=v_{s}^{q}[0](t)$ on the interval $I$.
\begin{theorem}\label{resultSigno}
	Let $q\in \{1,\dots,n-1\}$. Suppose that ${\{\sigma_1,\dots,\sigma_k\}}-{\{\varepsilon_1,\dots,\varepsilon_{n-k}\}}$ satisfies condition $(N_a)$ and $c_q+d_q=n-q$. The following properties are fulfilled:
	\begin{itemize}
		\item [(a)] If $c_{q}\geq 1$ and $d_{q}\geq 1$, then $v_{s}^{q}$ is strongly positive on $I\times I$ if $n-q-c_{q}$ is even and   $v_{s}^{q}$ is strongly negative on $I\times I$ if $n-q-c_{q}$ is odd.
		\item [(b)] If $c_{q}=n-q$ and $d_{q}= 0$, then $v_{s}^{q}(t)>0,\dots,v_{s}^{n-1}(t)>0$ in $(s,b]$, ${v_{s}^{q}(t)=\dots=v_{s}^{n-2}(t)=0}$ in $[a,s]$ and $v_{s}^{n-1}(t)=0$ in $[a,s)$.
		\item [(c)] If $c_{q}=0$ and $d_{q}=n-q$, then $v_{s}^{q}(t)>0$ in $[a,s)$ if $q=n-l$ with $l$ even and $v_{s}^{q}(t)<0$ in $[a,s)$ if $q=n-l$ with $l$ odd. Moreover, $v_{s}^{q}(t)=\dots=v_{s}^{n-2}(t)=0$ in $[s,b]$ and $v_{s}^{n-1}(t)=0$ in $(s,b]$. 
	\end{itemize}
\end{theorem}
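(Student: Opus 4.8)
The plan is to reduce every case to a statement about the Green's function of the lower–order operator $T_{n-q}[0]=\frac{d^{n-q}}{dt^{n-q}}$ in the space $X_{\{\sigma_{1}^{q},\dots,\sigma_{c_q}^{q}\}}^{\{\varepsilon_{1}^{q},\dots,\varepsilon_{d_q}^{q}\}}$, using the Remark preceding the statement, which identifies $v_{s}^{q}(t)=\frac{\partial^{q}}{\partial t^{q}}\,g_{0}(t,s)$ with that Green's function precisely when $c_{q}+d_{q}=n-q$. The hypothesis $c_{q}+d_{q}=n-q$ is exactly what guarantees that the reduced problem carries the correct number $n-q$ of boundary conditions for an operator of order $n-q$, so that $v_{s}^{q}$ is a genuine Green's function and the whole machinery of Section~2 applies verbatim with $n$ replaced by $n-q$ and $k$ by $c_{q}$.

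For case (a), I would first check that the reduced indices $\{\sigma_{1}^{q},\dots,\sigma_{c_q}^{q}\}-\{\varepsilon_{1}^{q},\dots,\varepsilon_{d_q}^{q}\}$ satisfy $(N_{a})$. Writing $c_{q}=|\{i:\sigma_{i}\ge q\}|$ and $d_{q}=|\{i:\varepsilon_{i}\ge q\}|$, the equality $c_{q}+d_{q}=n-q$ forces $\sum_{\sigma_{j}<q}1+\sum_{\varepsilon_{j}<q}1=q$, so for each $h'\in\{1,\dots,n-q-1\}$ one computes $\sum_{\sigma_{j}^{q}<h'}1+\sum_{\varepsilon_{j}^{q}<h'}1=\big(\sum_{\sigma_{j}<h'+q}1+\sum_{\varepsilon_{j}<h'+q}1\big)-q\ge (h'+q)-q=h'$, invoking the original $(N_{a})$ at $h=h'+q$ (which lies in $\{1,\dots,n-1\}$). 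Since $c_{q}\ge 1$ and $d_{q}\ge 1$, the reduced problem has order $n-q$ with $c_{q}$ conditions at $a$ and $(n-q)-c_{q}=d_{q}$ at $b$, so Theorem~\ref{signo} applies: $v_{s}^{q}$ is strongly inverse positive when $n-q-c_{q}$ is even and strongly inverse negative when it is odd. The strong positivity/negativity in the sense of Definition~\ref{def:::1}, with the $\alpha^{q},\beta^{q}$ of \eqref{alphaq}--\eqref{betaq}, then follows from the characterization of Theorem~\ref{T::in2} adapted to order $n-q$.

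For cases (b) and (c), Theorem~\ref{signo} is \emph{not} available because all the surviving conditions sit at a single endpoint (its hypothesis $k\in\{1,\dots,n-1\}$ fails), so I would solve for the reduced Green's function explicitly. In case (b), $c_{q}=n-q$ and $d_{q}=0$ force $\{\sigma_{1}^{q},\dots,\sigma_{n-q}^{q}\}=\{0,\dots,n-q-1\}$, i.e. the reduced problem is the initial value problem for $\frac{d^{n-q}}{dt^{n-q}}$ with all conditions at $a$, whose Green's function is the causal kernel $v_{s}^{q}(t)=\frac{(t-s)^{n-q-1}}{(n-q-1)!}$ for $t>s$ and $0$ for $t<s$. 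One observes that $c_{q}=n-q,\ d_{q}=0$ propagates to every $q'\in\{q,\dots,n-1\}$ (all $\varepsilon_{i}<q$, while the $\sigma_{i}\ge q$ are exactly $q,\dots,n-1$), so each $v_{s}^{q'}$ is the corresponding causal kernel, strictly positive on $(s,b]$; the jump relation \eqref{e-salto} read at the top level accounts for the half–open interval $[a,s)$ for the last derivative $v_{s}^{n-1}$. Case (c) is symmetric: $c_{q}=0,\ d_{q}=n-q$ yields the terminal value problem at $b$, whose Green's function vanishes on $(s,b]$ and equals $\frac{(-1)^{n-q}}{(n-q-1)!}\,(s-t)^{n-q-1}$ on $[a,s)$, the value $-1$ of $\partial_{t}^{n-q-1}v_{s}^{q}$ on the left being fixed by the $+1$ jump of \eqref{e-salto}; its sign is $(-1)^{n-q}=(-1)^{l}$ with $l=n-q$, giving the stated parity.

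The main obstacle I anticipate is the combinatorial bookkeeping: establishing the reduced $(N_{a})$ and matching the parity $n-q-c_{q}=d_{q}$ to the even/odd dichotomy of Theorem~\ref{signo} in case (a), and, in cases (b) and (c), tracking to which endpoint the conditions collapse together with the behaviour of $v_{s}^{n-1}$ at the jump. Once the identification of $v_{s}^{q}$ with the reduced Green's function is secured, the sign and vanishing statements are immediate.
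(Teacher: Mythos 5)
Your proposal is correct and follows essentially the same route as the paper: case (a) is handled by identifying $v_s^q$ with the Green's function of $T_{n-q}[0]$ in the reduced space $X_{\{\sigma_{1}^{q},\dots,\sigma_{c_q}^{q}\}}^{\{\varepsilon_{1}^{q},\dots,\varepsilon_{d_q}^{q}\}}$, transferring condition $(N_a)$ and invoking Theorem \ref{signo}, while cases (b) and (c) rest on the boundary conditions collapsing to a single endpoint together with the jump relation \eqref{e-salto}. The only differences are cosmetic: where the paper integrates up from the jump by recurrence, you write the causal/anticausal kernels in closed form, and your verification of the reduced $(N_a)$ condition is more explicit than the paper's one-line assertion.
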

\begin{proof} 
	\begin{itemize}
		\item [(a)] In this case, it easy to verify that the space $X_{\{\sigma_{1}^{q},\dots,\sigma_{c_{q}}^{q}\}}^{\{\varepsilon_{1}^{q},\dots,\varepsilon_{d_q}^{q}\}}$ satisfies the condition $(N_a)$ since \begin{equation*}
			\sum_{\sigma_j-q< h}1+\sum_{\varepsilon_j-q< h} 1\geq h\,,\quad \forall h\in\{1,\dots, n-q-1\}\,.\end{equation*}	
		So, taking into account the fact that $v_{s}^{q}$ is the Green's function related to operator $T_{n-q}[0]$ in the space $X_{\{\sigma_{1}^{q},\dots,\sigma_{c_{q}}^{q}\}}^{\{\varepsilon_{1}^{q},\dots,\varepsilon_{d_q}^{q}\}}$, we apply Theorem \ref{signo}, and we have proved the first statement.
		\item[(b)] Since $v_{s}^{(n)}(t)=0$ for all $t\neq 0$, we have, by \eqref{e-salto} and $v_{s}^{(n-1)}(a)=0$, that $\frac{\partial^{n-1}}{\partial t^{n-1}}\,g_{0}(t,s)=0$ if $t<s$ and $\frac{\partial^{n-1}}{\partial t^{n-1}}\,g_{0}(t,s)=1$ if $t>s$.
		
		Since $v_{s}^{h}(a)=0$ for all $q\le h\le n-2$, and $g_{0}\in C^{n-2}(I\times I)$, we deduce that $\frac{\partial^{h}}{\partial t^{h}}\,g_{0}(t,s)=0$ if $t<s$ and $\frac{\partial^{h}}{\partial t^{h}}\,g_{0}(t,s)>0$ if $t>s$. So, we conclude the proof of second claim.
		\item[(c)] In this case arguing as in the previous case, since $v_{s}^{(n)}(b)=0$, we have that\linebreak 
		$\frac{\partial^{n-1}}{\partial t^{n-1}}\,g_{0}(t,s)=0$ if $t>s$ and $\frac{\partial^{n-1}}{\partial t^{n-1}}\,g_{0}(t,s)=-1$ if $t<s$.
		
		As consequence, since $v_{s}^{h}(b)=0$ for all $q\le h\le n-2$, and $g_{0}\in C^{n-2}(I\times I)$, we deduce, by recurrence, that  
		$\frac{\partial^h}{\partial t^h} g_{0}(t,s)=0$ for all $t>s$ and   $(-1)^{n-h}\frac{\partial^h}{\partial t^h} g_{0}(t,s)>0$ for all $t<s$. Thus, the third assertion holds.
	\end{itemize}
\end{proof}

As for the set of parameters of $M$ in which the function $v_{s}^{q}[M]$, $q\in\{1,\ldots,n-1\}$ maintains constant sign and monotony with respect to $M$, we have the following results:
\begin{lemma}\label{lem:signo_cte_parcialq1_parcialq2}
	Let $q_1,\, q_2\in \{1,\dots, n-1\}$, $q_1<q_2$ be such that $c_{q_1}+d_{q_1}=n-q_1$. Suppose that ${\{\sigma_1,\dots,\sigma_k\}}-{\{\varepsilon_1,\dots,\varepsilon_{n-k}\}}$ satisfies condition $(N_a)$. If there exists $M\in \mathbb{R}$ such that $\frac{\partial^{q_2}}{\partial t^{q_2}} g_{M}$ has constant sign on $I\times I$, then $\frac{\partial^{q_1}}{\partial t^{q_1}} g_{M}$ has constant sign on $I\times I$.
\end{lemma}
\begin{proof}
	Reasoning as in Lemma~\ref{lem:signo_cte_g_parcial}, we deduce that $v_{s}^{q_1}[M](t)=\frac{\partial^{q_1}}{\partial t^{q_1}} g_{M}(t,s)$ has at most $q_2-q_1$ zeros on $I$. 
	
	Now, condition $c_{q_1}+d_{q_1}=n-q_1$  implies that $v_{s}^{q_1}[M]$ has at least $q_2-q_1$ zeros of order belonging to the set $\{q_1, \ldots,q_2-1\}$ on the boundary. Thus, arguing as in the proof of  Lemma~\ref{lem:signo_cte_g_parcial} again, we deduce that $v_{s}^{q_1}[M]$ cannot have any zero on $(a,b)$. Thus, $v_{s}^{q_1}[M]$ has constant sign.
\end{proof}

Moreover, we recall the following facts:
\begin{itemize}
	\item If $n-k$ is even, then from Theorem \ref{signo} and \cite[Lemma 1.8.33]{system} we have that $g_{M}(t,s)>0$ for all $(t,s)\in (a,b)\times (a,b)$ if and only if $M\in (\lambda_{1},+\infty)$ or $M\in (\lambda_{1},\lambda_{2}]$, and it is monotone decreasing with respect to $M$ on such interval, where $\lambda_{1}<0$ is the first eigenvalue of $T_{n}[0]$ in $X_{\{\sigma_1,\dots,\sigma_k\}}^{\{\varepsilon_1,\dots,\varepsilon_{n-k}\}}$ and $\lambda_{2}>0$ (if the interval is bounded) is not an eigenvalue of $T_{n}[0]$ in $X_{\{\sigma_1,\dots,\sigma_k\}}^{\{\varepsilon_1,\dots,\varepsilon_{n-k}\}}$. The value of $\lambda_{2}$ is characterized in \cite[Sect.7 and Sect.8]{CabSaab}.
	\item If $n-k$ is odd, then from Theorem \ref{signo} and \cite[Lemma 1.8.25]{system} we have that $g_{M}(t,s)<0$ for all $(t,s)\in (a,b)\times (a,b)$ if and only if $M\in (-\infty,\lambda_{1})$ or $M\in [\overline{\lambda_{2}},\lambda_{1})$ and it is monotone decreasing with respect to $M$ on such interval, where $\lambda_{1}>0$ is the first eigenvalue of $T_{n}[0]$ in $X_{\{\sigma_1,\dots,\sigma_k\}}^{\{\varepsilon_1,\dots,\varepsilon_{n-k}\}}$ and $\overline{\lambda_{2}}<0$ (if the interval is bounded) is not an eigenvalue of $T_{n}[0]$ in $X_{\{\sigma_1,\dots,\sigma_k\}}^{\{\varepsilon_1,\dots,\varepsilon_{n-k}\}}$. The value of $\overline{\lambda_{2}}$ is characterized in \cite[Sect. 7 and Sect. 8]{CabSaab}.
\end{itemize}

Therefore, as a direct consequence of Theorem~\ref{signo}, Lemma~\ref{lem:signo_cte_g_parcial} and \cite[Theorem 3.6]{Monotonia}, the following result is deduced.
\begin{lemma}\label{lemma:monotonia_derivadas} The following properties hold:
	\begin{itemize}
		\item If $n-k$ is even and $\frac{\partial^{q}}{\partial t^{q}} g_{M}(t,s)>0$ on $(a,b)\times (a,b)$ for some $M\in \mathbb{R}$, then $\frac{\partial^{q}}{\partial t^{q}} g_{M}(t,s)>0$ if and only if $M\in (\lambda_{1},M_{q}]$, with $M_{q}\le \lambda_{2}$. Moreover, $\frac{\partial^{q}}{\partial t^{q}} g_{M}$ is monotone decreasing with respect to $M\in (\lambda_{1},M_{q}]$.
		
		\item If $n-k$ is even and $\frac{\partial^{q}}{\partial t^{q}} g_{M}(t,s)<0$ on $(a,b)\times (a,b)$ for some $M\in \mathbb{R}$, then $\frac{\partial^{q}}{\partial t^{q}} g_{M}(t,s)<0$ if and only if $M\in (\lambda_{1},M_{q}]$, with $M_{q}\le \lambda_{2}$. Moreover, $\frac{\partial^{q}}{\partial t^{q}} g_{M}$ is monotone increasing with respect to $M\in (\lambda_{1},M_{q}]$.
		
		\item If $n-k$ is odd and $\frac{\partial^{q}}{\partial t^{q}} g_{M}(t,s)>0$ on $(a,b)\times (a,b)$ for some $M\in \mathbb{R}$, then $\frac{\partial^{q}}{\partial t^{q}} g_{M}(t,s)>0$ if and only if $M\in [M_{q},\lambda_{1})$, with $M_{q}\ge \overline{\lambda_{2}}$. Moreover, $\frac{\partial^{q}}{\partial t^{q}} g_{M}$ is monotone increasing with respect to $M\in [M_{q},\lambda_{1})$. 
		
		\item If $n-k$ is odd and $\frac{\partial^{q}}{\partial t^{q}} g_{M}(t,s)<0$ on $(a,b)\times (a,b)$ for some $M\in \mathbb{R}$,  then $\frac{\partial^{q}}{\partial t^{q}} g_{M}(t,s)<0$ if and only if $M\in [M_{q},\lambda_{1})$, with $M_{q}\ge\overline{\lambda_{2}}$. Moreover, $\frac{\partial^{q}}{\partial t^{q}} g_{M}$ is monotone decreasing with respect to $M\in [M_{q},\lambda_{1})$.
	\end{itemize}
\end{lemma}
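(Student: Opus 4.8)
The plan is to prove all four bullets simultaneously, since they differ only by the parity of $n-k$ and by the assumed sign, and to assemble the statement from exactly the three ingredients announced: the reduction of the constant-sign problem for the $t$-derivative to that of $g_M$ itself (Lemma \ref{lem:signo_cte_g_parcial}), the already recorded description of the constant-sign interval of $g_M$ (which rests on Theorem \ref{signo} and the cited lemmas of \cite{system}), and the parameter-monotonicity furnished by \cite[Theorem 3.6]{Monotonia}. Throughout I would fix $q\in\{1,\dots,n-1\}$, write $v_s^q[M]=\frac{\partial^q}{\partial t^q}g_M$, and let $\mathcal{S}$ be the set of parameters for which $v_s^q[M]$ keeps a constant sign on $(a,b)\times(a,b)$; by hypothesis $\mathcal{S}\neq\varnothing$.

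First I would locate $\mathcal{S}$ inside the constant-sign interval of $g_M$. If $M\in\mathcal{S}$ then $v_s^q[M]$ has constant sign, so Lemma \ref{lem:signo_cte_g_parcial} forces $g_M$ to have constant sign as well; hence $M$ lies in the interval on which $g_M$ keeps its sign. By the facts recalled just above (consequences of Theorem \ref{signo}), this interval is $(\lambda_1,\lambda_2]$ when $n-k$ is even and $[\overline{\lambda_2},\lambda_1)$ when $n-k$ is odd, with $\lambda_1$ the first eigenvalue of $T_n[0]$ in $X_{\{\sigma_1,\dots,\sigma_k\}}^{\{\varepsilon_1,\dots,\varepsilon_{n-k}\}}$. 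This already gives the one-sided bounds $M_q\le\lambda_2$ (even case) and $M_q\ge\overline{\lambda_2}$ (odd case), and it singles out $\lambda_1$ as the endpoint that must be excluded, because $g_M$, and therefore $v_s^q[M]$, blows up along the first eigenfunction as $M\to\lambda_1$.

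Next I would apply \cite[Theorem 3.6]{Monotonia} to $v_s^q[M]$ to obtain that it is monotone in $M$ on that interval, the direction being dictated by the parity of $n-k$ together with the assumed sign: decreasing when $n-k$ is even and the derivative is positive, increasing when $n-k$ is even and the derivative is negative, increasing when $n-k$ is odd and the derivative is positive, and decreasing when $n-k$ is odd and the derivative is negative. Monotonicity then collapses $\mathcal{S}$ into a subinterval having $\lambda_1$ as its open endpoint. For instance, if $n-k$ is even and $v_s^q[M_0]>0$, monotone decrease gives $v_s^q[M]\ge v_s^q[M_0]>0$ for every $\lambda_1<M\le M_0$, so with $M_q=\sup\{M:\ v_s^q[M]>0\}$ one gets $\mathcal{S}=(\lambda_1,M_q]$; the remaining three cases are identical after reversing orientation, producing $(\lambda_1,M_q]$ in the even case and $[M_q,\lambda_1)$ in the odd case. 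The endpoint $M_q$ stays in $\mathcal{S}$ (closed) because the strict sign is preserved up to the last parameter at which it can hold, exactly as $\lambda_2$ is attained in the description of $g_M$.

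The hard part will be verifying that \cite[Theorem 3.6]{Monotonia} really applies to $v_s^q[M]$: unlike $g_M$, the function $v_s^q[M]$ is \emph{not} itself a Green's function (it lies only in $C^{n-2-q}$ and carries the shifted conditions \eqref{condicion1}--\eqref{condicion2}), so one must check that it still fits the framework of that theorem, namely that it solves $T_n[M]\,v=0$ off the diagonal under those boundary conditions and inherits the regularity and the jump \eqref{e-salto}. The secondary delicate point is the endpoint analysis, that is, openness at $\lambda_1$ (blow-up at the eigenvalue) and closedness at $M_q$ (persistence of the strict sign at the extreme parameter); both I would settle from the continuity of $v_s^q[M]$ in $M$ combined with the monotonicity just established.
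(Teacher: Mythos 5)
Your route is the same one the paper takes: the authors deduce Lemma \ref{lemma:monotonia_derivadas} as a direct consequence of Theorem \ref{signo} (through the recalled description of the constant-sign interval of $g_M$ with endpoints $\lambda_1$, $\lambda_2$, $\overline{\lambda_{2}}$), Lemma \ref{lem:signo_cte_g_parcial}, and \cite[Theorem 3.6]{Monotonia} --- exactly the three ingredients you assemble, with the same division of labor. Your first announced ``hard part'' is not an actual obstacle: the cited result of \cite{Monotonia} concerns precisely the parameter dependence of the partial derivatives $\frac{\partial^{q}}{\partial t^{q}}g_{M}$ of Green's functions for arbitrary two-point conditions, so it is invoked for $v_{s}^{q}[M]$ directly, exactly as the paper does; no re-verification of its framework is required (nor offered by the authors). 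Your sign bookkeeping (which parity gives decrease, which gives increase) agrees with the statement.

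The genuine gap is your treatment of the closed right endpoint. Setting $M_q=\sup\{M:\ v_{s}^{q}[M]>0\}$, you claim that $v_{s}^{q}[M_q]>0$ follows ``from the continuity of $v_s^q[M]$ in $M$ combined with the monotonicity just established.'' It does not. For a family that is monotone decreasing in $M$, continuity gives $v_{s}^{q}[M_q](t,s)=\inf_{M<M_q}v_{s}^{q}[M](t,s)$, and an infimum of strictly positive quantities is only $\ge 0$: a continuous decreasing family can be strictly positive for every $M<M_q$ and degenerate exactly at $M=M_q$ (think of $(M_q-M)\,\varphi(t,s)$ with $\varphi>0$). Hence continuity plus monotonicity can never, by themselves, prove that the constant-sign set is closed on the right; your argument establishes only $\mathcal{S}\supseteq(\lambda_1,M_q)$ together with $v_{s}^{q}[M_q]\ge 0$. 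Your remark that $M_q$ stays in $\mathcal{S}$ ``exactly as $\lambda_2$ is attained in the description of $g_M$'' is circular, since the attainment of $\lambda_2$ is itself a nontrivial spectral fact, not a formal consequence of monotonicity. In the paper this attainment is part of what is imported from \cite[Theorem 3.6]{Monotonia}; if one wants to prove it rather than cite it, one needs an argument of the type carried out in Step 1 of the proof of Theorem \ref{T::IP}: if the strict sign degenerated as $M\uparrow M_q$, monotonicity would force a boundary quantity (such as $w_{M,q}^{(\beta^{q})}(b)$) to vanish for all $M$ in a nondegenerate parameter interval, contradicting the discreteness of the spectrum of $T_{n}[0]$ in the corresponding space. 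As written, your proposal leaves the half-closed form $(\lambda_1,M_q]$ (respectively $[M_q,\lambda_1)$) unjustified.
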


Let $q\in\{0,\dots,n-1\}$ be fixed. Suppose that  $(-1)^{n-q-c_{q}}\,v_{s}^{q}$ is strongly positive on  $I\times I$. Then, for each $s\in (a,b)$, we obtain the following limits:
\begin{eqnarray}\nonumber \ell_{1}^{q}(s)&: = &\lim_{t\rightarrow a^+}\dfrac{(-1)^{n-q-c_{q}}\,v_{s}^{q}(t)}{(t-a)^{\alpha^{q}}\,(b-t)^{\beta^{q}}} = \dfrac{(-1)^{n-q-c_{q}}\,(v_{s}^{q})^{(\alpha^{q})}(a)}{\alpha^{q}!\, (b-a)^{\beta^{q}}}\,,\\\nonumber
	\ell_{2}^{q}(s)& :=&\lim_{t\rightarrow b^-}\dfrac{(-1)^{n-q-c_{q}}\,v_{s}^{q}(t)}{(t-a)^{\alpha^{q}}\,(b-t)^{\beta^{q}}} = \dfrac{(-1)^{n-q-c_{q}-\beta^{q}}\,(v_{s}^{q})^{(\beta^{q})}(b)}{\beta^{q}!\, (b-a)^{\alpha^{q}}}. \end{eqnarray}

For each $s\in(a,b)$, let us consider the following function defined on $I$ by 
\begin{eqnarray*}
	\tilde{u}_{s}^{q}(t)=\left\{
	\begin{array}{ccc}
		\ell_{1}^{q}(s)\,,& t=a,\\
		\dfrac{(-1)^{n-q-c_{q}}\,v_{s}^{q}(t)}{(t-a)^{\alpha^{q}}\,(b-t)^{\beta^{q}}}\,,& t\in (a,b),\\
		\ell_{2}^{q}(s)\,,& t=b.
	\end{array}
	\right.
\end{eqnarray*}
It is clear that  $\tilde{u}_{s}^{q}>0$ on $[a,b]$ for all $s\in (a,b)$. 

Since $g_{0}(t,s)\in C^{n-2}(I\times I)$,  $\frac{\partial^{n-1}}{\partial t^{n-1}} g_{0} \in C^{\infty}((I\times I)\setminus\{(t,t)\;/\; t\in I\})$ and there exists $\lim\limits_{s\rightarrow t^{\pm}} \frac{\partial^{n-1}}{\partial t^{n-1}} g_{0}(t,s)\in \R$, we deduce that there exists $K^{q}>0$ such that $\tilde{u}_{s}^{q}(t)\leq K^{q}$ for every $(t,s)\in I\times I$  and $q\in \{1,\dots,n-1\}$. Therefore, the following functions							 		
\begin{eqnarray}\nonumber 
	\tilde{k}_{1}^{q}(s)&=&\min_{t\in I} \tilde{u}_{s}^{q}(t)\,,\quad s\in I\,,\\\nonumber\\\nonumber
	\tilde{k}_{2}^{q}(s) &=& \max_{t\in I}\tilde{u}_{s}^{q}(t)\,,\quad s\in I\,, \end{eqnarray}
are continuous on $I$ and positive in $(a,b)$.

Taking $\phi(t)=(t-a)^{\alpha^{q}}\,(b-t)^{\beta^{q}}>0$  on $(a,b)$, the function $v_{s}^{q}(t)$ satisfies the condition $(P_g)$ if $n-q-c_{q}$ is even with $k_{1}(s)=\tilde{k}_{1}^{q}(s)$ and $k_{2}(s)=\tilde{k}_{2}^{q}(s)$ and condition $(N_g)$  if $n-q-c_{q}$ is odd with $k_{1}(s)=-\tilde{k}_{2}^{q}(s)$ and $k_{2}(s)=-\tilde{k}_{1}^{q}(s)$.

\section{Study of the constant sign of the derivatives of Green's function}
In this section for any $q\in \{1,\dots,n-1\}$ be fixed,  we will give a proof of the main result that characterizes the constant sign of the partial derivative with respect to $t$ of order $q$ of the Green's function related to operator $T_{n}[M]$ in $X_{\{\sigma_{1},\dots,\sigma_{k}\}}^{\{\varepsilon_{1},\dots,\varepsilon_{n-k}\}}$.  

We distinguish three cases:
\begin{itemize}
	\item[(a)] $c_q\geq 1\;\;\text{and}\;\; d_q\geq 1$,
	\item[(b)] $c_{q}=n-q\;\; \text{and}\;\; d_{q}=0$,
	\item[(c)] $c_q=0,\;\; \text{and}\;\; d_{q}=n-q$,
\end{itemize} 
where $c_q$ and $d_q$ are defined in \eqref{cq} and \eqref{dq} and satisfy $c_q+d_q=n-q$.
\subsection{Case (a): {\boldmath $c_q+d_q=n-q$}, {\boldmath $c_q \geq 1$}  and {\boldmath $d_q \geq 1$} }
\begin{lemma}\label{indices}
	Let $q\in\{1,\dots,n-1\}$ be fixed. If $c_q+d_q=n-q$, $c_q\geq 1$ and $d_q\geq 1$, then the indices 	$z=k-(j-1)$ and $h=n-k-(r-1)$ are such that  \[\mu_{z}^{q}+q+\eta=n-1,\;\;\text{and}\;\; \rho_{h}^{q}+q+\gamma=n-1, \]
	with $j$ and $r$ defined in \eqref{j} and \eqref{r}, $\eta$ and $\gamma$ defined in \eqref{Ec::eta} and \eqref{Ec::gamma}.
\end{lemma}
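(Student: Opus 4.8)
The plan is to prove the statement by directly tracking indices through the explicit definitions of $\mu_i^q$, $\rho_i^q$, $\eta$ and $\gamma$; no machinery beyond unwinding notation is needed. The two assertions are mirror images of one another (the first lives at the endpoint $a$ and governs the $\sigma$/$\tau$ data, the second at $b$ and governs the $\varepsilon$/$\delta$ data), so I would prove the identity $\mu_{z}^{q}+q+\eta=n-1$ in full and then obtain $\rho_{h}^{q}+q+\gamma=n-1$ by the identical argument with $(\sigma,k,\eta,j,z)$ replaced by $(\varepsilon,n-k,\gamma,r,h)$.

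First I would check that the hypotheses place us in the correct branch of the piecewise definitions. Unwinding \eqref{cq}, an element $\sigma_i-q$ lies in $\{0,\dots,n-q-1\}$ exactly when $q\le\sigma_i\le n-1$, i.e.\ exactly when $i\in A_q$; since $\sigma_i\le n-1$ always holds, this gives $c_q=|A_q|$, and likewise $d_q=|B_q|$ from \eqref{dq}. Hence the assumptions $c_q\ge 1$ and $d_q\ge 1$ are precisely $A_q\neq\varnothing$ and $B_q\neq\varnothing$, so $j=\min A_q$ and $r=\min B_q$ are well defined and the non-empty branches of the definitions of $\mu_i^q$ and $\rho_i^q$ apply.

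Next I would read off the relevant entry. In the branch $A_q\neq\varnothing$ the list $\mu_1^q,\dots,\mu_k^q$ starts $\mu_1^q=\sigma_j-q,\ \mu_2^q=\sigma_{j+1}-q,\dots$, so incrementing the $\sigma$-subscript from $j$ up to $k$ places $\sigma_k-q$ in position $1+(k-j)=k-(j-1)=z$; that is, $\mu_z^q=\sigma_k-q$, and note $\sigma_k-q\ge\sigma_j-q\ge0$, so this is a legitimate index. Finally I would invoke Remark~\ref{ref1}, which gives $\eta=n-1-\sigma_k$, and combine: $\mu_z^q+q+\eta=(\sigma_k-q)+q+(n-1-\sigma_k)=n-1$. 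The analogous reading at the $b$-endpoint gives $\rho_h^q=\varepsilon_{n-k}-q$ with $h=n-k-(r-1)$, and Remark~\ref{ref1} supplies $\gamma=n-1-\varepsilon_{n-k}$, whence $\rho_h^q+q+\gamma=n-1$.

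There is no genuine obstacle here: the content is entirely the observation that the last original $a$-condition $\sigma_k$ is carried by $\phi^q$ to the slot $z=k-(j-1)$ after subtracting $q$, together with the already-recorded identity $\eta=n-1-\sigma_k$ from Remark~\ref{ref1}. The only point that needs a (one-line) argument rather than pure substitution is the equivalence $c_q\ge 1\iff A_q\neq\varnothing$ (and its $d_q$ counterpart), which is what guarantees we are not in the degenerate branch $A_q=\varnothing$, where the formula for $\mu_z^q$ would instead read $\sigma_i-q+n$ and the claimed identity would fail.
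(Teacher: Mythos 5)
Your proposal is correct and follows essentially the same route as the paper's proof: identify $\mu_z^q=\sigma_k-q$ and $\rho_h^q=\varepsilon_{n-k}-q$ from the non-degenerate branches of the definitions (justified by $c_q\geq 1$, $d_q\geq 1$), then substitute $\eta=n-1-\sigma_k$ and $\gamma=n-1-\varepsilon_{n-k}$ from Remark~\ref{ref1}. Your write-up is in fact slightly more careful than the paper's, since you make explicit the equivalences $c_q\geq 1\iff A_q\neq\varnothing$ and $d_q\geq 1\iff B_q\neq\varnothing$, which the paper uses implicitly.
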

\begin{proof}
	By the  definition of $\mu_{z}^{q}=\mu_{k-(j-1)}^{q}$ and $\rho_{h}^{q}=\rho_{n-k-(r-1)}^{q}$ we have that since $c_{q}\geq 1$, $\mu_{z}^{q}=\sigma_{k}-q\geq 0$ and since $d_{q}\geq 1$, $\rho_{h}^{q}=\varepsilon_{n-k}-q\geq 0$. Using Remark \ref{ref1}, we deduce that 
	$$\mu_{z}^{q}+q+\eta=\sigma_{k}-q+q+n-1-\sigma_{k}=n-1,$$
	and 
	$$\rho_{h}^{q}+q+\gamma=\varepsilon_{n-k}-q+q+n-1-\varepsilon_{n-k}=n-1.$$
\end{proof}
\begin{lemma}\label{pandz}
	Let $q\in\{1,\dots,n-1\}$ be fixed. If $c_q+d_q=n-q$, $c_q\geq 1$ and $d_q\geq 1$, then $z=p$ and $h=l$ with $z$ and $h$ defined in Lemma \ref{indices}, $p$ and $l$ defined in \eqref{p} and \eqref{l}.
\end{lemma}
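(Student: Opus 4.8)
The plan is to reinterpret each of the four indices $z$, $h$, $p$, $l$ as the cardinality of an explicit subset of $\{0,\dots,n-1\}$ and then match them using the complementarity relations that define the adjoint indices $\tau_i$ and $\delta_i$. First I would record that, because the $\sigma_i$ are strictly increasing, the set $A_q=\{i:\sigma_i\ge q\}$ is the upper tail $\{j,j+1,\dots,k\}$, so its cardinality is $k-(j-1)=z$. Comparing with the definition \eqref{cq} of $c_q$ — where $\sigma_i-q$ lands in $\{0,\dots,n-q-1\}$ exactly when $q\le\sigma_i\le n-1$, i.e. precisely when $\sigma_i\ge q$ — this gives $z=c_q$. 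The symmetric argument with $\varepsilon_i$, $B_q$ and \eqref{dq} yields $h=d_q$. Note that the standing hypotheses $c_q\ge1$ and $d_q\ge1$ are exactly what guarantee $A_q,B_q\neq\varnothing$, so that $j,r$, and hence $z,h$, are well defined.

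Next I would express $p$ and $l$ in the same language. Since the $\delta_i$ increase, $D_q=\{i:\delta_i+q\le n-1\}=\{1,\dots,p\}$, so $p$ is simply the number of indices $i$ with $\delta_i\le n-1-q$, equivalently with $n-1-\delta_i\ge q$; likewise $l$ counts the indices with $n-1-\tau_i\ge q$. The crucial input is the pair of complementarity relations used in the definition of the adjoint space, namely that $\{n-1-\delta_1,\dots,n-1-\delta_k\}$ is the complement of $\{\varepsilon_1,\dots,\varepsilon_{n-k}\}$ in $\{0,\dots,n-1\}$, and $\{n-1-\tau_1,\dots,n-1-\tau_{n-k}\}$ is the complement of $\{\sigma_1,\dots,\sigma_k\}$.

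Finally I would count. Among the $n-q$ integers of $\{0,\dots,n-1\}$ that are $\ge q$, exactly $d_q$ are of the form $\varepsilon_j$; the remaining $(n-q)-d_q$ must therefore be of the form $n-1-\delta_i$, which is precisely the count defining $p$. Hence $p=(n-q)-d_q$, and since $c_q+d_q=n-q$ this equals $c_q=z$, giving $z=p$. The mirror count — $l$ equals the number of integers $\ge q$ that are not some $\sigma_j$, i.e. $(n-q)-c_q=d_q=h$ — gives $h=l$. The only real care needed is bookkeeping: keeping the equivalence $\delta_i\le n-1-q\Leftrightarrow n-1-\delta_i\ge q$ straight and verifying that the counts are exact with no off-by-one error, which is exactly where the partition of $\{0,\dots,n-1\}$ into the two complementary index families does all the work.
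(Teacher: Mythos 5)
Your proof is correct and follows essentially the same route as the paper: identify $z=|A_q|=c_q$ and $h=|B_q|=d_q$ as cardinalities, then use the complementarity relations defining the adjoint indices to count that $p=(n-q)-d_q$ and $l=(n-q)-c_q$, and conclude via $c_q+d_q=n-q$. The only difference is cosmetic — you spell out the mirror count for $h=l$, which the paper dismisses as "analogous."
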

\begin{proof}
	First, note that $z=|A_{q}|$ is the cardinal of the set $A_{q}=\{i\in\{1,\dots,k\}\,/\;\; \sigma_{i}\ge q\}$ and $p=|D_{q}|$ is the cardinal of the set $D_{q}=\{i\in\{1,\dots,k\}\,/\;\; \delta_{i}+q\le n-1 \}$. Moreover, by \eqref{cq}, it is clear that $c_{q}$ corresponds with the number of $\sigma$'s that are equal or greater than $q$, that is, $z=c_{q}$.
	
	On the other hand, we have that 
	\[\{\varepsilon_1,\dots,\varepsilon_{n-k},n-1-\delta_{k},\dots,\,n-1-\delta_{1}\}\equiv\{0,\dots,n-1\}.\]
	Among these $n$ elements (all the numbers between $0$ and $n-1$) there are $n-q$ elements that are equal or greater than $q$. By \eqref{dq}, we have that the number of $\varepsilon$'s that are equal or greater than $q$ is $d_{q}$. Therefore, the number of $\delta$'s such that $n-1-\delta_{i}\geq q$ is $n-q-d_{q}$, that is, $p=n-q-d_{q}$.
	Since, $c_{q}+d_{q}=n-q$, we have that $z=p$. In an analogous way, we can prove that $h=l$.
\end{proof}
\begin{remark}
	It should be noted that in the previous proof, since $j$ and $r$ depend  on $q$, we have that $z$ and $h$ also depend on $q$, but we will omit such dependence in the notation for the sake of simplicity. 
\end{remark}
\begin{remark}
	Indices $z$ and $h$ depend on the space $X_{\{\mu_{1}^{q},\dots,\mu_{k}^{q}\}}^{\{\rho_{1}^{q},\dots,\rho_{n-k}^{q}\}}$ and are unique. Indeed, for the rest of the components of space $X_{\{\mu_{1}^{q},\dots,\mu_{k}^{q}\}}^{\{\rho_{1}^{q},\dots,\rho_{n-k}^{q}\}}$ we have that $(\mu_{i}^{q}+q+\eta)\bmod{n}<n-1$ for $i\neq z$ with $i\in\{1,\dots,k\}$ and $(\rho_{l}^{q}+q+\gamma)\bmod{n}<n-1$ for $l\neq h$ with $l\in\{1,\dots,n-k\}$.
\end{remark}
Now, we present some results that provide sufficient conditions to ensure the constant sign of solutions of \eqref{Ec::T_n[M]} in the spaces $X_{\{\mu_{1}^{q},\dots,\mu_{k}^{q}\}}^{\{\rho_{1}^{q},\dots,\rho_{n-k}^{q}\}}$. Using Lemma \ref{lemma::1}, these results can be proved analogously to \cite[Proposition 6.7]{CabSaab} and  \cite[Proposition 6.9]{CabSaab}, respectively, and we omit them.

Let us consider the following spaces 
\begin{align}
	\label{space2}  X_{2}&:=\left\lbrace \begin{array}{cc}
		X_{\{\mu_ {1}^{q},\,\dots,\,\mu_{z-1}^{q},\,\mu_{z+1}^{q},\,\dots,\,\mu_{k}^{q}\}}^{\{\rho_{1}^{q},\,\dots,\,\rho_{n-k}^{q}|\beta^{q}\}}	\,,& \text{if $z\neq 1$,}\\&\\
		X_{\{\mu_ {2}^{q},\,\dots,\,\mu_{k}^{q}\}}^{\{\rho_{1}^{q},\,\dots,\,\rho_{n-k}^{q}|\beta^{q}\}}\,,& \text{if $z=1$,}\end{array}\right.\\ 	
	\label{space3} X_{3}&:=\left\lbrace \begin{array}{cc}
		X_{\{\mu_ {1}^{q},\,\dots,\,\mu_{z-1}^{q},\,\mu_{z+1}^{q},\,\dots,\mu_{k}^{q}|\alpha^{q} \}}^{\{\rho_{1}^{q},\,\dots,\, \rho_{n-k}^{q}\}}\,,& \text{if $z\neq 1$,}\\&\\
		X_{\{\mu_ {2}^{q},\,\dots,\,\mu_{k}^{q}|\alpha^{q}\}}^{\{\rho_{1}^{q},\,\dots,\,\rho_{n-k}^{q}\}}\,,& \text{if $z=1$,}\end{array}\right. 	
\end{align}
\begin{equation}\label{space22}
	X_{4}:=\left\lbrace \begin{array}{cc}
		X_{\{\mu_ {1}^{q},\,\dots,\,\mu_{k}^{q}|\alpha^{q}\}}^{\{\rho_{1}^{q},\,\dots,\,\rho_{h-1}^{q},\,\rho_{h+1}^{q},\,\dots,\,\rho_{n-k}^{q}\}}	\,,& \text{if $h\neq 1$,}\\&\\
		X_{\{\mu_ {1}^{q},\,\dots,\,\mu_{k}^{q}|\alpha^{q}\}}^{\{\rho_{2}^{q},\,\dots,\,\rho_{n-k}^{q}\}}\,,& \text{if $h=1$,}\end{array}\right. 
\end{equation}
and
\begin{equation} 
	\label{space33}
	X_{5}:=\left\lbrace \begin{array}{cc}
		X_{\{\mu_ {1}^{q},\,\dots,\,\mu_{k}^{q} \}}^{\{\rho_{1}^{q},\,\dots,\,\rho_{h-1}^{q},\, \rho_{h+1}^{q},\,\dots,\, \rho_{n-k}^{q}|\beta^{q}\}}\,,& \text{if $h\neq 1$,}\\&\\
		X_{\{\mu_ {1}^{q},\,\dots,\,\mu_{k}^{q}\}}^{\{\rho_{2}^{q},\,\dots,\,\rho_{n-k}^{q}|\beta^{q}\}}\,,& \text{if $h=1$,}\end{array}\right. 	
\end{equation}
where $z$ and $h$ are defined in Lemma \ref{indices}.

\begin{proposition}
	\label{P::1}
	Let $q\in \{1,\dots,n-1\}$. Suppose that ${\{\sigma_1,\dots,\sigma_k\}}-{\{\varepsilon_1,\dots,\varepsilon_{n-k}\}}$ satisfies condition $(N_a)$, $c_q+d_q=n-q$, $c_{q}\geq 1$ and $d_{q}\geq 1$. If $u\in C^n(I)$ is a solution of \eqref{Ec::T_n[M]} on $(a,b)$, satisfying the boundary conditions:
	\begin{eqnarray}
		\label{Ec::sigma1} u^{(\mu_ {1}^{q})}(a)=\cdots=u^{(\mu_{z-1}^{q})}(a)=u^{(\mu_{z+1}^{q})}(a)=&\cdots&=u^{(\mu_{k}^{q})}(a)\hspace{0.3cm}=0\,,\\
		\label{Ec::epsilon1} u^{(\rho_{1}^{q})}(b)=&\cdots&=u^{(\rho_{n-k}^{q})}(b)=0\,,
	\end{eqnarray}
	then it does not have any zero on $(a,b)$ provided that one of the following assertions is satisfied:
	\begin{itemize}
		\item Let $n-k$ be even:
		\begin{itemize}
			\item If $k>1$, $\mu_{z}^{q}\neq z-1$ and  $M\in[\lambda_3^{q},\lambda_2^{q}]$, where:
			\begin{itemize}
				\item $\lambda_3^{q}<0$ is the biggest negative eigenvalue of $T_{n}[0]$ in $X_{3}$.
				\item $\lambda_2^{q}>0$ is the least positive eigenvalue of $T_{n}[0]$ in $X_{2}$.
			\end{itemize}
			\item If $k>1$, $\mu_{z}^{q}=z-1$ and  $M\in[\lambda_{1},\lambda_2^{q}]$, where:
			\begin{itemize}
				\item $\lambda_{1}<0$ is the biggest negative eigenvalue of $T_{n}[0]$ in $X_{\{\sigma_{1},\dots,\sigma_{k}\}}^{\{\varepsilon_1,\dots,\varepsilon_{n-k}\}}.$
				\item $\lambda_2^{q}>0$ is the least positive eigenvalue of $T_{n}[0]$ in $X_{2}$.
			\end{itemize}
			\item If $k=1$, $\mu_{1}^{q}\neq 0$ and $M\in[\lambda_3^{q},+\infty)$, where:
			\begin{itemize}
				\item $\lambda_3^{q}<0$ is the biggest negative eigenvalue of $T_n[0]$ in $X_{\{\alpha^{q}\}}^{\{\rho_{1}^{q},\dots,\rho_{n-1}^{q}\}}$, where $\alpha^{q}=0$.
			\end{itemize}
			\item If $k=1$, $\mu_{1}^{q}= 0$ and $M\in[\lambda_{1},+\infty)$, where:
			\begin{itemize}
				\item $\lambda_{1}<0$ is the biggest negative eigenvalue of $T_{n}[0]$ in $X_{\{\sigma_{1}\}}^{\{\varepsilon_1,\dots,\varepsilon_{n-1}\}}.$
			\end{itemize}
		\end{itemize}
		\item Let $n-k$ be  odd:
		\begin{itemize}
			\item If $k>1$, $\mu_{z}^{q}\neq z-1$ and  $M\in[\lambda_2^{q},\lambda_3^{q}]$, where:
			\begin{itemize}
				\item $\lambda_3^{q}>0$ is the least positive eigenvalue of $T_{n}[0]$ in $X_{3}$.
				\item $\lambda_2^{q}<0$ is the biggest negative eigenvalue of $T_{n}[0]$ in $X_{2}$.
			\end{itemize}
			\item If $k>1$, $\mu_{z}^{q}= z-1$ and  $M\in[\lambda_2^{q},\lambda_{1}]$, where:
			\begin{itemize}
				\item $\lambda_{1}>0$ is the least positive eigenvalue of $T_{n}[0]$ in $X_{\{\sigma_{1},\dots,\sigma_{k}\}}^{\{\varepsilon_1,\dots,\varepsilon_{n-k}\}}.$.
				\item $\lambda_2^{q}<0$ is the biggest negative eigenvalue of $T_{n}[0]$ in $X_{2}$.
			\end{itemize}
			\item If $k=1$, $\mu_{1}^{q}\neq 0$ and $M\in(-\infty,\lambda_3^{q}]$, where:
			\begin{itemize}
				\item $\lambda_3^{q}>0$ is the least positive eigenvalue of $T_n[0]$ in $X_{\{\alpha^{q}\}}^{\{\rho_{1}^{q},\dots,\rho_{n-1}^{q}\}}$, where $\alpha^{q}=0$.
			\end{itemize}
			\item If $k=1$, $\mu_{1}^{q}=0$ and $M\in(-\infty,\lambda_1]$, where:
			\begin{itemize}
				\item $\lambda_1>0$ is the least positive eigenvalue of $T_n[0]$ in $X_{\{\sigma_{1}\}}^{\{\varepsilon_{1},\dots,\varepsilon_{n-1}\}}$.
			\end{itemize}
		\end{itemize}
	\end{itemize}
\end{proposition}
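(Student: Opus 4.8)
The plan is to treat, for the fixed $q$, the solution $u=u_M$ of \eqref{Ec::T_n[M]} subject to the $n-1$ homogeneous conditions \eqref{Ec::sigma1}--\eqref{Ec::epsilon1} as a one-parameter family and to control the number $N(M)$ of its zeros in $(a,b)$ as $M$ runs over the stated interval. Since \eqref{Ec::sigma1}--\eqref{Ec::epsilon1} impose $n-1$ conditions on the $n$-dimensional solution space of \eqref{Ec::T_n[M]}, for every $M$ that is not an eigenvalue of the auxiliary $n$-condition problems below the solution $u_M$ is unique up to a scalar; normalizing by prescribing the value of the lowest unconstrained derivative at $a$ makes $M\mapsto u_M$ continuous (indeed real-analytic). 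The goal is to show $N(M)=0$ throughout the interval, extending by continuity in $M$ the constant-sign picture known at $M=0$.

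I would anchor the count at $M=0$, which lies in the interior of every interval in the statement, since in each case the lower endpoint is negative (possibly $-\infty$) and the upper endpoint is positive (possibly $+\infty$). At $M=0$ the operator $T_n[0]=d^n/dt^n$ is disconjugate, and, because $c_q+d_q=n-q$ with $c_q,d_q\ge 1$ forces the reduced index sets \eqref{cq}--\eqref{dq} to satisfy $(N_a)$, the derivative $v_s^q$ is the Green's function of the lower-order operator $T_{n-q}[0]$ on a space satisfying $(N_a)$; by Theorem~\ref{resultSigno}(a) it is strongly of one sign in $t$. This one-signedness is the $M=0$ instance of the desired conclusion, and for the free solution $u_0$ it follows from the disconjugacy of $T_n[0]$ together with the $(N_a)$ structure of the boundary orders, giving $N(0)=0$.

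The heart of the argument is to show that $N$ can change only when $M$ meets an eigenvalue of $X_2$, $X_3$, or the original space. By continuity of $M\mapsto u_M$, an interior zero is created or destroyed only by (i) reaching the endpoint $b$, (ii) reaching the endpoint $a$, or (iii) colliding with another interior zero. In case (i) the merging zero forces the lowest unconstrained derivative at $b$, namely $u^{(\beta^q)}(b)=0$ with $\beta^q$ as in \eqref{betaq}, so that $u_M\in X_2$ and $M$ is an eigenvalue of $T_n[0]$ in $X_2$. In case (ii) the merging zero forces the lowest unconstrained derivative of the reduced problem at $a$ to vanish; since $\mu_z^q=z-1$ holds precisely when $0,1,\dots,z-1$ all belong to $\{\mu_1^q,\dots,\mu_k^q\}$, this order equals $\alpha^q$ (see \eqref{alphaq}) when $\mu_z^q\neq z-1$, placing $u_M\in X_3$, and equals $\mu_z^q$ when $\mu_z^q=z-1$, restoring the full set of $a$-conditions and placing $u_M$ in the derivative space $X_{\{\mu_1^q,\dots,\mu_k^q\}}^{\{\rho_1^q,\dots,\rho_{n-k}^q\}}$, whose nonzero spectrum coincides with that of $X_{\{\sigma_1,\dots,\sigma_k\}}^{\{\varepsilon_1,\dots,\varepsilon_{n-k}\}}$ by Lemma~\ref{spectrum}. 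This is exactly the dichotomy producing $\lambda_3^q$ versus $\lambda_1$ in the statement, and Lemmas~\ref{indices} and~\ref{pandz}, which single out the distinguished index $z$ and the capping relation $\mu_z^q+q+\eta=n-1$, guarantee that $z$ is well defined and unique, so the dichotomy is unambiguous. Case (iii), an interior double zero, is the delicate point: I would exclude it for $M$ in the interval by combining the disconjugacy of $T_n[M]$ there with the monotonicity of the derivatives of the Green's function in $M$ recorded in Lemma~\ref{lemma:monotonia_derivadas}.

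Granting this transition analysis, the conclusion follows by moving $M$ away from $0$. For $n-k$ even, increasing $M$ the first admissible transition occurs at the least positive eigenvalue of $X_2$, namely $\lambda_2^q$, while decreasing $M$ it occurs at the biggest negative eigenvalue of $X_3$ (resp. of the original space when $\mu_z^q=z-1$), namely $\lambda_3^q$ (resp. $\lambda_1$); the monotonicity of Lemma~\ref{lemma:monotonia_derivadas} is what ties the \emph{direction} of the move to whether the escaping zero exits through $b$ or through $a$, so that the up-transitions are governed by $X_2$ and the down-transitions by $X_3$. Hence $N(M)=0$ on the open interval, and passing to the closed interval is harmless because at the endpoints the extra zero sits on the boundary, not in $(a,b)$. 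When $k=1$ the removed condition is the only one at $a$, so the auxiliary problem on that side degenerates into the one-sided space $X_{\{\alpha^q\}}^{\{\rho_1^q,\dots,\rho_{n-1}^q\}}$ (resp. $X_{\{\sigma_1\}}^{\{\varepsilon_1,\dots,\varepsilon_{n-1}\}}$) and the interval becomes a half-line. The case $n-k$ odd is symmetric, with the signs reversed and the roles of $X_2$ and $X_3$ relative to the direction of $M$ interchanged. The main obstacle throughout is the rigorous exclusion of interior double zeros in case (iii) and the precise identification of the lowest unconstrained derivative in case (ii); Lemmas~\ref{indices} and~\ref{pandz} are exactly what make that identification clean.
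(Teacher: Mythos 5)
Your continuation-in-$M$ scheme is a genuinely different route from the paper, which does not prove Proposition~\ref{P::1} directly at all: it invokes Remark~\ref{lemma::1} and defers to the proofs of Propositions 6.7 and 6.9 of \cite{CabSaab}, which run by contradiction through the $(T_d)$ decomposition and spectral comparison arguments. Your bookkeeping of boundary transitions is essentially right: the Rolle iteration at $b$ (using $\{0,\dots,\beta^{q}-1\}\subset\{\rho_1^q,\dots,\rho_{n-k}^q\}$) correctly identifies exits through $b$ with eigenvalues of $X_2$, and the dichotomy $\mu_z^q\neq z-1$ versus $\mu_z^q=z-1$ correctly routes exits through $a$ to $X_3$ or, via Lemma~\ref{spectrum}, to $\lambda_1$. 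The problem is that the three load-bearing steps you flag are not actually carried, and the tools you propose for them do not work.

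First, case (iii) (creation or destruction of a pair of interior zeros at a tangency) is the crux, and neither proposed tool applies. Disconjugacy of $T_n[M]$ is not available on the stated intervals: disconjugacy holds only on an interval of parameters characterized in \cite{Disconju,CabSaab}, and the intervals $[\lambda_3^q,\lambda_2^q]$, $[\lambda_1,\lambda_2^q]$, etc.\ are in general \emph{not} contained in it --- the whole point of the spectral characterization is that constant-sign phenomena persist beyond disconjugacy. Moreover, even where $T_n[M]$ is disconjugate, disconjugacy bounds zeros counted with Hermite multiplicity (consecutive derivatives vanishing from order $0$), and the conditions \eqref{Ec::sigma1}--\eqref{Ec::epsilon1} at non-consecutive orders are not zeros in that sense, so no zero count follows. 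As for Lemma~\ref{lemma:monotonia_derivadas}, it concerns $\frac{\partial^{q}}{\partial t^{q}}g_M$ and is \emph{conditional} on that derivative already having constant sign for some $M$ --- which is exactly what Theorem~\ref{T::IP}, whose proof consumes Proposition~\ref{P::1}, is meant to establish; invoking it here for the free solution $u_M$ (which is not a section of $g_M$) is both a non sequitur and circular. Second, even granting that transitions occur only at eigenvalues of $X_2$, $X_3$ or the original space, the interior of the stated interval may contain eigenvalues of these spaces of the ``wrong'' sign (e.g.\ a negative eigenvalue of $X_2$ in $(\lambda_3^q,0)$): the proposition only excludes the \emph{first} positive eigenvalue of $X_2$ and the \emph{first} negative eigenvalue of $X_3$ from the interior. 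Your directional claim --- that increasing $M$ can only push zeros out through $b$ and decreasing $M$ only through $a$ --- is what would close this hole, but it is asserted via the same inapplicable monotonicity lemma. Third, the anchor $N(0)=0$ is not a consequence of ``disconjugacy of $T_n[0]$ together with $(N_a)$'' for the same multiplicity reason as above; Theorem~\ref{resultSigno} gives the sign of the Green's-function derivative $v_s^q$, not of an arbitrary solution of the $(n-1)$-condition problem, and linking the two requires the uniqueness-plus-trace argument that the paper carries out only inside the proof of Theorem~\ref{T::IP}. Until these three points are supplied by arguments of the type in \cite[Section 6]{CabSaab}, the proposal is a plausible strategy rather than a proof.
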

\begin{proposition}
	\label{P::2}
	Let $q\in \{1,\dots,n-1\}$. Suppose that ${\{\sigma_1,\dots,\sigma_k\}}-{\{\varepsilon_1,\dots,\varepsilon_{n-k}\}}$ satisfies condition $(N_a)$, $c_q+d_q=n-q$, $c_{q}\geq 1$ and $d_{q}\geq 1$.  If $u\in C^n(I)$ is a  solution of \eqref{Ec::T_n[M]} on $(a,b)$ satisfying the boundary conditions:
	\begin{eqnarray}
		\label{Ec::sigma2} u^{(\mu_{1}^{q})}(a)=&\cdots&=u^{(\mu_{k}^{q})}(a)\hspace{0.3cm}=0\,,\\
		\label{Ec::epsilon2} u^{(\rho_{1}^{q})}(b)=&\cdots&=u^{(\rho_{h-1}^{q})}(b)=u^{(\rho_{h+1}^{q})}(b)=\cdots=u^{(\rho_{n-k}^{q})}(b)=0\,,
	\end{eqnarray}
	then it does not have any zero on $(a,b)$ provided that one of the following assertions is satisfied:
	\begin{itemize}
		\item Let $n-k$ be even:
		\begin{itemize}
			\item If $\rho_{h}^{q}\neq h-1$ and   $M\in[\lambda_5^{q},\lambda_4^{q}]$, where:
			\begin{itemize}
				\item $\lambda_5^{q}<0$ is the biggest negative eigenvalue of $T_{n}[0]$ in $X_{5}$.
				\item $\lambda_4^{q}>0$ is the least positive eigenvalue of $T_{n}[0]$ in $X_{4}.$	
			\end{itemize}
			
			\item If $\rho_{h}^{q}=h-1$ and   $M\in[\lambda_{1},\lambda_4^{q}]$, where:
			\begin{itemize}
				\item $\lambda_{1}<0$ is the biggest negative eigenvalue of $T_{n}[0]$ in $X_{\{\sigma_1,\dots,\sigma_{k}\}}^{\{\varepsilon_1,\dots,\varepsilon_{n-k}\}}.$ 
				\item $\lambda_4^{q}>0$ is the least positive eigenvalue of $T_{n}[0]$ in $X_{4}$.
			\end{itemize}

		\end{itemize}
		
		\item Let $n-k$ be odd:
		\begin{itemize}
			\item If $k<n-1$, $\rho_{h}^{q}\neq h-1$ and $M\in[\lambda_4^{q},\lambda_5^{q}]$, where:
			\begin{itemize}
				\item $\lambda_5^{q}>0$ is the least positive eigenvalue of $T_{n}[0]$ in $X_{5}$.
				\item $\lambda_4^{q}<0$ is the biggest negative eigenvalue of $T_{n}[0]$ in  $X_{4}$.
			\end{itemize}
			
			\item If $k<n-1$, $\rho_{h}^{q}= h-1$ and $M\in[\lambda_4^{q},\lambda_{1}]$, where:
			\begin{itemize}
				\item $\lambda_{1}>0$ is the least positive eigenvalue of $T_{n}[0]$ in $X_{\{\sigma_1,\dots,\sigma_{k}\}}^{\{\varepsilon_1,\dots,\varepsilon_{n-k}\}}.$
				\item $\lambda_4^{q}<0$ is the biggest negative eigenvalue of $T_{n}[0]$ in  $X_{4}$.
			\end{itemize}
			
			\item If $k=n-1$, $\rho_{1}^{q}\neq 0$ and $M\in(-\infty,\lambda_5^{q}]$, where:
			\begin{itemize}
				\item $\lambda_5^{q}>0$ is the least positive eigenvalue of $T_{n}[0]$ in $X_{\{\mu_{1}^{q},\dots,\mu_{n-1}^{q}\}}^{\{\beta^{q}\}}$, where $\beta^{q}=0$.
			\end{itemize}
			\item If $k=n-1$, $\rho_{1}^{q}=0$ and $M\in(-\infty,\lambda_{1}]$, where:	
			\begin{itemize}
				\item $\lambda_{1}>0$ is the least positive eigenvalue of $T_{n}[0]$ in $X_{\{\sigma_1,\dots,\sigma_{n-1}\}}^{\{\varepsilon_{1}\}}$.
			\end{itemize}	
			
		\end{itemize}
	\end{itemize}
\end{proposition}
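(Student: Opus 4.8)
The plan is to study the one-parameter family of solutions $u=u_M$ of \eqref{Ec::T_n[M]} subject to the $n-1$ boundary conditions \eqref{Ec::sigma2}--\eqref{Ec::epsilon2}, and to locate precisely the set of parameters $M$ for which $u_M$ has no zero in $(a,b)$. Since \eqref{Ec::sigma2}--\eqref{Ec::epsilon2} consist of $k$ conditions at $a$ and $n-k-1$ conditions at $b$, that is, $n-1$ homogeneous linear conditions on the $n$-dimensional solution space of \eqref{Ec::T_n[M]}, for every $M$ that is not an eigenvalue of one of the completed problems the solution $u_M$ is unique up to a multiplicative constant and depends continuously on $M$. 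First I would fix such a normalization and record, via Lemma \ref{lemma::1}, that $T_n[0]$ satisfies property $(T_d)$ in $X_{\{\mu_1^q,\dots,\mu_k^q\}}^{\{\rho_1^q,\dots,\rho_{n-k}^q\}}$, so that the zero-counting machinery available for $u^{(n)}$ in \cite{CabSaab} transfers verbatim.

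The core of the argument is to track how zeros of $u_M$ enter the open interval $(a,b)$ as $M$ varies. Condition $(N_a)$ together with property $(T_d)$ bounds the total number of zeros a nontrivial solution may carry, counting the prescribed vanishing derivatives at $a$ and $b$; the $n-1$ conditions \eqref{Ec::sigma2}--\eqref{Ec::epsilon2} already exhaust all but one of the admissible count, so a nontrivial interior zero can appear only by migrating continuously from an endpoint. When a zero reaches $a$, the conditions $u^{(\mu_1^q)}(a)=\dots=u^{(\mu_k^q)}(a)=0$ are supplemented in the limit by $u^{(\alpha^q)}(a)=0$, with $\alpha^q$ from \eqref{alphaq} the smallest order not already prescribed at $a$; this places $u_M$ in the completed space $X_4$ of \eqref{space22}, so that such an $M$ is an eigenvalue of $T_n[0]$ in $X_4$. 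Symmetrically, a zero reaching $b$ forces the supplementary condition $u^{(\beta^q)}(b)=0$ and places $u_M$ in $X_5$ of \eqref{space33}, \emph{unless} the dropped index satisfies $\rho_h^q=h-1$, in which case re-imposing $u^{(\rho_h^q)}(b)=0$ simply recovers the original space $X_{\{\sigma_1,\dots,\sigma_k\}}^{\{\varepsilon_1,\dots,\varepsilon_{n-k}\}}$ and the relevant threshold becomes its eigenvalue $\lambda_1$. This dichotomy is exactly the case split $\rho_h^q\neq h-1$ versus $\rho_h^q=h-1$ in the statement, and the role of the distinguished index $h$, together with the identity $\rho_h^q+q+\gamma=n-1$, is pinned down by Lemmas \ref{indices} and \ref{pandz}.

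Having identified the only admissible thresholds as the eigenvalues of $X_4$, $X_5$ and (in the degenerate case) of the original space, I would then fix the sign of $u_M$ and select, among these thresholds, the pair delimiting the maximal interval free of interior zeros. For this I would evaluate the situation at $M=0$: by Theorem \ref{signo} the sign of the associated Green's function is governed by the parity of $n-k$, and the monotonicity recorded in Lemma \ref{lemma:monotonia_derivadas} shows that, once an endpoint zero disappears, it cannot reappear while $M$ moves monotonically. When $n-k$ is even this yields the interval running from the biggest negative eigenvalue of $X_5$ (or from $\lambda_1$) up to the least positive eigenvalue $\lambda_4^q$ of $X_4$; when $n-k$ is odd the orientation of the sign reverses, so $X_4$ now supplies the biggest negative threshold $\lambda_4^q$ and the interval runs up to the least positive eigenvalue of $X_5$ (or to $\lambda_1$). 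The boundary subcases $k=n-1$ produce a one-sided (unbounded) interval, since one of the completed problems then has no finite delimiting eigenvalue on one side.

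The step I expect to be the main obstacle is the rigorous verification that interior zeros can be created only at the endpoints and that the limiting boundary behavior is precisely $u^{(\alpha^q)}(a)=0$ or $u^{(\beta^q)}(b)=0$: this is where the index bookkeeping of Lemmas \ref{indices} and \ref{pandz} and condition $(N_a)$ must be combined to guarantee that the completed index sets are admissible and that no multiplicity is lost in the limiting process. Once this migration-of-zeros picture is secured, the identification of the exact eigenvalues and the parity-dependent orientation follow from Theorem \ref{signo} and Lemma \ref{lemma:monotonia_derivadas} in the same way as in \cite[Proposition 6.9]{CabSaab}.
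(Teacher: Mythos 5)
Your proposal follows essentially the same route as the paper: the paper gives no independent proof of Proposition \ref{P::2}, stating only that, thanks to Lemma \ref{lemma::1} (property $(T_d)$ of $T_{n}[0]$ in the derivative space $X_{\{\mu_{1}^{q},\dots,\mu_{k}^{q}\}}^{\{\rho_{1}^{q},\dots,\rho_{n-k}^{q}\}}$), the result can be proved analogously to \cite[Proposition 6.9]{CabSaab} --- precisely the reduction you make. Your migration-of-zeros outline, with the completed spaces $X_{4}$, $X_{5}$ (or the original space when $\rho_{h}^{q}=h-1$) supplying the spectral thresholds and the parity of $n-k$ fixing the orientation via Theorem \ref{signo} and Lemma \ref{lemma:monotonia_derivadas}, is a faithful rendering of that cited argument, including the correct identification of the limiting boundary conditions $u^{(\alpha^{q})}(a)=0$ and $u^{(\beta^{q})}(b)=0$ and of the dichotomy $\rho_{h}^{q}=h-1$ versus $\rho_{h}^{q}\neq h-1$.
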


\begin{example}
	Let us consider the operator $T_{4}[M]$ with the boundary conditions:
	$$u'(0)=u'''(0)=u(1)=u'(1)=0.$$
	The hypotheses of Propositions \ref{P::1} and \ref{P::2} are satisfied only for the value $q=1$. Moreover, $z=2$, $h=1$, and $v_{s}^1[M]$ satisfies the conditions of the space $X_{\{0,2\}}^{\{0,3\}}$.
	
	From Proposition \ref{P::1}, we can affirm that any nontrivial solution of the problem 
	\begin{equation}\label{prob1}
		T_4[M]=0,\; t\in [0,1],\;\; u(0)=u(1)=u'''(1)=0\,,
	\end{equation}
	does not have any zero on $(0,1)$ for $M\in[\lambda_{3}^{1},\lambda_{2}^{1}]$, where $\lambda_3^{1}<0$ is the biggest negative eigenvalue of $T_{4}[0]$ in $X_{\{0,1\}}^{\{0,3\}}$ and $\lambda_2^{1}>0$ is the least positive eigenvalue of $T_{4}[0]$ in $X_{\{1\}}^{\{0,1,3\}}$.
	
	The eigenvalues of $T_{4}[0]$ in $X_{\{0,1\}}^{\{0,3\}}$ are given by $-\lambda^{4}$, where $\lambda $ is a positive solution of $\sin\left(\lambda\right)=0\,$. The smallest positive solution of this equation is $\pi$, so  $\lambda_{3}^{1}=-\pi^{4}$ is the biggest negative eigenvalue of $T_{4}[0]$ in $X_{\{0,1\}}^{\{0,3\}}$.
	
	Similarly, the eigenvalues of $T_{4}[0]$ in $X_{\{1\}}^{\{0,1,3\}}$ are given by $\lambda^{4}$, where $\lambda$ is a positive solution of $$\left(-1+ e^{\sqrt{2}\,m}\right)\, \cos\left(\frac{m}{\sqrt{2}}\right)+\left(1+e^{\sqrt{2}\,m}\right)\, \sin\left(\frac{m}{\sqrt{2}}\right)=0.$$
	
	Denoting by $m_{2}$ the smallest positive solution of this equation, we deduce that\linebreak $\lambda_{2}^{1}=m_{2}^{4}\approx 3.34^4$ is the least positive eigenvalue of $T_{4}[0]$ in $X_{\{0\}}^{\{0,1,3\}}$.
	
	It is easy to verify that the solutions of problem \eqref{prob1} are given as multiples of the following  expression:
	\begin{itemize}
		\item If $ M=-m^4<0$,
		\begin{equation*}\begin{split}
				u(t)= & \cosh(m-m\, t)\, \sin(m)- \sin(m\,t)- \cosh(m)\, \sin(m-m\,t)-\cos(m-m\, t)\, \sinh(m) \\ & + \sinh(m\,t)+ \cos(m)\, \sinh(m-m\,t).
		\end{split}\end{equation*}
		\item If $M=0$,
		\[u(t)=t^2-t. \]
		\item If $M=m^4>0$,
		\begin{equation*}\begin{split}
				u(t)=&\,  e^{-\frac{m\,t}{\sqrt{2}}}\left(e^{\sqrt{2}\,m}\left(-1+ e^{\sqrt{2}\,m\,t}\right)\, \cos\left(\frac{m(-2+t)}{\sqrt{2}}\right)-e^{\sqrt{2}\,m}\left(-1 + e^{\sqrt{2}\,m}\right)\, \cos\left(\frac{m\,t}{\sqrt{2}}\right)\right) \\
				& + e^{-\frac{m\,t}{\sqrt{2}}} \, \left(-1+e^{\sqrt{2}\,m}\right)\left(-e^{\sqrt{2}\,m}+e^{\sqrt{2}\,m\,t}\right)\, \sin\left(\frac{m\,t}{\sqrt{2}}\right).
		\end{split}\end{equation*}
	\end{itemize}
	Analogously, from Proposition \ref{P::2}, we conclude that any solution of the problem \begin{equation}\label{prob2}
		T_{4}[M]\,u(t)=0,\; t\in[0,1],\;\; u(0)=u''(0)=u'''(1)=0,
	\end{equation}
	does not have any zero on $(0,1)$ for $M\in[\lambda_{1},\lambda_{4}^{1}]$, where $\lambda_1<0$ is the biggest negative eigenvalue of $T_{4}[0]$ in $X_{\{1,3\}}^{\{0,1\}}$ and $\lambda_4^{1}>0$ is the least positive eigenvalue of $T_{4}[0]$ in $X_{\{0,1,2\}}^{\{3\}}$.
	
	The eigenvalues of $T_{4}[0]$ in $X_{\{1,3\}}^{\{0,1\}}$ are given by $-\lambda^{4}$, where $\lambda $ is a positive solution of $$\sin(\lambda) + \cos(\lambda)\, \tanh(\lambda)=0\,.$$
	Now, if $m_{1}$ is the smallest positive solution of this equation, then  $\lambda_{1}=-m_{1}^{4}\approx -2.36^4$ is the biggest negative eigenvalue of $T_{4}[0]$ in $X_{\{0,1\}}^{\{0,3\}}$.
	
	On the other hand, the eigenvalues of $T_{4}[0]$ in $X_{\{0,1,2\}}^{\{3\}}$ are given by $\lambda^{4}$, where $\lambda$ is a positive solution of $\cos\left(\frac{m}{\sqrt{2}}\right)=0.$ The smallest positive solution of this equation is $\frac{\sqrt{2}}{2}\pi$, so  $\lambda_{4}^{1}=\left(\frac{\sqrt{2}}{2}\pi\right)^{4}$ is the least positive eigenvalue of $T_{4}[0]$ in $X_{\{0,1,2\}}^{\{3\}}$.
	
	It is immediate to show that solutions of problem \eqref{prob2} are given as multiples of:	
	\begin{itemize}
		\item If $M=-m^4<0$,
		\[u(t)=\sin(m\, t)+ \cos(m)\, \operatorname{sech}(m) \sinh(m\, t).\]
		\item If $M=0$,
		\[u(t)=t.\]
		\item If $M=m^4>0$,
		\begin{equation*}\begin{split}
				u(t)= & \, e^{-\frac{m\,t}{\sqrt{2}}}\left(\left(-1+ e^{\sqrt{2}\,m\,(1+t)}\right)\, \cos\left(\frac{m(-1+t)}{\sqrt{2}}\right)+\left(-e^{\sqrt{2}\,m} + e^{\sqrt{2}\,m\,t}\right)\, \cos\left(\frac{m(1+t)}{\sqrt{2}}\right)\right) \\
				& + e^{-\frac{m\,t}{\sqrt{2}}} \left(\left(1+ e^{\sqrt{2}\,m\,(1+t)}\right)\, \sin\left(\frac{m(-1+t)}{\sqrt{2}}\right)+\left(e^{\sqrt{2}\,m} + e^{\sqrt{2}\,m\,t}\right)\, \sin\left(\frac{m(1+t)}{\sqrt{2}}\right)\right).
		\end{split}\end{equation*}
	\end{itemize}
\end{example}

\begin{remark}
	It should be noted that in the above propositions we have used the fact proven in Lemma \ref{spectrum} that the first nonzero eigenvalues of the spaces $X_{\{\sigma_1,\dots,\sigma_{k}\}}^{\{\varepsilon_1,\dots,\varepsilon_{n-k}\}}$ and $X_{\{\mu_{1}^{q},\dots,\mu_{k}^{q}\}}^{\{\rho_{1}^{q},\dots,\rho_{n-k}^{q}\}}$ coincide.
\end{remark}

Let us define the following spaces 
\begin{equation}
	\label{space2adjunto}  X_{2}^*:=\left\lbrace \begin{array}{cc}
		X_{\{\tau_{1},\,\dots,\,\tau_{n-k}|\eta\}}^{\{\delta_{1},\,\dots,\,\delta_{z-1},\,\delta_{z+1},\,\dots,\,\delta_{k}\}}	\,,& \text{if $z\neq 1$,}\\&\\
		X_{\{\tau_ {1},\,\dots,\,\tau_{n-k}|\eta\}}^{\{\delta_{2},\,\dots,\,\delta_{k}\}}\,,& \text{if $z=1$,}\end{array}\right.
\end{equation}
and 
\begin{equation}
	\label{space4adjunto} X_{4}^* :=\left\lbrace \begin{array}{cc}
		X_{\{\tau_ {1},\,\dots,\,\tau_{h-1},\,\tau_{h+1},\,\dots,\,\tau_{n-k}\}}^{\{\delta_{1},\,\dots,\,\delta_{k}|\gamma\}}	\,,& \text{if $h\neq 1$,}\\&\\
		X_{\{\tau_ {2},\,\dots,\,\tau_{n-k}\}}^{\{\delta_{1},\,\dots,\,\delta_{k}|\gamma\}}\,,& \text{if $h=1$.}\end{array}\right.
\end{equation}

\begin{lemma}\label{adjuntos1}
	$\lambda\neq 0$ is an eigenvalue of $T_{n}[0]$ in $X_{2}^{*}$ defined in \eqref{space2adjunto} if and only if $\lambda$ is an eigenvalue of $T_{n}[0]$ in  $X_{2}$. In particular, $(\lambda_2^*)^q=\lambda_{2}^q$ where $(\lambda_2^*)^{q}\neq 0$ is the least positive (biggest negative) eigenvalue of $T_{n}[0]$ in $X_{2}^*$  and $\lambda_{2}^q\neq 0$ is the least positive (biggest negative) eigenvalue of $T_{n}[0]$ in $X_{2}$.
\end{lemma}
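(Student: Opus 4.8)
The plan is to identify $X_2^*$ as the image under the derivative shift $\phi^q$ of the adjoint space of $X_2$, and then to chain the two spectral-invariance principles already at hand: that $T_n[0]$ has the same nonzero spectrum on a space and on its adjoint (a consequence of the transpose relation \eqref{gg2}--\eqref{Ec::gg1} between the scalar Green's functions), and that $\phi^q$ preserves the nonzero spectrum (Lemma \ref{spectrum}). The whole argument mirrors, for the modified spaces, the commutative square \eqref{diagrama}, in which $\phi^q$ intertwines the adjoints of $X_{\{\sigma_1,\dots,\sigma_k\}}^{\{\varepsilon_1,\dots,\varepsilon_{n-k}\}}$ and of $X_{\{\mu_1^q,\dots,\mu_k^q\}}^{\{\rho_1^q,\dots,\rho_{n-k}^q\}}$.

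First I would compute the adjoint of $X_2$ from the rule defining the adjoint space, namely that the orders imposed at each endpoint are replaced by $n-1$ minus the orders of their complement. Viewing $X_2$ as $X_{\{\mu_1^q,\dots,\mu_k^q\}}^{\{\rho_1^q,\dots,\rho_{n-k}^q\}}$ with $\mu_z^q$ deleted at $a$ and $\beta^q$ adjoined at $b$: deleting $\mu_z^q$ at $a$ enlarges the complement there, so the adjoint acquires the extra order $n-1-\mu_z^q$ at $a$, which by Lemma \ref{indices} equals $q+\eta$; adjoining $\beta^q$ at $b$ shrinks the complement there, so the adjoint loses one order at $b$. Thus $\mathrm{Adj}(X_2)$ is the adjoint $X_{\{\tau_1^q,\dots,\tau_{n-k}^q\}}^{\{\delta_1^q,\dots,\delta_k^q\}}$ of $X_{\{\mu_1^q,\dots,\mu_k^q\}}^{\{\rho_1^q,\dots,\rho_{n-k}^q\}}$, with $q+\eta$ inserted at $a$ and exactly one $\delta^q$-order removed at $b$; Lemma \ref{pandz} (the identity $z=p$) and Remark \ref{ref1} pin this removed order down as the one sitting in position $z$.

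Next I would push $\mathrm{Adj}(X_2)$ through $\phi^q$. By \eqref{diagrama} the shift carries $X_{\{\tau_1^q,\dots,\tau_{n-k}^q\}}^{\{\delta_1^q,\dots,\delta_k^q\}}$ onto $X_{\{\tau_1,\dots,\tau_{n-k}\}}^{\{\delta_1,\dots,\delta_k\}}$; tracking the two exceptional orders, $q+\eta$ descends to $\eta$ (no wrap-around occurs, since $q+\eta\ge q$, and $q+\eta\le n-1$ because $\mu_z^q=\sigma_k-q\ge 0$), while the removed order descends to $\delta_z$. Reading off the result in both branches $z\neq 1$ and $z=1$, one obtains precisely $X_2^*$ as written in \eqref{space2adjunto}, i.e.\ $\phi^q(\mathrm{Adj}(X_2))=X_2^*$.

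Finally I would assemble the spectral statement: $\lambda\neq 0$ is an eigenvalue of $T_n[0]$ in $X_2$ if and only if it is one in $\mathrm{Adj}(X_2)$, if and only if it is one in $\phi^q(\mathrm{Adj}(X_2))=X_2^*$; since the two nonzero spectra coincide as subsets of $\R$, their least positive (respectively biggest negative) elements are equal, which is the assertion $(\lambda_2^*)^q=\lambda_2^q$. The steps that are quoted — adjoint-invariance and $\phi^q$-invariance of the nonzero spectrum — are routine here; the genuine work, and the only place where something can go wrong, is the index bookkeeping of the previous two paragraphs, i.e.\ confirming that the single order created and the single order destroyed in passing to $\mathrm{Adj}(X_2)$ are exactly those created and destroyed in the definition of $X_2^*$, and that they land on $\eta$ and $\delta_z$ after the shift. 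This is precisely the content of Lemmas \ref{indices} and \ref{pandz} together with Remark \ref{ref1}, so the main obstacle is combinatorial rather than analytic.
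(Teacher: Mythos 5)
Your proposal is correct and follows essentially the same route as the paper's own proof: the paper likewise computes the adjoint of $X_{2}$ (obtaining $X_{\{\tau_{1}^{q},\dots,\tau_{n-k}^{q}|n-1-\sigma_{k}+q\}}^{\{\delta_{1}^{q},\dots,\delta_{k-1}^{q}\}}$, i.e.\ your ``insert $q+\eta$ at $a$, delete $\delta_k^q$ at $b$'' step), pushes it through $\phi^{q}$ via diagram \eqref{diagrama} with Lemma \ref{pandz} and Remark \ref{ref1} identifying the deleted order as $\delta_{z}$, and then concludes from Lemma \ref{spectrum} together with the equality of spectra of adjoint problems. The index bookkeeping you flag as the real content is exactly what the paper verifies, so there is no gap.
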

\begin{proof}
	Suppose that $z\neq 1$ (the case $z=1$ is proved similarly). By the definition of adjoint space of  $X_{\{\mu_{1}^{q},\,\dots,\,\mu_{k}^{q}\}}^{\{\rho_{1}^{q},\,\dots,\,\rho_{n-k}^{q}\}}$ we have that 
	\[\{\mu_1^{q},\,\dots,\,\mu_{z-1}^{q},\,\mu_{z}^{q},\,\mu_{z+1}^{q},\,\dots,\,\mu_k^{q},\,n-1-\tau_{n-k}^{q},\,\dots,\, n-1-\tau_{1}^{q}\}=\{0,\,\dots,\,n-1\},\] 
	and 
	\[\{\rho_1^{q},\,\dots,\,\rho_{n-k}^{q},\, n-1-\delta_{k}^{q},\,\dots,\, n-1-\delta_{1}^{q}\}=\{0,\,\dots,\, n-1\}.\] 
	Since, $\mu_{z}^{q}=\sigma_{k}-q$ and $\beta^{q}=n-1-\delta_{k}^{q}$ we deduce that 
	\begin{equation}\label{equivalencia1}
		\begin{tikzcd}
			\arrow{r}{*} X_{ \{\mu_1^{q},\,\dots,\,\mu_{z-1}^{q},\,\mu_{z+1}^{q},\,\dots,\,\mu_k^{q}\}}^{\{\rho_1^q,\dots,\rho_{n-k}^q|\beta^{q}\}} \arrow{r}{*} & X_{\{\tau_{1}^{q},\dots,\tau_{n-k}^{q}|n-1-\sigma_{k}+q\}}^{\{\delta_{1}^{q},\dots,\delta_{k-1}^{q}\}}.
		\end{tikzcd}
	\end{equation}
	
	Using that $\eta=n-1-\sigma_{k}$, the definition of  $X_{\{\tau_{1}^{q},\dots,\tau_{n-k}^{q}\}}^{\{\delta_{1}^{q},\dots,\delta_{k}^{q}\}}$ and the  diagram \eqref{diagrama} we infer that
	\begin{equation*}
		\begin{tikzcd}
			X_{\{\tau_{1}^{q},\dots,\tau_{n-k}^{q}|n-1-\sigma_{k}+q\}}^{\{\delta_{1}^{q},\dots,\delta_{k-1}^{q}\}} \arrow{r}{\phi^{q}}& X_{\{\tau_{1},\,\dots,\,\tau_{n-k}|\eta\}}^{\{\delta_{1},\,\dots,\,\delta_{p-1},\,\delta_{p+1},\,\dots,\,\delta_{k}\}}.
		\end{tikzcd}
	\end{equation*}
	From Lemma \ref{pandz} we know that $p=z$ and therefore \begin{equation}\label{equivalencia2}
		\begin{tikzcd}
			X_{\{\tau_{1}^{q},\dots,\tau_{n-k}^{q}|n-1-\sigma_{k}+q\}}^{\{\delta_{1}^{q},\dots,\delta_{k-1}^{q}\}} \arrow{r}{\phi^{q}}& X_{\{\tau_{1},\,\dots,\,\tau_{n-k}|\eta\}}^{\{\delta_{1},\,\dots,\,\delta_{z-1},\,\delta_{z+1},\,\dots,\,\delta_{k}\}}.
		\end{tikzcd}
	\end{equation}
	Considering Lemma \ref{spectrum} and the fact that the adjoint spaces have the same eigenvalues, from  \eqref{equivalencia1} and \eqref{equivalencia2}, we obtain that the spaces $X_{ \{\mu_1^{q},\,\dots,\,\mu_{z-1}^{q},\,\mu_{z+1}^{q},\,\dots,\,\mu_k^{q}\}}^{\{\rho_1^q,\dots,\rho_{n-k}^q|\beta^{q}\}}$ and $X_{\{\tau_{1},\,\dots,\,\tau_{n-k}|\eta\}}^{\{\delta_{1},\,\dots,\,\delta_{z-1},\,\delta_{z+1},\,\dots,\,\delta_{k}\}}$ have the same eigenvalues. In particular, $(\lambda_2^*)^q=\lambda_{2}^q$.
\end{proof}
Using similar argument to the previous result, we arrive to the next one.
\begin{lemma}\label{adjuntos2}
	$\bar \lambda\neq 0$ is an eigenvalue of $T_{n}[0]$ in $X_{4}^{*}$ defined in \eqref{space4adjunto} if and only if $\bar \lambda$ is an eigenvalue of $T_{n}[0]$ in  $X_{4}$. In particular, $(\lambda_4^*)^q=\lambda_{4}^q$ where $(\lambda_4^*)^{q}\neq 0$ is the least positive (biggest negative) eigenvalue of $T_{n}[0]$ in $X_{4}^*$  and $\lambda_{4}^q\neq 0$ is the least positive (biggest negative) eigenvalue of $T_{n}[0]$ in $X_{4}$.
\end{lemma}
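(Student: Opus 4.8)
The plan is to mirror the proof of Lemma~\ref{adjuntos1}, interchanging the roles of the endpoints $a$ and $b$: where that argument tracked the index $\mu_z^q=\sigma_k-q$ together with $\beta^q$ and invoked $p=z$, the present one tracks $\rho_h^q=\varepsilon_{n-k}-q$ together with $\alpha^q$ and invokes $h=l$. I treat the case $h\neq 1$ in detail; the case $h=1$ is identical after suppressing the first $b$-index.

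First I would compute the direct adjoint of $X_4$. Starting from the defining complement relation of the adjoint of $X_{\{\mu_{1}^{q},\dots,\mu_{k}^{q}\}}^{\{\rho_{1}^{q},\dots,\rho_{n-k}^{q}\}}$, namely $\{\mu_1^{q},\dots,\mu_k^{q},\,n-1-\tau_{n-k}^{q},\dots,n-1-\tau_{1}^{q}\}=\{0,\dots,n-1\}$ together with the analogous identity relating the $\rho_i^q$ and the $\delta_j^q$, one reads off how the two modifications defining $X_4$ propagate to the adjoint side. Adjoining $\alpha^{q}$ to the $a$-conditions removes, on the adjoint side, the complementary index $n-1-\alpha^{q}=\tau_{n-k}^{q}$, using the $q$-analogue $\alpha^{q}=n-1-\tau_{n-k}^{q}$ of Remark~\ref{ref1}; suppressing $\rho_h^{q}$ from the $b$-conditions inserts, on the adjoint side, the index $n-1-\rho_h^{q}=n-1-\varepsilon_{n-k}+q=\gamma+q$, using $\rho_h^{q}=\varepsilon_{n-k}-q$ from Lemma~\ref{indices} and $\gamma=n-1-\varepsilon_{n-k}$ from Remark~\ref{ref1}. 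This gives
\[\begin{tikzcd}
	X_{\{\mu_{1}^{q},\dots,\mu_{k}^{q}|\alpha^{q}\}}^{\{\rho_{1}^{q},\dots,\rho_{h-1}^{q},\rho_{h+1}^{q},\dots,\rho_{n-k}^{q}\}} \arrow{r}{*} & X_{\{\tau_{1}^{q},\dots,\tau_{n-k-1}^{q}\}}^{\{\delta_{1}^{q},\dots,\delta_{k}^{q}|n-1-\varepsilon_{n-k}+q\}}.
\end{tikzcd}\]
Note that the adjoined index is admissible, since $d_q\ge 1$ forces $\varepsilon_{n-k}\ge q$ and hence $\gamma+q\le n-1$.

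Next I would push this adjoint space back along $\phi^{q}$ by means of the commutative diagram~\eqref{diagrama}. By the explicit definitions of $\tau_i^{q}$ and $\delta_j^{q}$, the map $\phi^{q}$ returns $\{\tau_{1}^{q},\dots,\tau_{n-k}^{q}\}$ to $\{\tau_{1},\dots,\tau_{n-k}\}$ and $\{\delta_{1}^{q},\dots,\delta_{k}^{q}\}$ to $\{\delta_{1},\dots,\delta_{k}\}$; in particular it sends the largest index $\tau_{n-k}^{q}=\tau_{l}+q$ back to $\tau_{l}$ and the adjoined index $\gamma+q$ back to $\gamma$. Consequently the deletion of $\tau_{n-k}^{q}$ becomes the deletion of $\tau_{l}$, and invoking $h=l$ from Lemma~\ref{pandz} I obtain
\[\begin{tikzcd}
	X_{\{\tau_{1}^{q},\dots,\tau_{n-k-1}^{q}\}}^{\{\delta_{1}^{q},\dots,\delta_{k}^{q}|n-1-\varepsilon_{n-k}+q\}} \arrow{r}{\phi^{q}} & X_{\{\tau_{1},\dots,\tau_{h-1},\tau_{h+1},\dots,\tau_{n-k}\}}^{\{\delta_{1},\dots,\delta_{k}|\gamma\}},
\end{tikzcd}\]
whose target is precisely $X_{4}^{*}$ of \eqref{space4adjunto}.

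Finally, I would conclude exactly as in Lemma~\ref{adjuntos1}: an operator and its adjoint share the same spectrum, and by Lemma~\ref{spectrum} passage through $\phi^{q}$ preserves the nonzero eigenvalues, so the two displays identify the nonzero spectra of $X_4$ and $X_4^{*}$. In particular their least positive (respectively biggest negative) eigenvalues agree, that is $(\lambda_4^*)^q=\lambda_{4}^q$. I expect the main obstacle to be purely combinatorial: confirming the index bookkeeping of the first step, namely that adjoining $\alpha^{q}$ deletes exactly $\tau_{n-k}^{q}$ and suppressing $\rho_h^{q}$ inserts exactly $\gamma+q$, and then that $\phi^{q}$ converts the deletion of $\tau_{n-k}^{q}$ into the deletion of $\tau_h$. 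The decisive ingredient for this last identification is the equality $h=l$ supplied by Lemma~\ref{pandz}.
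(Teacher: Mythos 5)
Your proposal is correct and is precisely the argument the paper intends: the paper proves Lemma~\ref{adjuntos2} by declaring it "analogous to the previous result," and your write-up carries out that analogy faithfully — the complement bookkeeping (adjoining $\alpha^{q}=n-1-\tau_{n-k}^{q}$ deletes $\tau_{n-k}^{q}$, deleting $\rho_h^{q}=\varepsilon_{n-k}-q$ inserts $\gamma+q$), the transport back along $\phi^{q}$ via diagram~\eqref{diagrama}, and the identification $h=l$ from Lemma~\ref{pandz} mirror exactly the steps \eqref{equivalencia1}--\eqref{equivalencia2} and $p=z$ in the proof of Lemma~\ref{adjuntos1}. The spectral conclusion via Lemma~\ref{spectrum} and the equality of adjoint spectra is likewise the same, so there is nothing to correct.
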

Next, we state and prove the main result, which gives the characterization of the set of parameter $M$ where $v_{s}^{q}[M]$ is of constant sign.
\begin{theorem}\label{T::IP}
	Let $q\in \{1,\dots,n-1\}$. Suppose that ${\{\sigma_1,\dots,\sigma_k\}}-{\{\varepsilon_1,\dots,\varepsilon_{n-k}\}}$ satisfies condition $(N_a)$, $c_q+d_q=n-q$, $c_{q}\geq 1$ and $d_{q}\geq 1$. The following properties are fulfilled:
	\begin{itemize}
		\item Let $2\le k\le n-2$ and $n-k$ even:
		\begin{itemize}
			\item If $n-q-c_q$ is even, then $v_{s}^{q}[M]$ is strongly positive on $I\times I$ if, and only if, $M\in(\lambda_1,\lambda^{q}]$, and if $n-q-c_q$ is odd, then $v_{s}^{q}[M]$ is strongly negative on $I\times I$ if, and only if, $M\in(\lambda_1,\lambda^{q}]$ where:
			\begin{itemize}
				\item[*] $\lambda_1<0$ is the biggest negative eigenvalue of $T_{n}[0]$ in $X_{\{\sigma_1,\dots,\sigma_{k}\}}^{\{\varepsilon_1,\dots,\varepsilon_{n-k}\}}.$ 
				\item [*] $\lambda^{q}>0$ is the minimum between:
				\begin{itemize}
					\item [·] $\lambda_2^{q}>0$, the least positive eigenvalue of $T_{n}[0]$ in $X_{2}$ defined in \eqref{space2}.
					\item [·] $\lambda_4^{q}>0$, the least positive eigenvalue of $T_{n}[0]$ in $X_{4}$ defined in \eqref{space22}.
					
				\end{itemize}
			\end{itemize}
		\end{itemize}
		\item Let $2\leq k\leq n-2$ and $n-k$ odd:
		\begin{itemize}
			\item If $n-q-c_q$ is even, then $v_{s}^{q}[M]$ is strongly positive on $I\times I$ if, and only if, $M\in[\lambda^{q},\lambda_1)$, and if $n-q-c_q$ is odd, then $v_{s}^{q}[M]$ is strongly negative on $I\times I$ if, and only if, $M\in[\lambda^{q},\lambda_1)$ where:
			\begin{itemize}
				\item[*] $\lambda_1>0$ is the least positive eigenvalue of $T_{n}[0]$ in $X_{\{\sigma_1,\dots,\sigma_{k}\}}^{\{\varepsilon_1,\dots,\varepsilon_{n-k}\}}.$ 
				\item [*] $\lambda^{q}<0$ is the maximum between:
				\begin{itemize}
					\item [·] $\lambda_2^{q}<0$, the biggest negative eigenvalue of $T_{n}[0]$ in $X_{2}.$
					\item [·] $\lambda_4^{q}<0$, the biggest negative eigenvalue of $T_{n}[0]$ in $X_{4}$.
				\end{itemize}
			\end{itemize}
		\end{itemize}
		\item Let $k=1$ and $n>2$ odd:
		\begin{itemize}
			\item If $n-q-1$ is even, then $v_{s}^{q}[M]$ is strongly positive on $I\times I$ if, and only if, $M\in(\lambda_1,\lambda_2^{q}]$, and if $n-q-1$ is odd, then $v_{s}^{q}[M]$ is strongly negative on $I\times I$ if, and only if, $M\in(\lambda_1,\lambda_2^{q}]$ where:
			\begin{itemize}
				\item[*] $\lambda_1<0$ is the biggest negative eigenvalue of $T_{n}[0]$ in $X_{\{\sigma_1\}}^{\{\varepsilon_1,\dots,\varepsilon_{n-1}\}}.$ 
				\item [*] $\lambda_{2}^{q}>0$ is the least positive eigenvalue of $T_{n}[0]$ in $X_{2}$.
			\end{itemize}
		\end{itemize}
		\item Let $k=1$ and $n>2$ even:
		\begin{itemize} 
			\item If $n-q-1$ is even, then $v_{s}^{q}[M]$ is strongly positive on $I\times I$ if, and only if, $M\in[\lambda_2^{q},\lambda_1)$, and if $n-q-1$ is odd, then $v_{s}^{q}[M]$ is strongly negative on $I\times I$ if, and only if, $M\in[\lambda_2^{q},\lambda_1)$, where:
			\begin{itemize}
				\item[*] $\lambda_1>0$ is the least positive eigenvalue of $T_{n}[0]$ in $X_{\{\sigma_1\}}^{\{\varepsilon_1,\dots,\varepsilon_{n-1}\}}$. 
				\item [*] $\lambda_{2}^{q}<0$ is the biggest negative eigenvalue of $T_{n}[0]$ in $X_{2}$.
			\end{itemize}
		\end{itemize}
		\item If $k=n-1$, $n>2$ and $n-q-c_q=1$, then $v_{s}^{q}[M]$ is strongly negative on $I\times I$ if, and only if, $M\in[\lambda_2^{q},\lambda_1)$, where
		\begin{itemize}
			\item[*]$\lambda_1>0$ is the least positive eigenvalue of $T_{n}[0]$ in $X_{\{\sigma_1,\dots,\sigma_{n-1}\}}^{\{\varepsilon_1\}}.$
			\item[*] $\lambda_2^{q}<0$ is the biggest negative eigenvalue of $T_{n}[0]$ in $X_{2}$.  
		\end{itemize}
	\end{itemize}
\end{theorem}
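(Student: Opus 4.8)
The plan is to treat $v_{s}^{q}[M]$ as a one-parameter family in $M$ and to locate precisely the endpoint of its constant-sign interval, starting from the known sign at $M=0$ and exploiting monotonicity in $M$. First I would invoke Theorem \ref{resultSigno}: under $(N_a)$, $c_q+d_q=n-q$, $c_q\ge 1$ and $d_q\ge 1$, the function $v_{s}^{q}[0]$ is strongly positive when $n-q-c_q$ is even and strongly negative when $n-q-c_q$ is odd. This anchors the sign at $M=0$ and shows that $M=0$ belongs to the claimed interval. Next, Lemma \ref{lemma:monotonia_derivadas} guarantees that the set of parameters for which $\frac{\partial^{q}}{\partial t^{q}}g_{M}$ keeps this sign is an interval having $\lambda_{1}$ as one endpoint, namely $(\lambda_{1},M_{q}]$ when $n-k$ is even and $[M_{q},\lambda_{1})$ when $n-k$ is odd, along which the derivative is monotone in $M$; Lemma \ref{lem:signo_cte_g_parcial} confirms that $\lambda_1$ is indeed the correct free endpoint. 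Thus the whole problem reduces to identifying $M_{q}$ with $\lambda^{q}$.

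For this identification I would argue by a two-sided inclusion. For the inclusion asserting that strong positivity persists on the whole claimed interval, I would use Propositions \ref{P::1} and \ref{P::2}. These state that a solution of \eqref{Ec::T_n[M]} satisfying the boundary conditions of $v_{s}^{q}[M]$ with exactly one condition dropped, the $\mu_{z}^{q}$ condition at $a$ for Proposition \ref{P::1} and the $\rho_{h}^{q}$ condition at $b$ for Proposition \ref{P::2}, has no zero on $(a,b)$ precisely while $M$ stays below $\lambda_{2}^{q}$, respectively $\lambda_{4}^{q}$. Letting $s\to a^{+}$ and $s\to b^{-}$ in $v_{s}^{q}[M]$ produces exactly such limiting homogeneous solutions, since the jump migrates to the boundary, so the non-vanishing conclusions translate into strict positivity of the boundary functionals $(v_{s}^{q})^{(\alpha^{q})}[M](a)$ and $(-1)^{\beta^{q}}(v_{s}^{q})^{(\beta^{q})}[M](b)$, together with the absence of interior zeros, for all $M$ up to $\min(\lambda_{2}^{q},\lambda_{4}^{q})$. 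In the $k=1$ and $k=n-1$ sub-cases one of the two obstructions is vacuous, because the dropped condition leaves no condition at that endpoint, so only the surviving eigenvalue $\lambda_{2}^{q}$ enters, which explains the simplified intervals in those items.

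For the reverse inclusion I would show that strong positivity must break down as $M$ reaches $\lambda^{q}$, the point being that the boundary functionals degenerate exactly at the eigenvalues of the auxiliary spaces: at $M=\lambda_{2}^{q}$ the condition $(-1)^{\beta^{q}}(v_{s}^{q})^{(\beta^{q})}[M](b)>0$ fails, and at $M=\lambda_{4}^{q}$ the condition $(v_{s}^{q})^{(\alpha^{q})}[M](a)>0$ fails. To make this precise I would pass through the adjoint description via \eqref{gg2} and the commutative diagram \eqref{diagrama}, using Lemmas \ref{adjuntos1} and \ref{adjuntos2} to identify the spectra of $X_{2}$ and $X_{4}$ with those of the adjoint spaces $X_{2}^{*}$ and $X_{4}^{*}$, which are the spaces in which the vanishing of the respective boundary derivative of the Green's function is naturally an eigenvalue condition. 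Since the sign is lost at the first such event, $M_{q}=\min(\lambda_{2}^{q},\lambda_{4}^{q})=\lambda^{q}$ in the even case and, by the symmetric argument with the inequalities reversed, $M_{q}=\max(\lambda_{2}^{q},\lambda_{4}^{q})=\lambda^{q}$ in the odd case.

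The main obstacle I anticipate is the correspondence invoked in the previous step: rigorously matching the vanishing of the functionals $(v_{s}^{q})^{(\alpha^{q})}[M](a)$ and $(v_{s}^{q})^{(\beta^{q})}[M](b)$ to the eigenvalues of $X_{4}$ and $X_{2}$. This requires controlling the $s\to a^{+}$ and $s\to b^{-}$ limits of $v_{s}^{q}[M]$ and of its derivatives, using the regularity $v_{s}^{q}[M]\in C^{n-2-q}(I\times I)$ together with the jump relation \eqref{e-salto}, and confirming that the limiting solution genuinely satisfies the reduced-plus-augmented boundary conditions defining $X_{2}$ and $X_{4}$, equivalently $X_{2}^{*}$ and $X_{4}^{*}$ by Lemmas \ref{adjuntos1} and \ref{adjuntos2}, rather than a degenerate subset of them. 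The parity bookkeeping of $n-q-c_{q}$ and the separate treatment of the endpoint cases $k=1$ and $k=n-1$ are routine once this correspondence is established.
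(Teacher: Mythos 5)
Your skeleton coincides with the paper's: anchor the sign at $M=0$ with Theorem \ref{resultSigno}, use Lemma \ref{lem:signo_cte_g_parcial} and Lemma \ref{lemma:monotonia_derivadas} to see that the constant-sign set is an interval with $\lambda_1$ as its free endpoint, apply Propositions \ref{P::1} and \ref{P::2} to the limiting functions obtained as $s\to a^+$ and $s\to b^-$ (the paper's $w_{M,q}$ and $y_{M,q}$, obtained after differentiating $\eta$, resp.\ $\gamma$, times in $s$), and use Lemmas \ref{adjuntos1} and \ref{adjuntos2} to identify the relevant spectra. But there is a genuine gap at the heart of your sufficiency argument: you assert that the non-vanishing conclusions of Propositions \ref{P::1}--\ref{P::2} ``translate into \dots the absence of interior zeros'' of $v_{s}^{q}[M]$. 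They cannot. Those propositions apply only to functions solving $T_n[M]\,u=0$ on all of $(a,b)$; the limits as $s\to a^+$ and $s\to b^-$ are indeed such solutions (the jump does migrate to the boundary), but for a fixed interior $s$ the function $v_{s}^{q}[M]=\frac{\partial^q}{\partial t^q}g_M(\cdot,s)$ is not: by \eqref{e-salto} its derivative of order $n-1-q$ jumps at $t=s$. Hence Propositions \ref{P::1}--\ref{P::2} control $v_{s}^{q}[M]$ only for $s$ in a neighbourhood of $a$ and of $b$, and nothing in your proposal excludes the sign being lost at an interior point $(t,s)$ of the square while all the boundary behaviour remains fine. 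This is exactly what the paper's Step 5 supplies: writing $T_{n-q}[0]\,v_{s}^{q}[M]=-M\,g_M(\cdot,s)$, it bounds the number of zeros of $v_{s}^{q}[M]$ on $I$ by $n-q+2$ through a derivative-by-derivative oscillation count (keeping track of the jump at $t=s$), notes that the boundary conditions already account for $c_q+d_q=n-q$ zeros, and rules out the remaining ``maximal oscillation'' because it would force $(v_{s}^{q})^{(\alpha^{q})}[M](a)\le 0$, contradicting the sign already secured at $t=a$. Without this interior step (or a substitute for it) your two-sided inclusion does not close.

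A secondary misattribution: the positivity of the $t$-boundary functionals $(v_{s}^{q})^{(\alpha^{q})}[M](a)$ and $(-1)^{\beta^{q}}(v_{s}^{q})^{(\beta^{q})}[M](b)$ for all $s\in(a,b)$ is also not a consequence of the $s\to a^+$, $s\to b^-$ limits; in the paper it comes from Steps 3--4, where the sections $\frac{\partial^{\alpha^{q}+q}}{\partial t^{\alpha^{q}+q}}g_M(t,s)\big|_{t=a}$ and $\frac{\partial^{\beta^{q}+q}}{\partial t^{\beta^{q}+q}}g_M(t,s)\big|_{t=b}$ are viewed, via \eqref{Ec::gg1}, as functions of $s$ solving the adjoint problem in $X_{4}^{*}$ and $X_{2}^{*}$, after which Lemmas \ref{adjuntos2} and \ref{adjuntos1} give $(\lambda_4^*)^{q}=\lambda_4^{q}$ and $(\lambda_2^*)^{q}=\lambda_2^{q}$. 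So the adjoint machinery you reserve for the reverse inclusion is in fact needed already in the forward one; the reverse inclusion itself follows, as the paper shows within Steps 1--2, from the monotonicity of $v_{s}^{q}[M]$ in $M$ together with the discreteness of the spectrum.
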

\begin{proof}
	First, it has been shown in Theorem \ref{resultSigno} that $v_{s}^{q}$ satisfies the property $(P_{g})$ if $n-q-c_q$ is even and the property $(N_{g})$ if $n-q-c_q$ is odd. In addition, from Lemma~\ref{lemma:monotonia_derivadas} we know that if $v_{s}^{q}[M]$ has constant sign then the first eigenvalue $\lambda_{1}$ of $T_{n}[0]$ in $X_{\{\sigma_1,\dots,\sigma_{k}\}}^{\{\varepsilon_1,\dots,\varepsilon_{n-k}\}}$ is one of the extremes of the interval where $v_{s}^{q}[M]$ holds that sign.
	
	Again, taking into account  Lemma~\ref{lemma:monotonia_derivadas} on the monotony of  $v_{s}^{q}[M]$ with respect to $M$, the constant sign starts/ends at the first eigenvalue $\lambda_{1}$ of $T_{n}[0]$ in $X_{\{\sigma_1,\dots,\sigma_{k}\}}^{\{\varepsilon_1,\dots,\varepsilon_{n-k}\}}$. Thus, from Theorems \ref{signo} and \ref{resultSigno} we conclude that:
	\begin{itemize}
		\item If $n-k$ and $n-q-c_q$ are even, and $M\leq 0$, then $v_{s}^{q}[M]$ is strongly positive on $I\times I$ if, and only if, $M\in(\lambda_1,0]$.
		\item If $n-k$ is even, $n-q-c_q$ is odd and $M\leq 0$, then $v_{s}^{q}[M]$ is strongly negative on $I\times I$ if, and only if, $M\in(\lambda_1,0]$.
		\item  If $n-k$ is odd, $n-q-c_q$ is even and $M\geq 0$, then $v_{s}^{q}[M]$ is strongly positive on $I\times I$ if, and only if, $M\in[0,\lambda_1)$.
		\item  If $n-k$ and $n-q-c_q$ are odd, and $M\geq 0$, then $v_{s}^{q}[M]$ is strongly negative on $I\times I$ if, and only if, $M\in[0,\lambda_1)$.
	\end{itemize}
	
	We now need to determine the other extreme of the constant sign interval of $v_{s}^{q}[M]$ using Definition \ref{def:::1}, Propositions \ref{P::1} and \ref{P::2}. We divide the proof into five steps assuming that $n-k$ and $n-q-c_q$ are even, and $2\leq k\leq n-2$. For the rest of cases the proof is done in an analogous way.
	
	\begin{itemize}
		\item[Step 1.] Study of $v_{s}^{q}[M]$ at $s=a$.
		\item[Step 2.] Study of $v_{s}^{q}[M]$ at $s=b$.
		\item[Step 3.] Study of $v_{s}^{q}[M]$ at $t=a$.
		\item[Step 4.] Study of $v_{s}^{q}[M]$ at $t=b$.
		\item[Step 5.] Study of $v_{s}^{q}[M]$  on $(a,b)\times(a,b)$.
	\end{itemize}
	
	Let us denote
	\[g_M(t,s)=\left\lbrace \begin{array}{cc}
		g_M^1(t,s)\,,&a\leq s\leq t\leq b\,,\\\\
		g_M^2(t,s)\,,&a<t<s<b\,,\end{array}\right. \]
	as the related Green's function of $T_{n}[M]$ in $X_{\{\sigma_1,\dots,\sigma_k\}}^{\{\varepsilon_1,\dots,\varepsilon_{n-k}\}}.$
	
	\begin{itemize}
		\item[\textbf{Step 1.}] Study of the function $v_{s}^{q}[M]$ at $s=a$.
	\end{itemize}
	Let us consider the function  $$w_{M,q}(t)=\dfrac{\partial^\eta}{\partial s^\eta}\Big(\dfrac{\partial^{q}}{\partial t^{q}} g_M^1(t,s)\Big)_{\mid s=a},$$ where $\eta$ has been defined in \eqref{Ec::eta}.
	
	If $\eta>0$, since $\{0,\dots,\eta-1\}\subset \{\tau_{1},\dots,\tau_{n-k}\}$ using the equality \eqref{gg2} we obtain that
	\[g_M^1(t,a)=\dfrac{\partial}{\partial s}  g_M^1(t,a)=\cdots=\dfrac{\partial^{\eta-1}}{\partial s^{\eta-1}} g_M^1(t,a)=0\,,\;\; \text{for all}\; t\in (a,b]. \]
	Therefore, for all $t\in (a,b]$ and $q\in \{1,\dots,n-1\}$ we infer that 
	\[\dfrac{\partial^{q}}{\partial t^{q}} g_M^1(t,a)=\dfrac{\partial^{q}}{\partial t^{q}} \Big(\dfrac{\partial}{\partial s}  g_M^1(t,s)\Big)_{\mid s=a}=\cdots=\dfrac{\partial^{q}}{\partial t^{q}}\Big(\dfrac{\partial^{\eta-1}}{\partial s^{\eta-1}} g_M^1(t,s)\Big)_{\mid s=a}=0\,,\]
	equivalently,
	\[\dfrac{\partial^{q}}{\partial t^{q}} g_M^1(t,a)=\dfrac{\partial}{\partial s} \Big(\dfrac{\partial^{q}}{\partial t^{q}} g_M^1(t,s)\Big)_{\mid s=a}=\cdots=\dfrac{\partial^{\eta-1}}{\partial s^{\eta-1}} \Big(\dfrac{\partial^{q}}{\partial t^{q}} g_M^1(t,s)\Big)_{\mid s=a}=0\,.\]
	
	Note that it is necessary that $w_{M,q}>0$ to guarantee the strongly positive sign of the function $v_{s}^{q}[M]$. Indeed, if there exists $t^*\in [a,b]$, such that $w_{M,q}(t^*)<0$, then there exists $\rho(t^*)>a$ such that $v_{s}^{q}[M](t^*)<0$ for all $s\in(a,\rho(t^*))$, which contradicts the strongly positive sign. 
	
	Since the function $v_{s}^{q}[M]$ is a solution of  $T_{n}[M]\,v_{s}^{q}[M](t)=0$ for $t\neq s$, we have that
	\[\dfrac{\partial^\eta}{\partial s^\eta}\left( T_{n}[M]\, v_{s}^{q}[M](t)\right)_{\mid s=a}=T_{n}[M]\,w_{M,q}(t)=0\,,\quad t\in(a,b]\,, \]
	that is, the function $w_{M,q}$ solves the equation $T_{n}[M]\,u(t)=0$, $t\in (a,b]$. 
	
	It remains to obtain the boundary conditions satisfied by the function $w_{M,q}$.
	
	Using the matrix of the Green's function given in \eqref{Ec:MG}, the equality \eqref{relacion}, the expression of $g_{n-j}$, $j=1,\dots,n-1$ given in \eqref{Ec::gj} and the following identity 
	\begin{equation*}  \dfrac{\partial}{\partial t}\Big(\dfrac{\partial^{q} }{\partial t^{q}}\, G_{M}(t,s)\Big)=A\,\dfrac{\partial^{q} }{\partial t^{q}} G_{M}(t,s),\quad \text{for all } t\in I\backslash \left\lbrace s\right\rbrace,\, q\in\{1,\dots,n-1\},
	\end{equation*}
	it follows that 
	\[\dfrac{\partial^{\mu_{1}^{q}+q}}{\partial t^{\mu_{1}^{q}+q}} g_M^2(t,s)_{\mid t=a}=0,\, 	-\dfrac{\partial^{\mu_{1}^{q}+q+1}}{\partial t^{\mu_{1}^{q}+q}\partial s} g_M^2(t,s)_{\mid t=a}=0,\,\dots,\, 	(-1)^\eta\dfrac{\partial^{\mu_{1}^{q}+q+\eta}}{\partial t^{\mu_{1}^{q}+q}\partial s^\eta}g_M^2(t,s)_{\mid t=a}=0\,.\]
	Since $(\eta+\mu_{1}^{q}+q)\mod{n}<n-1$, then we are not in an element of the diagonal of $\dfrac{\partial^{q} }{\partial t^{q}} G_{M}(t,s)$, and taking in account that the above equalities are satisfied for $s=a$, by continuity we deduce that
	\[\left\lbrace \begin{array}{r}
		\dfrac{\partial^{\mu_{1}^{q}+q}}{\partial t^{\mu_{1}^{q}+q}}g_M^1(t,s)_{\mid (t,s)=(a,a)}=0\,,\\\\
		-\dfrac{\partial^{\mu_{1}^{q}+q+1}}{\partial t^{\mu_{1}^{q}+q}\partial s}g_M^1(t,s)_{\mid (t,s)=(a,a)}=0\,,\\\\
		\vdots\hspace{0.5cm}\\\\
		(-1)^\eta\dfrac{\partial^{\mu_{1}^{q}+q+\eta}}{\partial t^{\mu_{1}^{q}+q}\partial s^\eta}g_M^1(t,s)_{\mid (t,s)=(a,a)}=0\,.\end{array}\right.\]
	
	Therefore,
	\[w_{M,q}^{(\mu_{1}^{q})}(a)=\dfrac{\partial^{\mu_{1}^{q}+q+\eta}}{\partial t^{\mu_{1}^{q}+q}\partial s^\eta}g_M^1(t,s)_{\mid (t,s)=(a,a)}=0\,.\]
	
	Making a similar reasoning we deduce that
	\[w_{M,q}^{(\mu_{2}^{q})}(a)=\cdots=w_{M,q}^{(\mu_{z-1}^{q})}(a)=w_{M,q}^{(\mu_{z+1}^{q})}(a)=\cdots=w_{M,q}^{(\mu_{k}^{q})}(a)=0\,.\]
	
	For $\mu_{z}^{q}$, we have the following equalities
	\[\dfrac{\partial^{\mu_{z}^{q}+q}}{\partial t^{\mu_{z}^{q}+q}} g_M^2(t,s)_{\mid t=a}=0,\, -\dfrac{\partial^{\mu_{z}^{q}+q+1}}{\partial t^{\mu_{z}^{q}+q}\partial s}g_M^2(t,s)_{\mid t=a}=0,\,\dots,\, 	(-1)^\eta\dfrac{\partial^{\mu_{z}^{q}+q+\eta}}{\partial t^{\mu_{z}^{q}+q}\partial s^\eta}g_M^2(t,s)_{\mid t=a}=0\,.\]
	
	By assumption, we have that $\eta+\mu_{z}^{q}+q=n-1$, so we are in an element of the diagonal of  $\dfrac{\partial^{q} }{\partial t^{q}} G_{M}(t,s)$, and taking into account again that the previous equalities are satisfied for $s = a$ and that $\dfrac{\partial^{q} }{\partial t^{q}} G_{M}(t,s)$ has a jump equal to $1$ on the diagonal  we infer that
	\[\left\lbrace \begin{array}{r}
		\dfrac{\partial^{\mu_{z}^{q}+q}}{\partial t^{\mu_{z}^{q}+q}}g_M^1(t,s)_{\mid (t,s)=(a,a)}=0\,,\\\\
		-\dfrac{\partial^{\mu_{z}^{q}+q+1}}{\partial t^{\mu_{z}^{q}+q}\partial s}g_M^1(t,s)_{\mid (t,s)=(a,a)}=0\,,\\\\
		\vdots\hspace{0.5cm}\\\\
		(-1)^\eta\dfrac{\partial^{\mu_{z}^{q}+q+\eta}}{\partial t^{\mu_{z}^{q}+q}\partial s^\eta}g_M^1(t,s)_{\mid (t,s)=(a,a)}=1\,.\end{array}\right.\]
	
	Therefore,
	\[w_{M,q}^{(\mu_{z}^{q})}(a)=\dfrac{\partial^{\mu_{z}^{q}+q+\eta}}{\partial t^{\mu_{z}^{q}+q}\partial s^\eta}g_M^1(t,s)_{\mid (t,s)=(a,a)}=(-1)^\eta\,.\]

	Let's see what conditions $w_{M,q}$ satisfies at $t=b$. To do this, proceeding with the previous argument we obtain that	
	\[		\dfrac{\partial^{\rho_{1}^{q}+q}}{\partial t^{\rho_{1}^{q}+q}}g_M^1(t,s)_{\mid t=b}=0,\, -\dfrac{\partial^{\rho_{1}^{q}+q+1}}{\partial t^{\rho_{1}^{q}+q}\partial s}g_M^1(t,s)_{\mid t=b}=0,\,\dots,\, 	(-1)^\eta\dfrac{\partial^{\rho_{1}^{q}+q+\eta}}{\partial t^{\rho_{1}^{q}+q}\partial s^\eta}g_M^1(t,s)_{\mid t=b}=0\,.\]				
	
	The above equalities are satisfied at $s=a$ and since $b\neq a$ we have that
	\[w_{M,q}^{(\rho_{1}^{q})}(b)=\dfrac{\partial^{\rho_{1}^{q}+q+\eta}}{\partial t^{\rho_{1}^{q}+q}\partial s^\eta}g_M^1(t,s)_{\mid (t,s)=(b,a)}=0\,.\]
	
	Analogously we obtain that 
	\[w_{M,q}^{(\rho_{2}^{q})}(b)=\dots=w_{M,q}^{(\rho_{n-k}^{q})}(b)=0\,.\]

	If $\eta=0$, then it follows from the matrix argument we made earlier that the function $w_{M,q}(t)=\frac{\partial^{q} }{\partial t^{q}} g_{M}^{1}(t,a)$ satisfies the same boundary conditions \eqref{Ec::sigma1}--\eqref{Ec::epsilon1}.

	Then from Proposition \ref{P::1} and Theorem \ref{resultSigno} we conclude that $w_{M,q}> 0$ on $(a,b)$ for all $M\in [0,\lambda_2^{q}]$.
	
	\vspace{0.5cm}
	Now,  let us see that $v_{s}^{q}[M]$ cannot be positive for $M>\lambda_2^{q}$.

	Suppose that there exists $\widehat{M}>\lambda_2^{q}$ such that $v_{s}^{q}[\widehat{M}]$ is positive. Then, by monotony of $v_{s}^{q}[M]$ we have  for every $M\in [\lambda_2^{q},\widehat{M}]$ that  $w_{\widehat{M},q}\leq w_{M,q}\leq w_{\lambda_2^{q},q}$.
	
	In particular, $w_{\widehat{M},q}^{(\beta^{q})}(b)\geq w_{M,q}^{(\beta^{q})}(b)\geq w_{\lambda_2^{q},q}^{(\beta^{q})}(b)=0$ if $\beta^{q}$ is even and  $w_{\widehat{M},q}^{(\beta^{q})}(b)\leq w_{M,q}^{(\beta^{q})}(b)\leq w_{\lambda_2^{q},q}^{(\beta^{q})}(b)=0$ if $\beta^{q}$ is odd.
	
	If $w_{\widehat{M},q}^{(\beta^{q})}(b)\neq 0$, then there exists $\rho>0$ such that $w_{\widehat{M},q}(t)<0$ for all $t\in (b-\rho,b)$, which contradicts our assumption. Hence,
	\[0=w_{\widehat{M},q}^{(\beta^{q})}(b)=w_{M,q}^{(\beta^{q})}(b)= w_{\lambda_2^{q},q}^{(\beta^{q})}(b)\,,\quad \forall M\in [\lambda_2^{q},\widehat{M}], \]
	and this fact contradicts the discrete character of the spectrum.
	
	Thus, we conclude that, if $M\in [0,\lambda_2^{q}]$, then
	\[\forall t\in (a,b)\,,\quad \exists \rho^{q}(t)>0\ \mid \ v_{s}^{q}[M](t)>0\ \forall s\in(a,a+\rho^{q}(t))\,.\]
	
	Moreover, if $M>\lambda_2^{q}$, then $v_{s}^{q}[M]$ is not positive.
	\begin{itemize}
		\item[\textbf{Step 2.}] Study of the function $v_{s}^{q}[M]$ at $s=b$.
	\end{itemize}
	
	We study the following function by applying similar reasoning to Step 1. In this case we consider 
	\[y_{M,q}(t)=\dfrac{\partial ^\gamma}{\partial s^\gamma} \Big(\dfrac{\partial^{q}}{\partial t^{q}} g_M^2(t,s) \Big)_{\mid s=b}\,,\]
	where $\gamma$ has been defined in \eqref{Ec::gamma}.
	
	In this case, using arguments analogous to Step 1 we obtain that if $\gamma>0$, then
	\[\dfrac{\partial^{q}}{\partial t^{q}}  g_M^2(t,b)=\dfrac{\partial}{\partial s}g_M^2(t,s)_{\mid s=b}=\cdots =\dfrac{\partial^{\gamma-1}}{\partial s^{\gamma-1}} \Big(\dfrac{\partial^{q}}{\partial t^{q}} g_M^2(t,s) \Big)_{\mid s=b}=0\,.\]
	
	Moreover, if $\gamma$ is even, then $y_{M,q}\geq 0$ and if $\gamma$ is odd, then $y_{M,q}\leq 0$.
	
	Again, we have that 
	\[T_{n}[M]\,y_{M,q}(t)=0\,,\quad \forall\,t\in [a,b)\,.\]
	
	In the same way than above, studying $\dfrac{\partial^{q} }{\partial t^{q}} G_{M}(t,s)$ to determine the boundary conditions of $y_{M,q}$ we infer that
	\[\begin{split}
		y_{M,q}^{(\mu_{1}^{q})}(a)=\cdots=y_{M,q}^{(\mu_{k}^{q})}(a)=&\,0\,,\\
		y_{M,q}^{(\rho_{1}^{q})}(b)\hspace{0.3cm}=\cdots=y_{M,q}^{(\rho_{h-1}^{q})}(b)=y_{M,q}^{(\rho_{h+1}^{q})}(b)=\cdots=y_{M,q}^{(\rho_{n-k}^{q})}(b)=&\,0\,,\\
		\hspace{3cm}y_{M,q}^{(\rho_{h}^{q})}(b)=&\,(-1)^{\gamma+1}\,.
	\end{split}\]
	Analogously, if $\gamma=0$, then $y_{M,q}(t)=\frac{\partial^{q}}{\partial t^{q}} g_{M}^{2}(t,b)$ satisfies the conditions \eqref{Ec::sigma2}--\eqref{Ec::epsilon2} by an argument similar to the case $\gamma>0$.

	Then from Proposition \ref{P::2}  and Theorem \ref{resultSigno} we conclude that $y_{M,q}>0$ if $\gamma$ is even and $y_{M,q}<0$ if $\gamma$ is odd on $(a,b)$ for all $M\in [0,\lambda_4^{q}]$.

	As in the previous step it can be seen that for $M>\lambda_4^{q}$, the function $v_{s}^{q}[M]$ is never positive. Therefore, we conclude from this step that, if $M\in [0,\lambda_4^{q}]$, then
	\[\forall t\in (a,b)\,,\quad \exists \rho^{q}(t)>0\ \mid \ v_{s}^{q}[M](t)>0\ \forall s\in(b-\rho^{q}(t),b)\,.\]
	
	Moreover, if $M>\lambda_4^{q}$, then $v_{s}^{q}[M]$ is cannot be positive.
	\begin{itemize}
		\item[\textbf{Step 3.}] Study of the function $v_{s}^{q}[M]$ at $t=a$.
	\end{itemize}
	Let us denote 
	\[\widehat g_{(-1)^nM}(t,s)=\left\lbrace \begin{array}{cc}
		\widehat g_{(-1)^n M}^1(t,s)\,, &a\leq s\leq t\leq b\,,\\\\
		\widehat g_{(-1)^n M}^2(t,s)\,,&a<t<s<b\,,\end{array}\right. \]
	as the related Green's function of $\widehat T_n[(-1)^nM]$ in $X_{\ \,\{\sigma_1,\dots,\sigma_k\}}^{*\{\varepsilon_1,\dots,\varepsilon_{n-k}\}}=X_{\{\tau_1,\dots,\tau_{n-k}\}}^{\{\delta_1,\dots,\delta_{k}\}}$.

	In this case, we study the following function by applying similar reasoning to Step 1: 
	\[\widehat{w}_{M,q}(t)=(-1)^n\dfrac{\partial^{\alpha^{q}+q} }{\partial\,s^{\alpha^{q}+q}}  \widehat{g}_{(-1)^nM}^1(t,s)_{\mid s=a}\,,\]
	which is equivalent, using equality \eqref{Ec::gg1}, to study the function
	\begin{equation*}
		\widehat w_{M,q}(s)=\dfrac{\partial^{\alpha^{q}+q}}{\partial\,t^{\alpha^{q}+q}}  g_{M}^2(t,s)_{\mid t=a }=\dfrac{\partial^{\alpha^{q}}}{\partial\,t^{\alpha^{q}}} \Big( \dfrac{\partial^{q}}{\partial t^{q}} g_{M}^2(t,s) \Big)_{\mid t=a}\,,\quad s\in I\,. 
	\end{equation*}
	In this case,  if $\alpha^{q}>0$ using that $\{0,\dots,\alpha^{q}-1\}\subset \{\mu_{1}^{q},\dots,\mu_{k}^{q}\}$ we have  
	\[ \dfrac{\partial^{q}}{\partial t^{q}} g_{M}^{2}(a,s)=\dfrac{\partial}{\partial t}\Big( \dfrac{\partial^{q}}{\partial t^{q}} g_{M}^2(t,s) \Big)_{\mid t=a}=\cdots=\dfrac{\partial^{\alpha^{q}-1}}{\partial t^{\alpha^{q}-1}}\Big( \dfrac{\partial^{q}}{\partial t^{q}} g_{M}^2(t,s) \Big)_{\mid t=a}=0\,,\quad \forall s\in(a,b).\]
	
	As in the Steps 1 and 2 we can deduce that
	$$\widehat T_n[(-1)^nM]\,\widehat w_{M,q}(t)=0,\;\;t\in (a,b].$$
	
	Using the arguments of Step 1, we can affirm that if there exists $t^*\in(a,b)$ such that $\widehat w_{M,q}(t^*)<0$, then $v_{s}^{q}[M]$ is not positive. 
	
	By the first row of \eqref{relacion2} and \eqref{Ec::gjh} we deduce that:
	{\small \[ \dfrac{\partial^{\tau_1}}{\partial t^{\tau_1}}\widehat g_{(-1)^nM}^2(t,s)_{\mid t=a}=0,\,- \dfrac{\partial^{\tau_1+1}}{\partial t^{\tau_1}\partial s}\widehat g_{(-1)^nM}^2(t,s)_{\mid t=a}=0,\,\dots,(-1)^{\alpha^{q}+q} \dfrac{\partial^{\tau_1+\alpha^q+q}}{\partial t^{\tau_1}\partial s^{\alpha^{q}+q}}\widehat g_{(-1)^nM}^2(t,s)_{\mid t=a}=0\,.\]}
			
	
	Since $(\tau_1+\alpha^q+q)\mod{n}<n-1$, we do not reach any diagonal element, hence the previous equalities are satisfied for $s=a$, and we obtain:
	{	\[\left\lbrace \begin{array}{rl}
			\dfrac{\partial^{\tau_1}}{\partial t^{\tau_1}}\widehat g_{(-1)^nM}^1(t,s)_{\mid (t,s)=(a,a)}&=0\,,\\\\
			- \dfrac{\partial^{\tau_1+1}}{\partial t^{\tau_1}\partial s}\widehat g_{(-1)^nM}^1(t,s)_{\mid (t,s)=(a,a)}
			&=0\,,\\\\
			\vdots\hspace{0.5cm}\\\\
			(-1)^{\alpha^{q}+q} \dfrac{\partial^{\tau_1+\alpha^{q}+q}}{\partial t^{\tau_1}\partial s^{\alpha^{q}+q}}\widehat g_{(-1)^nM}^1(t,s)_{\mid (t,s)=(a,a)} &=0\,.\end{array}\right.\]}
	
	So, we conclude that $\widehat{w}_{M,q}^{(\tau_1)}(a)=0\,.$
	
	Analogously we obtain that 
	$$\widehat{w}_{M,q}^{(\tau_2)}(a)=\cdots=\widehat{w}_{M,q}^{(\tau_{h-1})}(a)=\widehat{w}_{M,q}^{(\tau_{h+1})}(a)=\cdots=\widehat{w}_{M,q}^{(\tau_{n-k})}(a)=0.$$
	
	For $\tau_{h}$, we have the following equalities 
	{\small \[\dfrac{\partial^{\tau_{h}}}{\partial t^{\tau_{h}}}\widehat g_{(-1)^nM}^2(t,s)_{\mid t=a}=0,\,-\dfrac{\partial^{\tau_{h}+1}}{\partial t^{\tau_{h}}\partial s}\widehat g_{(-1)^nM}^2(t,s)_{\mid t=a}=0,\,\dots,(-1)^{\alpha^{q}+q}\dfrac{\partial^{\tau_{h}+\alpha^{q}+q}}{\partial t^{\tau_{h}}\partial s^{\alpha^{q}+q}}\widehat g_{(-1)^nM}^2(t,s)_{\mid t=a} =0\,.\]}
	
	In this case, since $\tau_{h}+\alpha^q+q=n-1$, we reach a diagonal element of $\widehat G(t,s)$ given in \eqref{Ec:MGh}, and as consequence we obtain the following equalities for $s=a$:
	{	\[\left\lbrace \begin{array}{rl}
			\dfrac{\partial^{\tau_{h}}}{\partial t^{\tau_{h}}}\widehat g_{(-1)^nM}^1(t,s)_{\mid (t,s)=(a,a)}&=0\,,\\\\
			-\dfrac{\partial^{\tau_{h}+1}}{\partial t^{\tau_{h}}\partial s}\widehat g_{(-1)^nM}^1(t,s)_{\mid (t,s)=(a,a)} &=0\,,\\\\
			\vdots\hspace{0.5cm}\\\\
			(-1)^{\alpha^{q}+q} \dfrac{\partial^{\tau_{h}+\alpha^q+q}}{\partial t^{\tau_{h}}\partial s^{\alpha^{q}+q}}\widehat g_{(-1)^nM}^1(t,s)_{\mid (t,s)=(a,a)} &=1\,.\end{array}\right.\]}
	
	Therefore, $\widehat{w}_{M,q}^{(\tau_{h})}(a)=(-1)^{n-\alpha^{q}-q}\,.$
	
	Now, let us study the behavior of $\widehat w_{M,q}$ at $t=b$. Studying the $(n-k+1)^{\mathrm{th}}$ row of \eqref{relacion2}, we have for all $s\in(a,b)$:
	{\small \[\dfrac{\partial^{\delta_1}}{\partial t^{\delta_1}}\widehat g_{(-1)^nM}^1(t,s)_{\mid t=b}=0,- \dfrac{\partial^{\delta_1+1}}{\partial t^{\delta_1}\partial s}\widehat g_{(-1)^nM}^1(t,s)_{\mid t=b}=0,\dots,	(-1)^{\alpha^{q}+q} \dfrac{\partial^{\delta_1+\alpha^{q}+q}}{\partial t^{\delta_1}\partial s^{\alpha^{q}+q}}\widehat g_{(-1)^nM}^1(t,s)_{\mid t=b}=0\,.\]}						
	
	The above equalities are satisfied at $s=a$ and since $b\neq a$ we have that
	$\widehat{w}_{M,q}^{(\delta_1)}(b)=0$.
	
	Analogously we obtain that 
	\[\widehat{w}_{M,q}^{(\delta_2)}(b)=\cdots=\widehat{w}_{M,q}^{(\delta_{k})}(b)=0.\]
	
	Note that if $\alpha^{q}=0$, then from the same matrix argument above we deduce that the function $\widehat w_{M,q}(s)=\frac{\partial^{q} }{\partial t^{q}} g_{M}^{2}(a,s)$ verifies the same boundary conditions:
		\begin{eqnarray*}
			\widehat{w}_{M,q}^{(\tau_2)}(a)=\cdots=\widehat{w}_{M,q}^{(\tau_{h-1})}(a)=\widehat{w}_{M,q}^{(\tau_{h+1})}(a)=&\cdots&=\widehat{w}_{M,q}^{(\tau_{n-k})}(a)=0\,,\\
			\widehat{w}_{M,q}^{(\delta_2)}(b)=&\cdots&=\widehat{w}_{M,q}^{(\delta_{k})}(b)\hspace{0.3cm}=0\,.
		\end{eqnarray*}
		
		From \eqref{Ec::Tg} and \eqref{EC::Ad} we deduce that 
		$$\widehat T_{n}[0]\,v(t):=(-1)^{n}\,T_{n}^{*}[0]\,v(t)=T_{n}[0]\,v(t).$$

		Hence, using arguments similar to Step 1, we conclude that $\widehat w_{M,q}> 0$ on $(a,b)$ for all $M\in [0,(\lambda_4^*)^{q}]$, where $(\lambda_4^*)^{q}>0$ is the least positive eigenvalue of $\widehat T_{n}[0]\equiv T_{n}[0]$ in $X_{4}^*$ defined in \eqref{space4adjunto}.
		
		From Lemma \ref{adjuntos2}, we know that $(\lambda_4^*)^{q}=\lambda_4^{q}$. Thus, from this step we have that, if $M\in [0,\lambda_4^{q}]$, then 
		\[\forall s\in (a,b)\,,\quad \exists \rho^{q}(s)>0\ \mid \ v_{s}^{q}[M](t)>0\ \forall t\in(a,a+\rho^{q}(s))\,.\]
		\begin{itemize}
			\item[\textbf{Step 4.}] Study of the function $v_{s}^{q}[M]$ at $t=b$.
		\end{itemize}
		
		In this case, we study the following function by applying similar reasoning to Step 3 
		\[\widehat{y}_{M,q}(t)=(-1)^n\dfrac{\partial^{\beta^{q}+q} }{\partial\,s^{\beta^{q}+q}}  \widehat{g}_{(-1)^nM}^2(t,s)_{\mid s=b}\,,\]
		which is equivalent using equality \eqref{Ec::gg1} to studying the function 
		\begin{equation*}
			\widehat y_{M,q}(s)=\dfrac{\partial^{\beta^{q}+q} }{\partial\,t^{\beta^{q}+q}} g_{M}^1(t,s)_{\mid t=b}=\dfrac{\partial^{\beta^{q}}}{\partial\,t^{\beta^{q}}} \Big( \dfrac{\partial^{q}}{\partial t^{q}} g_{M}^2(t,s) \Big)_{\mid t=b}\,.\end{equation*}
		
		Since $\{0,\dots,\beta^{q}\}\subset \{\rho_{1}^{q},\dots,\rho_{n-k}\}$ if $\beta^{q}>0$ we obtain that
		\[\dfrac{\partial^{q}}{\partial t^{q}} g_M(b,t)=\dfrac{\partial}{\partial t} \Big(\dfrac{\partial^{q}}{\partial t^{q}}{g}_{M}(t,s)\Big)_{\mid t=b}=\cdots=\dfrac{\partial^{\beta^{q}-1}}{\partial t^{\beta^{q}-1}} \Big(\dfrac{\partial^{q}}{\partial t^{q}} {g}_{M}(t,s)\Big)_{\mid t=b}=0\,.\]
		
		Moreover, using arguments analogous to Step 1 we obtain that if  $\beta^{q}$ is even, then $\widehat y_{M,q}\geq 0$ and if  $\beta^{q}$ is odd, then $\widehat y_{M,q}\leq 0$.
		
		Again, we have that 
		\[\widehat T_{n}[(-1)^{n}M]\,\widehat y_{M,q}(t)=0\,,\quad \forall\,t\in [a,b)\,.\]
		
		A study analogous to Step 3 leads us to the fact that $\widehat y_{M,q}$ satisfies the boundary conditions 
		\begin{equation}\label{condcion}
			\begin{split}
				\widehat{y}_{M,q}^{(\tau_{1})}(a)=\cdots=\widehat{y}_{M,q}^{(\tau_{n-k})}(a)=&\,0\,,\\
				\widehat{y}_{M,q}^{(\delta_{1})}(b)=\cdots=\widehat{y}_{M,q}^{(\delta_{z-1})}(b)=\widehat{y}_{M,q}^{(\delta_{z+1})}(b)=\cdots=\widehat{y}_{M,q}^{(\delta_{k})}(b)=&\,0\,,\\
				y_{M,q}^{(\delta_{z})}(b)=&\,(-1)^{n-\beta^q-q+1}\,.
			\end{split}
		\end{equation}
		
		Analogously, if $\beta^{q}=0$, then $\widehat y_{M,q}(s)=\frac{\partial^{q}}{\partial t^{q}} g_{M}^{2}(b,s)$ satisfies the conditions \eqref{condcion} by an argument similar to the case $\beta^{q}>0$.
		
		Thus, we deduce that  $\widehat{y}_{M,q}> 0$ on $(a,b)$ if $\beta^{q}$ is even and $\widehat{y}_{M,q}< 0$ on $(a,b)$ if $\beta^{q}$ is odd for all $M\in [0,(\lambda_2^*)^{q}]$ , where $(\lambda_2^*)^{q}>0$ is the least positive eigenvalue of $\widehat T_{n}[0]\equiv T_{n}[0]$ in $X_{2}^*$ defined in \eqref{space2adjunto}.

		Again, from Lemma \ref{adjuntos1} we have that $(\lambda_2^*)^{q}=\lambda_2^{q}$. Therefore, we conclude from this step that, if  $M\in [0,\lambda_2^{q}]$, then
		\[\forall s\in (a,b)\,,\quad \exists \rho^{q}(s)>0\ \mid \ v_{s}^{q}[M](t)>0\ \forall t\in(b-\rho^q(t),b)\,.\]
		\begin{itemize}
			\item[\textbf{Step 5.}] Study of  the function $v_{s}^{q}[M]$ on $(a,b)\times(a,b)$.
		\end{itemize}
		In this step, we verify that the function $v_{s}^{q}[M](t)>0$ for all  $(t,s)\in I\times I$ if $M$ belongs to the given intervals. Let us denote $u_{M}^{s}(t)=g_{M}(t,s)$, for all $s\in (a,b)$. 
		
		The function $v_{s}^{q}[M](t)$ is in $C^{n-2-q}(I)$ and it satisfies the boundary conditions \eqref{condicion1}-\eqref{condicion2}.

		Using the definition of $v_{s}^{q}[M]$, the following equality holds for all  $s\in(a,b)$:
		
		\begin{equation}\label{Ec::us}
			T_{n-q}[0]\,v_{s}^{q}[M](t)=-M \,u_{M}^{s}(t)\,,\quad \forall\, t\neq s\,,\ t\in I\,.\end{equation}

		Let's see that for the values of the parameter $M$ for which $v_{s}^{q}[M](t)$ has constant sign in $I$, it cannot have a double zero in $(a,b)$, and this implies that the change of the sign must be at $t=a$ or $t=b$, and the result is proven. 
		
		We do the proof for $n-k$ even (the case $n-k$ odd is similarly proved). In this case, we study the behavior of $v_{s}^{q}[M](t)$ for $M>0$ and $u_{M}^{s}\geq0$. From \eqref{Ec::us}, we have that $$\dfrac{d^{n-q}}{dt^{n-q}}\,v_{s}^{q}[M](t)=\dfrac{\partial^{n}}{\partial t^{n}}\,g_{M}(t,s)\leq 0.$$
		Therefore, using the condition $(T_{d})$, since $v_1=\dots=v_n=1$, we have that  $$\dfrac{d^{n-q-1}}{dt^{n-q-1}}\,v_{s}^{q}[M](t)=\dfrac{\partial^{n-1}}{\partial t^{n-1}}\,g_{M}(t,s)$$ is a decreasing function, with two continuous components and a jump $+1$ at $t=s$. Then, it has at most two zeros on $I$ (see Figure \ref{Fig::1}).

		\begin{figure}[h]
			\centering			\includegraphics[width=10cm]{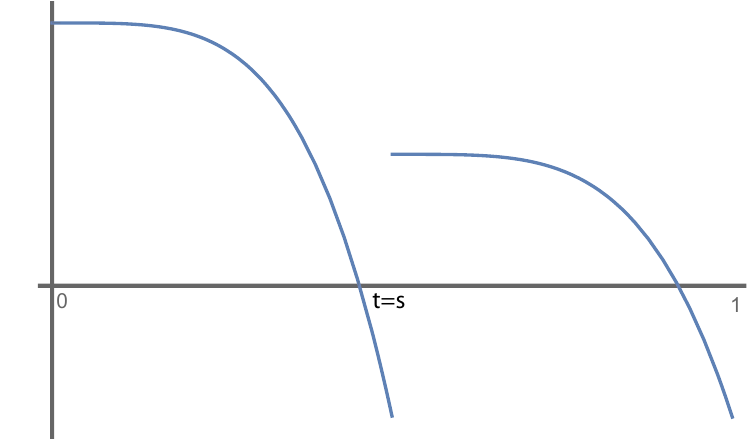}
			\caption{ $\dfrac{d^{n-q-1}}{dt^{n-q-1}}\,v_{s}^{q}[M](t)$, maximal oscillation with $t\in I=[0,1].$}\label{Fig::1}
		\end{figure}
		Now, we have that $$\dfrac{d^{n-q-2}}{dt^{n-q-2}}\,v_{s}^{q}[M](t)=\dfrac{\partial^{n-2}}{\partial t^{n-2}}\,g_{M}(t,s)$$ is a continuous function with at most four zeros on $I$ (See Figure  \ref{Fig::2}).
		
		\begin{figure}[h]
			\centering			\includegraphics[width=10cm]{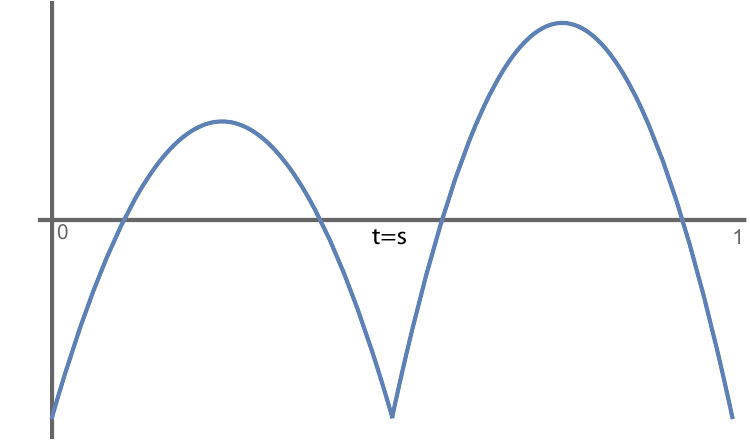}
			\caption{ $\dfrac{d^{n-q-1}}{dt^{n-q-1}}\,v_{s}^{q}[M](t)$, maximal oscillation with $t\in I=[0,1].$}\label{Fig::2}
		\end{figure}
		
		By recurrence, we have that $$\dfrac{d^{n-q-l}}{dt^{n-q-l}}\,v_{s}^{q}[M](t)=\dfrac{\partial^{n-l}}{\partial t^{n-l}}\,g_{M}(t,s)$$ has at most $l+2$ zeros on $I$. In particular, $v_{s}^{q}[M]$ has at most $n-q+2$ zeros on $I$. Moreover, since 
		\begin{eqnarray*}
			(v_{s}^{q})^{(\sigma_{1}^{q})}[M](a)=&\cdots&=(v_{s}^{q})^{(\sigma_{c_{q}}^{q})}[M](a)=0\,,\\
			(v_{s}^{q})^{(\varepsilon_{1}^{q})}[M](b)=&\cdots&=(v_{s}^{q})^{(\varepsilon_{d_{q}}^{q})}[M](b)\hspace{0.04cm}=0\,,
		\end{eqnarray*}
		$v_{s}^{q}[M]$ has $c_{q}+d_{q}=n-q$ zeros on the boundary. Thus, $v_{s}^{q}[M]$ has at most two zeros on $(a,b)$.
		
		Now, making an argument similar to the one made in \cite[Theorem 5.1]{CabSaab}, we deduce that the maximal oscillation (that is, the maximal number of zeros) is possible if and only if
		\begin{equation}\label{osilation}
			\left\lbrace \begin{array}{cc}
				(v_{s}^{q})^{(\alpha^{q})}[M](a)\le 0\,,& \text{if $n-q-c_q$ is even,}\\&\\
				(v_{s}^{q})^{(\alpha^{q})}[M](a)\geq 0\,,& \text{if $n-q-c_q$ is odd.}\end{array}\right. 	
		\end{equation}
		Since we are in the case $n-q-c_{q}$ even, the maximal oscillation is only possible if\linebreak $(v_{s}^{q})^{(\alpha^{q})}[M](a)\le 0$. However, by definition, $(v_{s}^{q})^{(\alpha^{q})}[M](a)\neq 0$ and, since $v_{s}^{q}[M]$ is nonnegative and $\alpha^{q}$ denotes the order of the smallest derivative at $t=a$ which is different from zero, necessarily $(v_{s}^{q})^{(\alpha^{q})}[M](a)>0$. Therefore the maximal oscillation can not happen (that is, $v_{s}^{q}[M]$ cannot have two zero on $(a,b)$). Thus, $v_{s}^{q}[M]$ has at most one simple zero on $(a,b)$, which is not possible due to the constant sign of $v_{s}^{q}[M]$, and we conclude that $v_{s}^{q}[M]$ does not have any zero on $(a,b)$.
		
		From this step we deduce that if $M>0$ and $u_{M}^{s}\geq 0$, then $v_{s}^{q}[M]>0$ on $(a,b)$.			
		
		In conclusion, from the previous steps the result is proved.
	\end{proof}
	In the sequel, we present some  examples of application of the previous result.
	\begin{example}
		Consider again the fourth order operator $T_{4}[M]$ coupled with the boundary conditions $X_{\{0,1,2\}}^{\{2\}}$ defined in Example \ref{ejemp}.
		
		The functions $\frac{\partial}{\partial t}{g}_{M}$ and $\frac{\partial^{2}}{\partial t^{2}}{g}_{M}$ satisfy the conditions of spaces  $X_{\{0,1,3\}}^{\{1\}}$ and $X_{\{0,2,3\}}^{\{0\}}$, respectively.
		
		In this case $k=3$ and $n-k=1$ is odd.	For $q=1$, we have that $c_q=2$, $d_q=1$, $z=2\neq 1$ and $\beta^{1}=0$. By applying Theorem \ref{T::IP} it follows that $\frac{\partial}{\partial t}{g}_{M}$ is strongly negative on  $[0,1]\times [0,1]$  if, and only if $M\in[\lambda_{2}^{1},\lambda_1)$, where
		\begin{itemize}
			\item[*]$\lambda_1>0$ is the least positive eigenvalue of $T_{4}[0]$ in $X_{\{0,1,2\}}^{\{2\}}.$
			\item[*] $\lambda_{2}^{1}<0$ is the biggest negative eigenvalue of $T_{4}[0]$ in $X_{\{0,3 \}}^{\{0,1\}}.$  
		\end{itemize}

		The eigenvalues of $T_{4}[0]$ in $X_{\{0,1,2\}}^{\{2\}}$ are given by $\lambda^{4}$, where $\lambda $ is a positive solution of 
		$$\tanh\Big(\lambda\sqrt{2}\Big)+\tan\Big(\frac{\lambda}{\sqrt{2}}\Big)=0.$$
		Let us denote  $m_{1}$ as the smallest positive solution of this equation. Then, $\lambda_{1}=m_{1}^{4}$ is the least positive eigenvalue of $T_{4}[0]$ in $X_{\{0,1,2\}}^{\{2\}}.$	
		
		In the same way, the eigenvalues of $T_{4}[0]$ in $X_{\{0,3 \}}^{\{0,1\}}$ are given by $-\lambda^{4}$, where $\lambda$ is a positive solution of $\sin(\lambda)=0$.
		
		In this case, $\lambda_{min}^{1}=\pi$ is the smallest positive solution of this equation. Then, $\lambda_{2}^{1}=-(\lambda_{min}^{1})^{4}=-\pi^4$ is the biggest negative eigenvalue of $T_{4}[0]$ in $X_{\{0,3 \}}^{\{0,1\}}.$
		
		By numerical approach, it can be seen that $\lambda_1=m_{1}^{4}\approx 3.34^{4}$. 
		
		For $q=2$, we have that $c_q=n-q-1=1$, $d_q=1$, $z=1$ and $\beta^{2}=1$. Again, by applying Theorem \ref{T::IP} it follows that $\frac{\partial^{2}}{\partial t^2}{g}_{M}$  is strongly negative on  $[0,1]\times [0,1]$ if, and only if  $M\in[\lambda_{2}^{2},\lambda_1)$, where
		\begin{itemize}
			\item[*]$\lambda_1>0$ is the least positive eigenvalue of $T_{4}[0]$ in $X_{\{0,1,2\}}^{\{2\}}.$
			\item[*] $\lambda_{2}^{2}<0$ is the biggest negative eigenvalue of $T_{4}[0]$ in $X_{\{2,3 \}}^{\{0,1\}}.$  
		\end{itemize}
		
		The eigenvalues of $T_{4}[0]$ in $X_{\{2,3 \}}^{\{0,1\}}$ are given by $-\lambda^{4}$, where $\lambda$ is a least positive solution of 
		$$2e^{2\lambda}+(e^{2\lambda}+1)\,\cos(\lambda)=0.$$
		
		By denoting $\lambda_{min}^{2}$ as the smallest positive solution of this equation, then $\lambda_{2}^{2}=-(\lambda_{min}^{2})^{4}$ is the biggest negative eigenvalue of $T_{4}[0]$ in $X_{\{2,3 \}}^{\{0,1\}}.$ 
		
		In this case, we have that  $\lambda_{2}^{2}=-(\lambda_{min}^{2})^{4}\approx -1.87^{4}$.
	\end{example}
	\begin{example}\label{ejemlono}
		We consider the fourth order operator $T_{4}[M]$ defined in the space of the boundary conditions $X_{\{0,1\}}^{\{1,3\}}$. 
		
		In this case $k=2$ and $n-k$ is even. For $q=1$ we have that $c_q=1$, $d_q=n-q-c_q=2$, $z=1$, $h=2$ and $\alpha^{1}=\beta^{1}=1$. Moreover, the function $\frac{\partial}{\partial t}{g}_{M}$ satisfies the conditions of space  $X_{\{0,3\}}^{\{0,2\}}$.
		
		By applying Theorem \ref{T::IP} we obtain that $\frac{\partial}{\partial t}{g}_{M}$ is strongly positive on  $[0,1]\times [0,1]$  if, and only if $M\in (\lambda_{1},\lambda^{1}]$, where
		\begin{itemize}
			\item[*]$\lambda_1<0$ is the least positive eigenvalue of $T_{4}[0]$ in $X_{\{0,1\}}^{\{1,3\}}.$
			\item [*] $\lambda^{1}>0$ is the minimum between:
			\begin{itemize}
				\item [·] $\lambda_2^{1}>0$, the least positive eigenvalue of $T_{4}[0]$ in $X_{\{3\}}^{\{0,1,2\}}.$
				\item [·] $\lambda_4^{1}>0$ is the least positive eigenvalue of $T_{4}[0]$ in $X_{\{0,1,3\}}^{\{0\}}.$
			\end{itemize} 
		\end{itemize}
		
		By numerical approximation, we obtain that $\lambda_1\approx -2.36^{4}$, $\lambda_2^{1}\approx 2.22^4$ and $\lambda_4^{1}\approx 4.44^4$. Therefore, $\frac{\partial}{\partial t}{g}_{M}$ is strongly positive on $[0,1]\times [0,1]$ if, and only if, $M\in(-2.36^4,2.22^4]$.
	\end{example}
	As a consequence of the Theorem \ref{T::IP} we arrive at the following result.
	\begin{corollary} \label{T::cv} Let $q\in \{1,\dots,n-1\}$. Suppose that ${\{\sigma_1,\dots,\sigma_k\}}-{\{\varepsilon_1,\dots,\varepsilon_{n-k}\}}$ satisfies condition $(N_a)$, $c_q+d_q=n-q$, $c_{q}\geq 1$ and $d_{q}\geq 1$. If either  $\sigma_k=q+c_q-1$ or $\varepsilon_{n-k}=q+d_q-1$, we have the following properties:
		\begin{itemize}
			\item If $n-q-c_q$ is even, then  there is not any $M\in \mathbb{R}$ such that $v_{s}^{q}[M]$ is strongly negative on $I\times I$. 
			\item If $n-q-c_q$ is odd, then there is not any $M\in \mathbb{R}$ such that $v_{s}^{q}[M]$ is strongly positive on $I\times I$.
		\end{itemize}
	\end{corollary}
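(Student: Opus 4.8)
The plan is to show that the stated hypothesis forces $v_s^q[M]$ to keep its \emph{natural} sign — the one supplied by Theorem~\ref{resultSigno}, namely $(-1)^{n-q-c_q}$ — throughout a fixed neighbourhood of one corner of the square $I\times I$, \emph{uniformly in $M$}. Since, by Definition~\ref{def:::1}, being strongly positive (respectively negative) requires $v_s^q[M]$ to carry the corresponding sign almost everywhere on $(a,b)\times(a,b)$, the existence of such a corner region on which the sign is always the natural one immediately excludes the opposite sign for every $M\in\mathbb{R}$, which is exactly the assertion.

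First I would treat the case $\sigma_k=q+c_q-1$. Using $\mu_z^q=\sigma_k-q$ from the definition of the $\mu_i^q$ together with $z=c_q$ (as recorded in the proof of Lemma~\ref{pandz}), this equality becomes $\mu_z^q=z-1$; hence $\{\mu_1^q,\dots,\mu_z^q\}=\{0,1,\dots,c_q-1\}$ and $\alpha^q=c_q=z$. I then invoke Step~1 of the proof of Theorem~\ref{T::IP}: the function $w_{M,q}(t)=\frac{\partial^\eta}{\partial s^\eta}\big(\frac{\partial^{q}}{\partial t^{q}}g_M^1(t,s)\big)_{\mid s=a}$ solves $T_n[M]\,w_{M,q}=0$ and satisfies $w_{M,q}^{(\mu_i^q)}(a)=0$ for $i\neq z$ together with the $M$-independent normalisation $w_{M,q}^{(\mu_z^q)}(a)=(-1)^\eta$, the latter coming from the universal jump condition \eqref{e-salto}. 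Because $\mu_z^q=z-1$ is strictly below $\alpha^q=z$, every boundary derivative of $w_{M,q}$ at $a$ of order less than $z-1$ vanishes, so its Taylor expansion at $a$ begins with $\frac{(-1)^\eta}{(z-1)!}(t-a)^{z-1}$; thus $\operatorname{sign}w_{M,q}(t)=(-1)^\eta$ as $t\to a^+$, independently of $M$.

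Next I combine this with the vanishing of the first $\eta$ derivatives in $s$ of $v_s^q[M]$ at $s=a$, also established in Step~1, which gives $v_s^q[M](t)=\frac{(s-a)^\eta}{\eta!}\,w_{M,q}(t)+o\big((s-a)^\eta\big)$ as $s\to a^+$. Consequently, on the triangular region $\{a<s<t,\ t-a\ \text{small}\}$ the sign of $v_s^q[M]$ equals $(-1)^\eta$ for every $M$. By Remark~\ref{ref1} and $\sigma_k=q+c_q-1$ one has $\eta=n-1-\sigma_k=n-q-c_q$, so this fixed sign is precisely $(-1)^{n-q-c_q}$: when $n-q-c_q$ is even it is positive, ruling out strong negativity, and when $n-q-c_q$ is odd it is negative, ruling out strong positivity. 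The case $\varepsilon_{n-k}=q+d_q-1$ is symmetric and I would handle it at the corner $(b,b)$ with the function $y_{M,q}$ of Step~2, using $\rho_h^q=h-1$, $\beta^q=d_q$ and $\gamma=n-1-\varepsilon_{n-k}=c_q$; the analogous corner expansion, built from the leading factors $(b-s)^\gamma$ and $(b-t)^{h-1}$ and the normalisation $y_{M,q}^{(\rho_h^q)}(b)=(-1)^{\gamma+1}$, again reproduces the fixed sign $(-1)^{d_q}=(-1)^{n-q-c_q}$.

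The routine but delicate point, which I expect to be the main obstacle, is the sign bookkeeping: one must verify that $\mu_z^q$ (respectively $\rho_h^q$) is genuinely the lowest order at which the relevant boundary datum of $w_{M,q}$ (respectively $y_{M,q}$) is nonzero, and that after collecting all powers of $-1$ arising from $(s-a)^\eta$, $(t-a)^{z-1}$ (respectively $(s-b)^\gamma$, $(b-t)^{h-1}$) and the normalising constant, the product reduces exactly to $(-1)^{n-q-c_q}$. Once this parity computation is pinned down, the contradiction with the almost-everywhere sign required by Definition~\ref{def:::1} is immediate, and both bullet points follow at once.
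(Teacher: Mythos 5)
Your proposal is correct and follows essentially the same route as the paper's own proof: both cases rest on the auxiliary functions $w_{M,q}$ and $y_{M,q}$ from Steps 1--2 of the proof of Theorem \ref{T::IP}, whose $M$-independent normalisations $w_{M,q}^{(\mu_z^q)}(a)=(-1)^{\eta}$ and $y_{M,q}^{(\rho_h^q)}(b)=(-1)^{\gamma+1}$, combined with the observation that the hypothesis forces $\mu_z^q=z-1$ (resp.\ $\rho_h^q=h-1$) so that this is the lowest-order nonvanishing derivative at the corner, pin down the sign of $v_s^q[M]$ near a corner of $I\times I$ for every $M\in\mathbb{R}$. It is worth noting that your unified corner computation in the second case, giving the sign $(-1)^{\gamma}\cdot(-1)^{\gamma+1}\cdot(-1)^{h-1}=(-1)^{h}=(-1)^{d_q}=(-1)^{n-q-c_q}$, is actually more trustworthy than the paper's four-sub-case bookkeeping, which states the conclusions ``cannot be positive'' and ``cannot be negative'' interchanged in the two sub-cases where $c_q$ is odd (there the correct conclusions are, respectively, ``cannot be negative'' when $n-q-c_q$ is even and ``cannot be positive'' when $n-q-c_q$ is odd, exactly as your computation and the statement of the corollary require).
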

	\begin{proof}
		If $\sigma_k=q+c_q-1$, then $\eta=n-1-\sigma_{k}=n-q-c_q$.
		
		We consider
		$$w_{M,q}(t)=\dfrac{\partial^\eta}{\partial s^\eta}\Big(\dfrac{\partial^{q}}{\partial t^{q}} g_M^1(t,s)\Big)_{\mid s=a},$$ 
		defined in Step 1 of the proof of Theorem \ref{T::IP}.
		
		We know that this function satisfies the following boundary conditions:	
		\[\begin{split}
			w_{M,q}^{(\mu_ {1}^{q})}(a)=\cdots=w_{M,q}^{(\mu_{z-1}^{q})}(a)=w_{M,q}^{(\mu_{z+1}^{q})}(a)=\cdots=w_{M,q}^{(\mu_{k}^{q})}(a)=&\,0\,,\\
			w_{M,q}^{(\rho_{1}^{q})}(b)=\cdots=w_{M,q}^{(\rho_{n-k}^{q})}(b)=&\,0\,,\\
			w_{M,q}^{(\mu_{z}^{q})}(a)=&\,(-1)^{\eta}=(-1)^{n-q-c_q}\,.
		\end{split}\]	
		
		Hence, if $n-q-c_q$ is even, then there exists $\rho>0$, such that $w_{M,q}(t)>0$ for all $t\in(a,a+\rho)$. So, $v_{s}^{q}[M]$ cannot be negative for any real $M$.
		
		Now, if $n-q-c_q$ is odd, then there exists $\rho>0$, such that $w_{M,q}(t)<0$ for all $t\in(a,a+\rho)$. Thus, $v_{s}^{q}[M]$ cannot be positive for any $M\in\mathbb{R}$.	
		
		Analogously, if $\varepsilon_{n-k}=q+d_q-1$, then $\gamma=n-\varepsilon_{n-k}-1=n-q-d_q=c_q$. In this case, we consider the function 
		\[y_{M,q}(t)=\dfrac{\partial ^\gamma}{\partial s^\gamma} \Big(\dfrac{\partial^{q}}{\partial t^{q}} g_M^2(t,s) \Big)_{\mid s=b}\,,\]
		defined in Step 2 of the proof of Theorem \ref{T::IP}. 
		
		We know that for all $M\in \mathbb{R}$, the function $y_{M,q}$ satisfies the following boundary conditions:
		
		\[\begin{split}
			y_{M,q}^{(\mu_{1}^{q})}(a)=\cdots=y_{M,q}^{(\mu_{k}^{q})}(a)=&\,0\,,\\
			y_{M,q}^{(\rho_{1}^{q})}(b)=\cdots=y_{M,q}^{(\rho_{h-1}^{q})}(b)=y_{M,q}^{(\rho_{h+1}^{q})}(b)=\cdots=y_{M,q}^{(\rho_{n-k}^{q})}(b)=&\,0\,,\\
			y_{M,q}^{(\rho_{h}^{q})}(b)=&\,(-1)^{c_q+1}\,.
		\end{split}\]

		Hence, if $n-q-c_q$ and $c_q$ are even, then there exists $\rho>0$, such that $y_{M,q}(t)>0$ for all $t\in(b-\rho,b)$. So, $v_{s}^{q}[M]$ cannot be negative for any real $M$.
		
		Moreover, if $n-q-c_q$ is even and $c_q$ odd, then there exists $\rho>0$, such that $y_{M,q}(t)<0$ for all $t\in(b-\rho,b)$. So, $v_{s}^{q}[M]$ cannot be positive for any real $M$.
		
		Now, if $n-q-c_q$ is odd and $c_q$ even, then there exists $\rho>0$, such that $y_{M,q}(t)<0$ for all $t\in(b-\rho,b)$. So, $v_{s}^{q}[M]$ cannot be positive for any real $M$.
		
		Finally, if $n-q-c_q$ and $c_q$ are odd, then there exists $\rho>0$, such that $y_{M,q}(t)>0$ for all $t\in(b-\rho,b)$.
		
		As consequence, $v_{s}^{q}[M]$ cannot be negative for any real $M$. 		
	\end{proof}
	Now, we give a necessary condition of the nonpositive (nonnegative) sign of $v_{s}^{q}[M]$. This characterization will be an interval in which $v_{s}^{q}[M]$ has a nonpositive (nonnegative) sign whose infimum or supremum is given by the first eigenvalue $\lambda_{1}$ of $T_{n}[M]$ in $X_{\{\sigma_1,\dots,\sigma_{k}\}}^{\{\varepsilon_1,\dots,\varepsilon_{n-k}\}}.$
	\begin{theorem}\label{T::necessary}
		Let $q\in \{1,\dots,n-1\}$. Suppose that ${\{\sigma_1,\dots,\sigma_k\}}-{\{\varepsilon_1,\dots,\varepsilon_{n-k}\}}$ satisfies condition $(N_a)$, $c_q+d_q=n-q$, $c_{q}\geq 1$ and $d_{q}\geq 1$. If  $\mu_{z}^{q}\neq z-1$ and $\rho_{h}^{q}\neq h-1$, then the following properties are fulfilled:
		\begin{itemize}
			\item Let $n-k$ be even:
			\begin{itemize}
				\item   If $n-q-c_q$ is even and  $v_{s}^{q}[M]$ is nonpositive on $I\times I$, then $M\in[\lambda_{*}^{q},\lambda_1)$, and  if $n-q-c_q$ is odd and  $v_{s}^{q}[M]$ is nonnegative on $I\times I$, then $M\in[\lambda_{*}^{q},\lambda_1)$, where 
				\begin{itemize}
					\item [*] $\lambda_{1}<0$ is the biggest negative eigenvalue of $T_{n}[0]$  in $X_{\{\sigma_1,\dots,\sigma_{k}\}}^{\{\varepsilon_1,\dots,\varepsilon_{n-k}\}}.$ 
					\item [*]$\lambda_{*}^{q}<0$ is the maximum between:
					\begin{itemize}
						\item [·] $\lambda_3^{q}<0$ is the biggest negative  eigenvalue of $T_{n}[0]$ in $X_{3}$ defined in \eqref{space3}.
						\item [·] $\lambda_5^{q}<0$, the biggest negative eigenvalue of $T_{n}[0]$ in  $X_{5}$ defined in \eqref{space33}.
					\end{itemize}
				\end{itemize}
			\end{itemize}
			\item Let $n-k$ be odd:
			\begin{itemize}
				\item  If $n-q-c_q$ is even and $v_{s}^{q}[M]$ is nonpositive on $I\times I$, then $M\in (\lambda_{1},\lambda_{*}^{q}]$, and  if $n-q-c_q$ is odd and $v_{s}^{q}[M]$ is nonnegative on $I\times I$, then $M\in (\lambda_{1},\lambda_{*}^{q}]$  where 
				\begin{itemize}
					\item [*] $\lambda_{1}>0$ is the least positive eigenvalue of $T_{n}[0]$  in $X_{\{\sigma_1,\dots,\sigma_{k}\}}^{\{\varepsilon_1,\dots,\varepsilon_{n-k}\}}.$ 
					\item [*]$\lambda_{*}^{q}>0$ is the minimum between:
					\begin{itemize}
						\item [·] $\lambda_3^{q}>0$, the biggest negative  eigenvalue of $T_{n}[0]$ in $X_{3}$.
						\item [·] $\lambda_5^{q}>0$, the biggest negative eigenvalue of $T_{n}[0]$ in  $X_{5}$.
					\end{itemize}
				\end{itemize}
			\end{itemize}
		\end{itemize}		
	\end{theorem}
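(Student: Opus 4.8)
The plan is to adapt the five-step scheme used to prove Theorem~\ref{T::IP}, now tracking the \emph{opposite} sign of $v_s^q[M]$ on the side of the first eigenvalue $\lambda_1$ complementary to the one on which it is strongly signed. I carry out the case $n-k$ even with $n-q-c_q$ even, so that the natural sign of $v_s^q[0]$ is positive by Theorem~\ref{resultSigno} and we are asking when $v_s^q[M]$ can be \emph{nonpositive}; the remaining cases are identical after reversing the relevant signs and inequalities. The two endpoints of the interval $[\lambda_*^q,\lambda_1)$ arise from different mechanisms: the right endpoint $\lambda_1$ is forced by the sign change of $v_s^q[M]$ at the first eigenvalue of the original space, exactly as in Theorem~\ref{T::IP}, while the left endpoint $\lambda_*^q$ is located by the corner analysis of the auxiliary functions $w_{M,q}$ and $y_{M,q}$ of Steps~1 and~2 of that proof, and this is where the hypotheses $\mu_z^q\neq z-1$ and $\rho_h^q\neq h-1$ enter.

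For the upper bound I show that $v_s^q[M]$ is not nonpositive whenever $M\ge\lambda_1$. By Theorem~\ref{T::IP} the function is strongly positive, in particular strictly positive at interior points, for $M\in(\lambda_1,\lambda^q]$, so it cannot be $\le 0$ there; for $M>\lambda^q$ one of the boundary functions (say $w_{M,q}$ when $\lambda^q=\lambda_2^q$) acquires an interior zero by Proposition~\ref{P::1} while retaining its fixed sign near $t=a$ coming from the normalization $w_{M,q}^{(\mu_z^q)}(a)=(-1)^\eta$, and since $v_s^q[M](t)\sim c\,(s-a)^\eta w_{M,q}(t)$ with $c>0$ as $s\to a^+$, this produces points at which $v_s^q[M]>0$. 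Finally $M=\lambda_1$ is discarded because it is an eigenvalue of $T_n[0]$ in $X_{\{\sigma_1,\dots,\sigma_k\}}^{\{\varepsilon_1,\dots,\varepsilon_{n-k}\}}$, where $g_M$ is not defined. Thus nonpositivity forces $M<\lambda_1$.

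The heart of the argument is the lower bound $M\ge\lambda_*^q$. Reasoning at $s=a$, nonpositivity of $v_s^q[M]$ together with $v_s^q[M](t)\sim c\,(s-a)^\eta w_{M,q}(t)$ gives $w_{M,q}\le 0$ on $(a,b)$, and in particular $w_{M,q}^{(\alpha^q)}(a)\le 0$. Because $\mu_z^q\neq z-1$, Proposition~\ref{P::1} makes $X_3$ of \eqref{space3} the space governing the $\mu_z^q$-deleted conditions \eqref{Ec::sigma1}--\eqref{Ec::epsilon1} satisfied by $w_{M,q}$: that deleted problem carries $n-1$ conditions, hence has a one-dimensional solution space, and at $M=\lambda_3^q$ its nontrivial solution must coincide, up to a constant, with the eigenfunction of $T_n[0]$ in $X_3$, so that $w_{\lambda_3^q,q}^{(\alpha^q)}(a)=0$. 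The monotonicity of $M\mapsto v_s^q[M]$ furnished by Lemma~\ref{lemma:monotonia_derivadas} makes $M\mapsto w_{M,q}^{(\alpha^q)}(a)$ monotone with its only sign change at $\lambda_3^q$; hence for $M<\lambda_3^q$ we have $w_{M,q}^{(\alpha^q)}(a)>0$, which yields $v_s^q[M]>0$ for $t,s$ near $a$, contradicting nonpositivity. Therefore $M\ge\lambda_3^q$, and the mirror argument at $s=b$ with $y_{M,q}$, whose deleted problem is governed by $X_5$ of \eqref{space33} since $\rho_h^q\neq h-1$, gives $M\ge\lambda_5^q$; together, $M\ge\max\{\lambda_3^q,\lambda_5^q\}=\lambda_*^q$.

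I expect the main obstacle to be the rigorous corner sign-propagation: showing that the vanishing of the single scalar $w_{M,q}^{(\alpha^q)}(a)$ at $M=\lambda_3^q$, combined with monotonicity, genuinely produces an interior point where $v_s^q[M]$ becomes positive for $M<\lambda_3^q$, rather than merely losing strong positivity, and, symmetrically, the analysis at $b$ through the sign of $y_{M,q}^{(\beta^q)}(b)$. This is precisely the delicate part of Steps~1--2 of Theorem~\ref{T::IP}, now read in the reversed sign regime, and requires the limit functions $\ell_1^q,\ell_2^q$ together with the discreteness of the spectrum; one must also keep careful track of the distinction between the weak condition $v_s^q[M]\le 0$ (which may vanish at interior points) and the strong positivity/negativity of Theorem~\ref{T::IP}. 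The case $n-k$ odd is obtained by reflecting all signs: then $\lambda_1>0$, the interval becomes $(\lambda_1,\lambda_*^q]$, and $\lambda_*^q>0$ is the minimum of the least positive eigenvalues of $T_n[0]$ in $X_3$ and $X_5$.
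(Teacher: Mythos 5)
Your skeleton is the paper's: the corner functions $w_{M,q}$ and $y_{M,q}$, the identity $w_{\lambda_3^q,q}^{(\alpha^q)}(a)=0$ obtained by matching the solution of the $(n-1)$-condition problem with the constant-sign eigenfunction of $T_n[0]$ in $X_3$, and the role of the hypotheses $\mu_z^q\neq z-1$, $\rho_h^q\neq h-1$ all coincide with Steps 1--2 of Theorem \ref{T::IP} as reused in the paper's proof. However, the step that actually closes your lower bound is a genuine gap. You claim that Lemma \ref{lemma:monotonia_derivadas} makes $M\mapsto w_{M,q}^{(\alpha^q)}(a)$ monotone ``with its only sign change at $\lambda_3^q$,'' whence $w_{M,q}^{(\alpha^q)}(a)>0$ for $M<\lambda_3^q$. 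That lemma only provides monotonicity of $\frac{\partial^q}{\partial t^q}g_M$ on the interval adjacent to $\lambda_1$ on which it has \emph{strict} constant sign, i.e.\ on the opposite side of $\lambda_1$ from where you are working; for $M<\lambda_1$ the sign of $\frac{\partial}{\partial M}\frac{\partial^q}{\partial t^q}g_M$ is uncontrolled (it involves products with $g_M$, which need not have constant sign there), so neither monotonicity nor uniqueness of the sign change is available. Moreover, even granting monotonicity, vanishing at $\lambda_3^q$ would only yield $w_{M,q}^{(\alpha^q)}(a)\ge 0$ below $\lambda_3^q$; the strict inequality is precisely what has to be proved. The paper closes the argument differently: assuming a bad $M^*<\lambda_*^q$, it first observes that the set of $M$ where $v_s^q[M]\le 0$ is an interval reaching $\lambda_1$, so nonpositivity holds for \emph{all} $M\in[M^*,\lambda_1)$; on that interval the monotone decreasing character gives the sandwich $v_s^q[\lambda_*^q]\le v_s^q[M]\le v_s^q[M^*]\le 0$, which transfers to $w_{\lambda_*^q,q}\le w_{M,q}\le w_{M^*,q}\le 0$ and, by Taylor expansion at $a$ (all derivatives of order $<\alpha^q$ vanish), traps $w_{M,q}^{(\alpha^q)}(a)$ between $w_{\lambda_3^q,q}^{(\alpha^q)}(a)=0$ and $0$. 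Thus $w_{M,q}^{(\alpha^q)}(a)=0$ for a whole interval of $M$'s, a continuum of eigenvalues of $T_n[0]$ in $X_3$, contradicting the discreteness of the spectrum. You flag discreteness as an anticipated difficulty but never use it; without the interval-propagation step and this continuum argument (or a real substitute) your contradiction does not go through. The same criticism applies to your treatment of $y_{M,q}$ and $X_5$.

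Your upper bound also contains a faulty step. For $M>\lambda^q$ you argue that $w_{M,q}$ keeps a fixed sign near $t=a$ ``coming from the normalization $w_{M,q}^{(\mu_z^q)}(a)=(-1)^\eta$.'' But under the standing hypothesis $\mu_z^q\neq z-1$ one has $\alpha^q<\mu_z^q$, so the lowest-order derivative of $w_{M,q}$ at $a$ not forced to vanish is of order $\alpha^q$, and it is the (unknown, $M$-dependent) sign of $w_{M,q}^{(\alpha^q)}(a)$ that dictates the local sign of $w_{M,q}$ --- not the normalization at order $\mu_z^q$. In addition, Proposition \ref{P::1} supplies parameter ranges in which solutions have \emph{no} interior zeros; it does not assert that interior zeros appear when $M>\lambda_2^q$, so the claimed production of points where $v_s^q[M]>0$ is unsupported. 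The paper obtains $M^*<\lambda_1$ directly from the characterization of Theorem \ref{T::IP} (nonpositivity is incompatible with strong positivity on $(\lambda_1,\lambda^q]$) combined with the interval structure of the nonpositivity set; that is the route to follow here.
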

	\begin{proof}
		Since $\mu_{z}^{q}\neq z-1$ and  $\rho_{h}^{q}\neq h-1$ we have from Propositions \ref{P::1} and \ref{P::2} that there are eigenvalues $\lambda_{3}^{q}$ and $\lambda_{5}^{q}$ with  corresponding related eigenfunctions of constant sign. We suppose that $n-k$ and $n-q-c_q$ are even. For the other cases the proof is done in an analogous way.
		
		Let us assume that there exists $M^*\notin [\lambda_{*}^{q},\lambda_1)$, such that $v_{s}^{q}[M^*]$ is nonpositive on $I\times I$. From Theorem \ref{T::IP}, we can affirm that $M^*<\lambda_{1}$. So, $M^*<\lambda_{*}^{q}$  and for all $M\in[M^*,\lambda_1)$ the function $v_{s}^{q}[M]$ is nonpositive on $I\times I$ because the set where its takes nonpositive values is an interval. By the monotone decreasing character of $v_{s}^{q}[M]$ we infer that
		$$v_{s}^{q}[\lambda_{*}^{q}](t) \le v_{s}^{q}[M](t) \le v_{s}^{q}[M^*](t) \le 0.$$
		
		So, in particular \[w_{\lambda_{*}^{q},q}(t)\le  w_{M,q}(t)\le w_{M^*,q}(t)\le 0 \,,\]
		and
		\[\left\lbrace \begin{array}{cc} y_{\lambda_{*}^{q},q}(t)\le y_{M,q}(t)\le y_{M^*,q}(t) \le 0\,,&\text{if } \gamma \text{ is even,}\\\\
			0\leq y_{M^*,q}(t)\leq y_{M,q}(t)\leq y_{\lambda_{*}^{q},q}(t)\,,& \text{if } \gamma \text{ is odd.}
		\end{array} \right. \]
		
		If $\lambda_{*}^{q}=\lambda_3^{q}$, then  $w_{\lambda_{*}^{q},q}^{(\alpha^{q})}(a)=0$. So, we conclude that, for all $M\in[M^*,\lambda_{*}^{q})$, $w_{M,q}^{(\alpha^{q})}(a)=0$, which contradicts the discrete character of the spectrum of $T_{n}[0]$ in $X_{3}$.
		
		If $\lambda_{*}^{q}=\lambda_5^{q}$, then  $y_{\lambda_{*}^{q},q}^{(\beta^{q})}(b)=0$. So, we conclude that, for all $M\in[M^*,\lambda_{*}^{q})$,  $y_{M,q}^{(\beta^{q})}(b)=0$, which contradicts the discrete character of the spectrum of $T_{n}[0]$ in $X_{5}$.
		
		So, we arrive to a contradiction, thus the result is proved.	
	\end{proof}
	Next, we present an example to illustrate the previous result.
	\begin{example}
		Consider the operator $T_{5}[M]$ defined in the space of the boundary conditions $X_{\{0,2,3\}}^{\{1,3\}}$.  
		
		For $q=1$ we have that $c_q=2$, $n-q-c_q=d_q=2$ is even, $z=h=2$, $\alpha^{1}=0$  and $\beta^{1}=1$. Moreover, the function $\frac{\partial}{\partial t}{g}_{M}$ satisfies the conditions of space  $X_{\{1,2,4\}}^{\{0,2\}}$ that fulfills the assumptions of Theorem \ref{T::necessary}. Therefore, if  $\frac{\partial}{\partial t}{g}_{M}$ is nonpositive on  $[0,1]\times [0,1]$, then $M\in[\lambda_{*}^{1},\lambda^{1})$, where
		\begin{itemize}
			\item[*] $\lambda_1<0$ is the biggest negative eigenvalue of $T_{5}[0]$ in $X_{\{0,2,3\}}^{\{1,3\}}.$
			\item [*] $\lambda_{*}^{1}<0$ is the maximum between:
			\begin{itemize}
				\item [·] $\lambda_3^{1}<0$, the biggest negative  eigenvalue of $T_{5}[0]$ in $X_{\{0,1,4\}}^{\{0,2\}}.$
				\item [·] $\lambda_5^{1}<0$ is the biggest negative eigenvalue of $T_{5}[0]$ in  $X_{\{1,2,4\}}^{\{0,1\}}.$
			\end{itemize} 
		\end{itemize}
		
		By numerical approximation, we obtain that $\lambda_1\approx -2.23^{5}$, $\lambda_3^{1}\approx -3.67^5$ and  $\lambda_5^{1}\approx -2.88^5$. Thus, if $\frac{\partial}{\partial}{g}_{M}$ is nonpositive on $[0,1]\times [0,1]$, then $M\in[-2.88^5,-2.23^5)$.
	\end{example}
	\subsection{Case (b): {\boldmath $c_q=n-q$}  and {\boldmath $d_q=0$} }
	In this subsection we deal with the constant sign of $v_{s}^{q}[M]$ for the case $c_q=n-q$ and $d_q=0$. We arrive to the next theorem.
	\begin{theorem}\label{T::IP1}
		Let $q\in \{1,\dots,n-1\}$. Suppose that ${\{\sigma_1,\dots,\sigma_k\}}-{\{\varepsilon_1,\dots,\varepsilon_{n-k}\}}$ satisfies condition $(N_a)$, $c_{q}+d_{q}=n-q$, $c_{q}=n-q$ and $d_{q}=0$. The following properties are fulfilled:  
		\begin{itemize}
			\item If $n-k$ is even, then $v_{s}^{q}[M]$ is nonnegative on $I\times I$ if, and only if, $M\in(\lambda_1,0]$, where $\lambda_1<0$ is the biggest negative eigenvalue of $T_{n}[0]$ in $X_{\{\sigma_1,\dots,\sigma_{k}\}}^{\{\varepsilon_1,\dots,\varepsilon_{n-k}\}}.$ 
			\item If $n-k$ is odd, then $v_{s}^{q}[M]$ is nonnegative on $I\times I$ if, and only if, $M\in[0,\lambda_1)$, where $\lambda_1>0$ is the least positive eigenvalue of $T_{n}[0]$ in $X_{\{\sigma_1,\dots,\sigma_{k}\}}^{\{\varepsilon_1,\dots,\varepsilon_{n-k}\}}.$ 
		\end{itemize}
	\end{theorem}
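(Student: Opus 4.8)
The plan is to reduce the whole statement to the identity
$\frac{d^{n-q}}{dt^{n-q}}\,v_s^q[M](t)=\frac{\partial^n}{\partial t^n}\,g_M(t,s)=-M\,g_M(t,s)$ for $t\neq s$, which is \eqref{Ec::us}, combined with the behaviour of $v_s^q[M]$ at $t=a$. In Case (b) we have $c_q=n-q$ and $d_q=0$; since the $c_q$ low--order conditions $\{\sigma_1^q,\dots,\sigma_{c_q}^q\}$ form a subset of $\{0,\dots,n-q-1\}$ of cardinality $n-q$, they must be exactly $\{0,1,\dots,n-q-1\}$. Hence $v_s^q[M]$ and all its derivatives up to order $n-q-1$ vanish at $t=a$. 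First I would integrate the order--$(n-q)$ equation $n-q$ times starting from $a$, keeping track of the single unit jump of the $(n-1-q)$--th derivative at $t=s$ coming from \eqref{e-salto}. This produces, for $t\in[a,s)$, the representation $v_s^q[M](t)=-M\int_a^t \frac{(t-\xi)^{n-q-1}}{(n-q-1)!}\,g_M(\xi,s)\,d\xi$, and for $t\in(s,b]$ the same nonnegative--kernel expression plus a strictly positive contribution from the jump.

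For the \emph{if} direction I would invoke the recalled sign of the Green's function. For $n-k$ even and $M\in(\lambda_1,0]\subset(\lambda_1,\lambda_2]$ one has $g_M\ge 0$, so $-M\,g_M\ge 0$; integrating a nonnegative right--hand side with vanishing initial data at $a$ (and adding the positive jump past $t=s$) forces $v_s^q[M]\ge 0$ on all of $I$. For $n-k$ odd and $M\in[0,\lambda_1)\subset[\overline{\lambda_2},\lambda_1)$ one has $g_M\le 0$, whence again $-M\,g_M\ge 0$ and the same integration gives $v_s^q[M]\ge 0$; the endpoint $M=0$ is covered directly by Theorem~\ref{resultSigno}(b).

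For the \emph{only if} direction I would argue on the lower triangle $t<s$, where the jump term is absent. If $v_s^q[M]$ is nonnegative, then Lemma~\ref{lem:signo_cte_g_parcial} forces $g_M$ to be of constant sign, so by Theorem~\ref{signo} and the recalled characterization $M$ lies in $(\lambda_1,\lambda_2]$ ($n-k$ even, $g_M>0$) or in $[\overline{\lambda_2},\lambda_1)$ ($n-k$ odd, $g_M<0$); in particular $M\ne\lambda_1$. It then remains to pin the second endpoint: since the kernel $\frac{(t-\xi)^{n-q-1}}{(n-q-1)!}$ is nonnegative, the representation on $[a,s)$ shows that $v_s^q[M]\ge 0$ there forces $-M\,g_M\ge 0$ on the lower triangle; as $g_M$ has a fixed sign this yields $M\le 0$ when $n-k$ is even and $M\ge 0$ when $n-k$ is odd. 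Intersecting the two constraints gives exactly $M\in(\lambda_1,0]$ (even) and $M\in[0,\lambda_1)$ (odd).

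The main obstacle I anticipate is the careful bookkeeping in the integral representation across the diagonal $t=s$: one must check that the only discontinuity is the unit jump of the $(n-1-q)$--th derivative dictated by \eqref{e-salto}, that all lower--order derivatives remain continuous (using $g_M\in C^{n-2}(I\times I)$), and that the surviving high--order boundary conditions of $v_s^q[M]$ do not interfere with the purely initial--value integration from $a$. A secondary delicate point is the claim that, for $n-k$ even, constancy of sign of $g_M$ forces positivity rather than a spurious negative branch; this is precisely the content of Theorem~\ref{signo} together with the recalled monotonicity facts, which fix the sign of $g_M$ near $M=0$ to be the one used above.
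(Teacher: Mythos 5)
Your representation argument is sound and genuinely different from the paper's route: since $c_q=n-q$ forces $\{q,\dots,n-1\}\subset\{\sigma_1,\dots,\sigma_k\}$, all derivatives of $v_s^q[M]$ of orders $0,\dots,n-q-1$ do vanish at $t=a$, and integrating $\frac{d^{n-q}}{dt^{n-q}}v_s^q[M]=-M\,g_M(\cdot,s)$ together with the unit jump of the $(n-q-1)$-th derivative at $t=s$ (from \eqref{e-salto}) gives
\begin{equation*}
v_s^q[M](t)=-M\int_a^t \frac{(t-\xi)^{n-q-1}}{(n-q-1)!}\,g_M(\xi,s)\,d\xi\quad\text{for }t\le s,\qquad
\text{plus the term }\ \frac{(t-s)^{n-q-1}}{(n-q-1)!}\ \text{ for }t>s.
\end{equation*}
Combined with the recalled sign of $g_M$ on $(\lambda_1,0]$ (resp.\ $[0,\lambda_1)$), this settles the \emph{if} direction completely and correctly. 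The paper argues quite differently: it combines Theorem \ref{resultSigno}(b) (the behaviour at $M=0$) with the monotone dependence on $M$ of Lemma \ref{lemma:monotonia_derivadas}, and pins the endpoint at $0$ by showing that for $M>0$ the function $v_s^q[M]$ becomes strictly negative on the lower triangle (where $v_s^q[0]\equiv 0$) while staying positive in the upper one, hence changes sign.

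Your \emph{only if} direction, however, has a genuine gap. Lemma \ref{lem:signo_cte_g_parcial} only tells you that $g_M$ has \emph{some} constant sign, not which one, and the characterizations recalled after that lemma are one-directional: for $n-k$ even they describe exactly the set where $g_M>0$, but neither they nor anything else you invoke excludes the possibility $g_M\le 0$ for some $M$ when $n-k$ is even (symmetrically, $g_M\ge 0$ when $n-k$ is odd). This possibility is fatal to your chain of deductions: if $n-k$ is even, $M>0$ and $g_M\le 0$, then $-M\,g_M\ge 0$, so your integral representation is perfectly compatible with $v_s^q[M]\ge 0$, and your inference ``constant sign $\Rightarrow g_M>0\Rightarrow M\in(\lambda_1,\lambda_2]$, then $M\le 0$'' simply never engages this case, although such an $M$ would contradict the theorem. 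Closing it requires an extra argument, for instance: if $n-k$ is even, $M>0$ and $g_M\le 0$, the function $u(t)=\int_a^b g_M(t,s)\,ds\le 0$ belongs to $X_{\{\sigma_1,\dots,\sigma_k\}}^{\{\varepsilon_1,\dots,\varepsilon_{n-k}\}}$ and satisfies $u^{(n)}=1-Mu\ge 1>0$, so strong inverse positivity of $T_n[0]$ (Theorem \ref{signo}) yields $u>0$ on $(a,b)$, a contradiction; a symmetric comparison handles $n-k$ odd. Note also that your step ``$v_s^q[M]\ge 0$ on $[a,s)$ forces $-M\,g_M\ge 0$'' needs $g_M$ to be nonvanishing somewhere on the lower triangle: the degenerate scenario $g_M\equiv 0$ for $t<s$ makes the lower-triangle identity vacuous, and excluding it (for $M\ne 0$) is an additional argument relying on the constant-coefficient, translation-invariant structure of $T_n[M]$. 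The paper's monotonicity argument avoids both issues, since it never needs to discuss the sign of $g_M$ for $M$ outside the constant-sign interval.
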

	\begin{proof}
		Let us assume that $n-k$ is even.  For the case $n-k$ odd the proof is done in an analogous way. We know from the Theorem \ref{resultSigno} that $v_{s}^{q}\in X_{\{0,\dots,c_q\}}$ satisfies an initial problem and it is  nonnegative on $[a,b]$. Moreover, $v_{s}^{q}(t)=\frac{\partial^q}{\partial t^q} g_{0}(t,s)>0$ if $s<t$ and $v_{s}^{q}(t)=\frac{\partial^q}{\partial t^q} g_{0}(t,s)=0$ if $s>t$. 
		
		Taking into account Lemma~\ref{lemma:monotonia_derivadas}, we obtain that 
		$v_{s}^{q}(t)=\frac{\partial^q}{\partial t^q} g_{0}(t,s)>0$ on $(a,b)\times (a,b)$ if and only if $M\in (\lambda_1,M_q],$ with $M_q>\lambda_{1}$. Moreover, $\frac{\partial^q}{\partial t^q} g_{M}$ is monotone decreasing with respect to $M\in (\lambda_{1},M_{q}]$.
		
		Let's see that $M_q=0$. Using the monotone decreasing character of $v_{s}^{q}[M]$ with respect to $M\in (\lambda_{1},M_{q}]$ and since $v_{s}^{q}(t)=\frac{\partial^q}{\partial t^q} g_{0}(t,s)=0$ if $s>t$ we have that $v_{s}^{q}[M_q](t)<v_{s}^{q}(t)=0$ if $M_q>0$. On the other hand, if $s<t$ then there exists $(t^{*},s^{*})\in (a,b)\times (a,b)$ with $s^{*}<t^{*}$ such that $0<v_{s^{*}}^{q}[M_q](t^{*})<v_{s^{*}}^{q}(t^{*})$ if $M_q>0$. Thus, $v_{s}^{q}[M]$ changes sign for $M_q>0$ and this completes the proof.
	\end{proof}
	\begin{example}
		Consider the operator $T_{4}[M]$ coupled with the boundary conditions in the space $X_{\{2,3\}}^{\{0,1\}}.$ 
		
		In this case $n-k$ is even. For $q=2$ we have that $c_q=n-q=2$ and $d_q=0$. Then, by Theorem \ref{T::IP1} we have that  $\frac{\partial^{2}}{\partial t^{2}}{g}_{M}$ is nonnegative on  $[0,1]\times [0,1]$ if, and only if,  $M\in(\lambda_{1},0]$, where $\lambda_1<0$ is the biggest negative eigenvalue of $T_{4}[0]$ in $X_{\{2,3\}}^{\{0,1\}}.$

		By numerical approximation, we obtain that $\lambda_1\approx -1.8751^{4}$. Thus, $\frac{\partial^{2}}{\partial t^{2}}{g}_{M}$ is nonnegative on  $[0,1]\times [0,1]$ if, and only if,  $M\in(-1.8751^{4},0]$.
		
		For $q=3$ we have that $c_q=n-q=1$ and $d_q=0$. Again, by Theorem \ref{T::IP1} we have that  $\frac{\partial^{3}}{\partial t^{3}}{g}_{M}$ is nonnegative on  $[0,1]\times [0,1]$ if, and only if,  $M\in(-1.8751^{4},0]$.
	\end{example}
	\subsection{Case (c): {\boldmath $c_q =0$}  and {\boldmath $d_q=n-q$} }
	In this section, arguing in a similar manner than in previous one, we can characterize the constant sign of $v_{s}^{q}[M]$ for the case $c_q=0$ and $d_q=n-q$.
	
	Using Theorem \ref{resultSigno}, Lemma~\ref{lemma:monotonia_derivadas}, and reasoning in an analogous way to the proof of Theorem \ref{T::IP1}, we get to the next result.
	\begin{theorem}\label{T::IP2}
		Let $q\in \{1,\dots,n-1\}$. Suppose that ${\{\sigma_1,\dots,\sigma_k\}}-{\{\varepsilon_1,\dots,\varepsilon_{n-k}\}}$ satisfies condition $(N_a)$, $c_{q}+d_{q}=n-q$, $c_{q}=0$ and $d_{q}=n-q$. Then, the following properties are fulfilled:
		\begin{itemize}
			\item Let $n-k$ be even:
			\begin{itemize}
				\item If $n-q$ is even, then $v_{s}^{q}[M]$ is nonnegative on $I\times I$ if, and only if, $M\in(\lambda_1,0]$, where $\lambda_1<0$ is the biggest negative eigenvalue of $T_{n}[0]$ in $X_{\{\sigma_1,\dots,\sigma_{k}\}}^{\{\varepsilon_1,\dots,\varepsilon_{n-k}\}}.$ 
				\item If $n-q$ is odd, then  $v_{s}^{q}[M]$ is nonpositive on $I\times I$ if, and only if, $M\in (\lambda_1,0]$, where $\lambda_1<0$ is the biggest negative eigenvalue of $T_{n}[0]$ in $X_{\{\sigma_1,\dots,\sigma_{k}\}}^{\{\varepsilon_1,\dots,\varepsilon_{n-k}\}}.$ 
			\end{itemize}
			\item Let $n-k$ be odd:
			\begin{itemize}
				\item If $n-q$ is even, then $v_{s}^{q}[M]$ is nonnegative on $I\times I$ if, and only if, $M\in[0,\lambda_1)$, where $\lambda_1>0$ is the least positive eigenvalue of $T_{n}[0]$ in $X_{\{\sigma_1,\dots,\sigma_{k}\}}^{\{\varepsilon_1,\dots,\varepsilon_{n-k}\}}.$ 
				\item If $n-q$ is odd, then  $v_{s}^{q}[M]$ is nonpositive on $I\times I$ if, and only if, $M\in [0,\lambda_1)$, where $\lambda_1>0$ is the least positive eigenvalue of $T_{n}[0]$ in $X_{\{\sigma_1,\dots,\sigma_{k}\}}^{\{\varepsilon_1,\dots,\varepsilon_{n-k}\}}.$ 
			\end{itemize}
		\end{itemize}
	\end{theorem}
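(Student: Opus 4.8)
The plan is to follow the proof of Theorem~\ref{T::IP1} step by step, interchanging the roles of the two triangles $a\le s<t\le b$ and $a\le t<s\le b$, since now all the boundary conditions sit at $t=b$ rather than at $t=a$. First I would apply Theorem~\ref{resultSigno}~(c): under the hypotheses $c_q=0$, $d_q=n-q$, the function $v_s^q=v_s^q[0]$ solves a pure terminal-value problem, so that $v_s^q(t)=\cdots=v_s^{n-2}(t)=0$ on $[s,b]$, $v_s^{n-1}(t)=0$ on $(s,b]$, while on $[a,s)$ it keeps a strict sign, namely $v_s^q(t)>0$ when $n-q$ is even and $v_s^q(t)<0$ when $n-q$ is odd. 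This shows simultaneously that $M=0$ belongs to the constant-sign set and selects, in each parity case, exactly the sign announced in the statement.

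Next I would feed this information into Lemma~\ref{lemma:monotonia_derivadas}. As in Theorem~\ref{T::IP1}, the lemma yields that the set of parameters $M$ for which $\frac{\partial^{q}}{\partial t^{q}} g_M$ preserves that sign is a half-open interval with the first eigenvalue $\lambda_1$ of $T_n[0]$ in $X_{\{\sigma_1,\dots,\sigma_k\}}^{\{\varepsilon_1,\dots,\varepsilon_{n-k}\}}$ as one endpoint, together with the corresponding strict monotone dependence of $v_s^q[M]$ on $M$. Writing this interval as $(\lambda_1,M_q]$ when $n-k$ is even (so $\lambda_1<0$) and as $[M_q,\lambda_1)$ when $n-k$ is odd (so $\lambda_1>0$), and noting that $M=0$ lies in it, it remains only to prove that $M_q=0$.

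The decisive step exploits that $v_s^q[0]$ vanishes identically on the triangle $\{a<s<t<b\}$. Suppose, towards a contradiction, that $M_q$ lay strictly on the far side of $0$ from $\lambda_1$ (that is, $M_q>0$ when $n-k$ is even, or $M_q<0$ when $n-k$ is odd). Choose any point $(t^{\ast},s^{\ast})$ with $s^{\ast}<t^{\ast}$, where $v_{s^{\ast}}^q[0](t^{\ast})=0$. By the strict monotonicity supplied by Lemma~\ref{lemma:monotonia_derivadas}, moving $M$ from $0$ towards $M_q$ pushes $v_{s^{\ast}}^q[M](t^{\ast})$ strictly off zero, and precisely in the direction opposite to the prescribed sign; hence $v_s^q[M_q]$ takes a value of the wrong sign at $(t^{\ast},s^{\ast})$, while on $[a,s)$ it still carries its original strict sign. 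Thus $v_s^q[M_q]$ would change sign on $I\times I$, contradicting that $M_q$ belongs to the constant-sign set. Therefore $M_q=0$, and collecting the four combinations of the parities of $n-k$ and $n-q$ gives the four assertions.

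The main obstacle I anticipate is purely bookkeeping: in each of the four sub-cases one must align the direction of monotonicity furnished by Lemma~\ref{lemma:monotonia_derivadas} with the sign dictated by the parity of $n-q$, and verify that the strict monotonicity produces a genuine sign violation on the vanishing set $[s,b]$ rather than a merely non-strict one. No analytic ingredient beyond Theorem~\ref{T::IP1} is needed; the only substantive change is that the region on which $v_s^q[0]$ vanishes is now the upper triangle $t>s$ instead of the lower one, which reverses the roles of the two triangles throughout.
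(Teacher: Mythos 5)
Your proposal is correct and follows essentially the same route as the paper: the paper establishes Theorem \ref{T::IP2} precisely by invoking Theorem \ref{resultSigno} (case (c)) and Lemma \ref{lemma:monotonia_derivadas} and repeating the argument of Theorem \ref{T::IP1} with the roles of the two triangles interchanged, which is exactly what you do. Your contradiction step---that for $M_q$ strictly beyond $0$ the monotonicity in $M$ forces a wrong-sign value on the vanishing triangle $\{t>s\}$ while the strict sign persists on $\{t<s\}$---is the same mechanism the paper uses in Theorem \ref{T::IP1} to conclude $M_q=0$.
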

	\begin{example}
		Consider the operator $T_{6}[M]$ coupled with the boundary conditions in the space $X_{\{0,2\}}^{\{1,3,4,5\}}.$

		In this case $n-k=4$ is even. For $q=3$ we have that $c_q=0$ and $d_q=n-q=3$ is odd. Then, by Theorem \ref{T::IP2} we have that  $\frac{\partial^{3}}{\partial t^{3}}{g}_{M}$ is nonpositive on  $[0,1]\times [0,1]$ if, and only if,  $M\in(\lambda_{1},0]$, where $\lambda_1<0$ is the biggest negative eigenvalue of $T_{6}[0]$ in $X_{\{0,2\}}^{\{1,3,4,5\}}.$

		By numerical approximation, we obtain that $\lambda_1\approx -1.953^{6}$. Thus, $\frac{\partial^{3}}{\partial t^{3}}{g}_{M}$ is nonpositive on  $[0,1]\times [0,1]$ if, and only if,  $M\in(-1.953^{6},0]$.
		
		For $q=4$ we have that $c_q=0$ and $d_q=n-q=2$ is even. Again, from Theorem \ref{T::IP2} we have that  $\frac{\partial^{4}}{\partial t^{4}}{g}_{M}$ is nonnegative on  $[0,1]\times [0,1]$ if, and only if,  $M\in(-1.953^{6},0]$.
		
		Finally, for $q=5$ we have that $c_q=0$ and $d_q=n-q=1$ is odd. Then,  $\frac{\partial^{5}}{\partial t^{5}}{g}_{M}$ is nonpositive on  $[0,1]\times [0,1]$ if, and only if,  $M\in(-1.953^{6},0]$.
	\end{example}
	\section{Application to Nonlinear Problems}\label{sec7.3}
	In this section, we will study the existence of a nontrivial positive solution of the following nonlinear problem 
	\begin{equation}\label{probnonlineal}
		\left\{\begin{aligned}
			T_n[M]u(t)&=f\left(t,u(t),\dots,u^{(n-1)}(t)\right),\ t\in I,\\
			u^{(\sigma_1)}(a)&= \cdots= u^{(\sigma_k)}(a)= 0, \\
			u^{(\varepsilon_1)}(b)&= \cdots= u^{(\varepsilon_{n-k})}(b)= 0,
		\end{aligned} \right.
	\end{equation}
	with $f:I\times [0,\infty)\times \R^{n}\rightarrow [0,\infty)$ a continuous function that satisfies some adequate conditions that we will detail later.
	
	The existence of positive solutions of the nonlinear problem \eqref{probnonlineal} will be deduced from the index theory applied to compact operators defined in suitable cones.
	
	Suppose that there are $r$ derivatives of  constant sign of the Green's function $g_{M}$ related to linear problem of \eqref{probnonlineal}, with $r\in \{1,\dots, n-1\}$. Let us denote by $q_1,\dots, q_r\in \{1,\dots,n-1\}$ the indices of such derivatives, with 
	\[ q_{1}< q_{2}< \dots< q_{r}.\]
	Moreover, let us denote by 
	$$S:=S_{1}\cup S_{2}^{I}\cup S_{2}^{F}\cup \{0\}$$
	where 
	$$S_{1}:=\{\{q_{1},\dots,q_{l}\}\;/\;\; c_{q_{l}}+d_{q_{l}}=n-q_{l}, c_{q_{l}}\geq 1\;\; \text{and}\;\; d_{q_{l}}\geq 1 \},$$
	$$S_{2}^{I}:=\{\{q_{l+1},\dots,q_{r}\}\;/\;\; c_{q_{l+1}}=n-q_{l+1}\;\; \text{and}\;\; d_{q_{l+1}}=0 \},$$
	$$S_{2}^{F}:=\{\{q_{l+1},\dots,q_{r}\}\;/\;\;  c_{q_{l+1}}=0\;\; \text{and}\;\; d_{q_{l+1}}=n-q_{l+1}\},$$
	and $\{0\}$ denoting the constant sign of the Green's function $g_{M}$. 
	
	It is clear that $S_{2}^{I}=\varnothing$ or/and $S_{2}^{F}=\varnothing$. We assume that $S_{2}^{I}=\varnothing$ and $S_{2}^{F}\neq\varnothing$ (for the other cases, i.e., $S_{2}^{F}= \varnothing$ and either $S_{2}^{I}=\varnothing $ or  $S_{2}^{I}\neq \varnothing$, the arguments are analogous).

	Along this section, we will assume that $n-k$ is even (for $n-k$ odd an analogous reasoning could be made). We know from Lemma \ref{lem:signo_cte_parcialq1_parcialq2} and Theorem \ref{T::IP} that for any $q\in S_{1}$, $\frac{\partial^{q} }{\partial t^{q}} g_{M}$ has constant sign for all $M\in (\lambda_{1},\lambda^{q_{l}}]$ (the constant sign interval of the largest derivative of $S$ which belongs to $S_1$), with $\lambda_{1}$ and $\lambda^{q_{l}}$ characterized in Theorem \ref{T::IP}. 
	
	Moreover, for all $M\in  (\lambda_{1},\lambda^{q_{l}}]$ we have that 
	$$g_{M}(t,s)\geq 0 \;\;\text{and}\;\; (-1)^{d_{q_{i}}}\,\frac{\partial^{q_i} }{\partial t^{q_i}} g_{M}(t,s)\geq 0\;\; \text{for all}\;\; i\in \{1,\dots,l\},$$
	where $d_{q_{i}}=n-q_i-c_{q_{i}}$ for $i=1,\dots, l$.
	
	Let us consider the following condition introduced in \cite[page 182]{LoNolineal} as follows:
	\begin{itemize}
		\item[$(P_{g_{1}}$)] Suppose that there are continuous functions $\phi$, $k_1$ and $k_2 $ such that $\phi(s)>0$ for all $s\in (a,b)$ and $0<k_1(t)<k_2(t)$ for all $t\in (a,b)$, satisfying:
		\[\phi(s)\,k_1(t)\leq G(t,s)\leq \phi(s)\, k_2(t)\,,\quad \text{for all } (t,s)\in I \times I,\]
		where $G$ is a suitable integral kernel of certain integral operator.
	\end{itemize}
	
	Using the characterization of \cite[Theorem 8.1]{CabSaab} and Theorem \ref{T::IP}, with a similar argument to the one made in \cite[Theorem 5.2]{LoNolineal}, the following result is proved.
	
\begin{lemma}\label{lem:Pg1}
Suppose that $n-k$ is even. Then, for any  $M\in (\lambda_{1},\lambda^{q_{l}})$, the functions $g_{M}$, $(-1)^{d_{q_{i}}}\,\frac{\partial^{q_{i}}}{\partial t^{q_{i}}} g_{M}$, $i\in \{1,\dots,l\}$, satisfy the condition $(P_{g_{1}})$, that is, 
there exist continuous functions $\phi$, $k_{1}$,  $k_{2}$, $0<k_1(t)<k_2(t)$ for all $t\in (a,b)$, satisfying
\[\phi(s)\,k_1(t)\leq g_{M}(t,s)\leq \phi(s)\, k_2(t),\quad \text{for all } (t,s)\in I \times I,\]
and $k_{1}^{q_i}$, $k_{2}^{q_i}$, with $0< k_{1}^{q_i}(t) < k_{2}^{q_i}(t)$ for all $t\in (a,b)$  and  $i\in \{1,\dots,l\}$ such that
\[\phi(s)\,k_1^{q_{i}}(t)\leq (-1)^{d_{q_{i}}}\,\frac{\partial^{q_{i}}}{\partial t^{q_{i}}} g_{M}(t,s)(t,s)\leq \phi(s)\, k_2^{q_{i}}(t),\quad \text{for all } (t,s)\in I \times I, \quad i=1,\dots,l,\]
where $\phi(s)=(s-a)^{\eta}\,(b-s)^{\gamma}$, with $\eta$ and $\gamma$ defined in \eqref{Ec::eta} and \eqref{Ec::gamma}.
\end{lemma} 

\begin{proof}
We make the proof only for the function $g_{M}$. The same argument holds true for the functions $(-1)^{d_{q_{i}}}\,\frac{\partial^{q_{i}}}{\partial t^{q_{i}}} g_{M}$, $i=1,\dots,l$, by taking the same function $\phi$ and using Steps 1 and 2 of Theorem \ref{T::IP}.
	
 For all $M\in (\lambda_{1},\lambda^{q_{l}})$, we know from Steps 1 and 2 of \cite[Theorem 8.1]{CabSaab} that 
	\[\frac{\partial^{\eta}}{\partial t^{\eta}} g_{M}(t,s)_{\mid s=a}>0\quad\text{and}\quad (-1)^{\gamma}\,\frac{\partial^{\gamma}}{\partial t^{\gamma}} g_{M}(t,s)_{\mid s=b}>0,\;\; \text{for all}\;\; t\in (a,b).\]



	Let us define the following function \[v_{M}^{t}(s)=\dfrac{g_{M}(t,s)}{(s-a)^{\eta}\,(b-s)^{\gamma}}.\]
	It is clear that  $v_{M}^{t}(s)>0$ on $(a,b)\times (a,b)$ for all $M\in (\lambda_{1},\lambda^{q_{l}})$. Moreover, for each $t\in (a,b)$ we have that 
	\begin{equation*}
		\begin{aligned}
			h_{1}(t)&=\lim\limits_{s\rightarrow a^+}\dfrac{g_{M}(t,s)}{(s-a)^{\eta}\,(b-s)^{\gamma}} = \dfrac{\frac{\partial^{\eta}}{\partial t^{\eta}} g_{M}(t,s)_{\mid s=a}}{\eta!\, (b-a)^{\gamma}}\in (0,\infty)\,,\\
			h_{2}(t)&=\lim\limits_{s\rightarrow b^-}\dfrac{g_{M}(t,s)}{(s-a)^{\eta}\,(b-s)^{\gamma}}= \dfrac{(-1)^{\gamma}\,\frac{\partial^{\gamma}}{\partial t^{\gamma}} g_{M}(t,s)_{\mid s=b}}{\gamma!\, (b-a)^{\eta}}\in (0,\infty). 
		\end{aligned}
	\end{equation*}

	For each $t\in(a,b)$, let us define $\tilde{v}_{M}^{t}$ as the continuous extension of $v_{M}^{t}$ to the interval $I$, that is 
	\begin{eqnarray*}
		\tilde{v}_{M}^{t}(s)=\left\{
		\begin{array}{ccc}
			h_{1}(t)\,,& s=a,\\
			v_{M}^{t}(s)\,,& s\in (a,b),\\
			h_{2}(t)\,,& s=b.
		\end{array}
		\right.
	\end{eqnarray*}
	Then,  $\tilde{v}_{M}^{t}(s)>0$ on $[a,b]$ for all $t\in (a,b)$. Therefore, the following functions							 		
	\begin{equation*}
		\begin{aligned}
			k_{1}(t)&=\min_{s\in I} \tilde{v}_{M}^{t}(s)\,,\quad t\in I,\\
			k_{2}(t) &=\max_{s\in I}\tilde{v}_{M}^{t}(s)\,,\quad t\in I, 
		\end{aligned}
	\end{equation*}
	are continuous on $I$ and positive in $(a,b)$.
	
	Taking $\phi(s)=(s-a)^{\eta}\,(b-s)^{\gamma}>0$  on $(a,b)$, the function $g_{M}$ satisfies the condition $(P_{g_{1}})$.
\end{proof}

	Let us consider $I_{1}=[a_{1},b_{1}]$ and $I_{1}^{q_{i}}=[a_{1}^{q_{i}},b_{1}^{q_{i}}]$, $i=1,\dots, l$, subintervals of $I$ such that ${|k_{1}(t)|>0}$ for all $t\in I_{1}$ and $|k_{1}^{q_{i}}(t)|>0$ for all $t\in I_{1}^{q_{i}}$. Notice that, due to their continuity on $I$, such functions have constant sign on their corresponding intervals. Furthermore, let us define 
	\begin{equation*}
		\begin{aligned}
			k_{1}&=\max_{t\in I} |k_{1}(t)|,\;\; m_{1}\hspace{0.1cm}=\min_{t\in I_{1}} |k_{1}(t)|,\;\; k_{2}=\max_{t\in I} |k_{2}(t)|,\\
			k_{1}^{q_{i}}&=\max_{t\in I} |k_{1}^{q_{i}}(t)|,\;\; k_{2}^{q_{i}}=\max_{t\in I} |k_{2}^{q_{i}}(t)|,
		\end{aligned}
	\end{equation*}
	for $i=1,\dots,l$.
	
	We assume that the nonlinear part of equation satisfies the following conditions:
	\begin{itemize}
		\item[$(H_{1})$] 	$\displaystyle\liminf\limits_{\max\{|x_1|,\ldots,|x_{n}|\}\to 0} \min_{t\in I} \frac{f\left(t,x_1,\ldots,x_{n}\right)}{|x_1|+\cdots+|x_{n}|}=+\infty.$
		\item[$(H_{2})$] $\displaystyle	\limsup\limits_{\min\{|x_1|,\ldots,|x_{n}|\}\to \infty} \max_{t\in I} \frac{f\left(t,x_1,\ldots,x_{n}\right)}{|x_1|+\cdots+|x_{n}|}=0.$
	\end{itemize}
	
	Let $X\equiv (C^{n}(I),||\cdot||)$ be the real Banach space endowed with the norm 
	\[||u||=\max\{||u||_{\infty},\ldots,||u^{(n)}||_{\infty}\},\;\; \text{for all}\;\; u\in X,\]
	where $||u||_{\infty}=\max_{t\in I}\{ |u(t)|\}$ and consider operator 
	$\mathcal{L}_{M}:X\to X$, defined as
	\begin{equation}\label{operador}
		\mathcal{L}_{M}\, u(t):=\int_{a}^{b} g_{M}(t,s)\,f\left(s,u(s),\dots,u^{(n-1)}(s)\right)\,ds, \quad t \in I.
	\end{equation}
We will apply the fixed point index theory to the operator $\mathcal{L}_{M}$ to guarantee the existence of a fixed point of such operator.

	Let us define the following cone:
	{\small$$K=\left\{u\in X:\;\;
	\begin{aligned} 
		&u(t)\geq 0, (-1)^{d_{q_{i}}}\,u^{(q_{i})}(t)\geq 0,\,i\in\{1,\dots,l\},\,(-1)^{n-q_{j}}\,u^{(q_{j})}(t)\geq 0,\,j\in \{l+1,\dots,r\},\, t\in I,\\
		&u(t)\geq \frac{k_{1}(t)}{k_{2}} \|u\|_{\infty},\, (-1)^{d_{q_{i}}}\,u^{(q_{i})}(t)\geq \frac{k_{1}^{q_{i}}(t)}{k_{2}^{q_{i}}} \|u^{(q_{i})}\|_{\infty},\,i\in\{1,\dots,l\},\, t\in I
	\end{aligned}
	\right\}.$$}
	\begin{remark}
		Note that if $S_{2}^{F}=\varnothing$ and $S_{2}^{I}\neq\varnothing$ we take the following cone defined as:
			{\small$$K^{*}=\left\{u\in X:\;\;
			\begin{aligned} 
				&u(t)\geq 0, (-1)^{d_{q_{i}}}\,u^{(q_{i})}(t)\geq 0,\,i\in\{1,\dots,l\},\, u^{(q_{j})}(t)\geq 0,\,j\in \{l+1,\dots,r\},\, t\in I,\\
				&u(t)\geq \frac{k_{1}(t)}{k_{2}} \|u\|_{\infty},\, (-1)^{d_{q_{i}}}\,u^{(q_{i})}(t)\geq \frac{k_{1}^{q_{i}}(t)}{k_{2}^{q_{i}}} \|u^{(q_{i})}\|_{\infty},\,i\in\{1,\dots,l\},\, t\in I
			\end{aligned}
			\right\}.$$}
		On the other hand, if $S_{2}^{I}=\varnothing$ and $S_{2}^{F}=\varnothing$ we have that $l=r$ and we work with the cone defined as:
	$$\bar{K}=\left\{u\in X:\;\;
			\begin{aligned} 
				&u(t)\geq 0, (-1)^{d_{q_{i}}}\,u^{(q_{i})}(t)\geq 0,\,i\in\{1,\dots,l\},\, t\in I,\\
				&u(t)\geq \frac{k_{1}(t)}{k_{2}} \|u\|_{\infty},\, (-1)^{d_{q_{i}}}\,u^{(q_{i})}(t)\geq \frac{k_{1}^{q_{i}}(t)}{k_{2}^{q_{i}}} \|u^{(q_{i})}\|_{\infty},\,i\in\{1,\dots,l\},\, t\in I
			\end{aligned}
			\right\}.$$
	\end{remark}
	\begin{remark}
		Note that we can take the following family of cones $K_{q_{m}}\subset K$ defined as:
		$$K_{q_{m}}=\left\{u\in X:\;\;
		\begin{aligned} 
			&u(t)\geq 0,\dots, (-1)^{d_{q_{m}}}\,u^{(q_{m})}(t)\geq 0,\,t\in I,\\
			&u(t)\geq \frac{k_{1}(t)}{k_{2}} \|u\|_{\infty},\dots, (-1)^{d_{q_{m}}}\,u^{(q_{m})}(t)\geq \frac{k_{1}^{q_{m}}(t)}{k_{2}^{q_{m}}} \|u^{(q_{m})}\|_{\infty},\, t\in I
		\end{aligned}
		\right\},$$ if  $0\leq m \le l$ and 
		{\small$$K_{q_{m}}=\left\{u\in X:\;\;
			\begin{aligned} 
				&u(t)\geq 0, (-1)^{d_{q_{i}}}\,u^{(q_{i})}(t)\geq 0,\,i\in\{1,\dots,l\},\,(-1)^{n-q_{j}}\,u^{(q_{j})}(t)\geq 0,\,j\in \{l+1,\dots,m\},\, t\in I,\\
				&u(t)\geq \frac{k_{1}(t)}{k_{2}} \|u\|_{\infty},\, (-1)^{d_{q_{i}}}\,u^{(q_{i})}(t)\geq \frac{k_{1}^{q_{i}}(t)}{k_{2}^{q_{i}}} \|u^{(q_{i})}\|_{\infty},\,i\in\{1,\dots,l\},\, t\in I
			\end{aligned}
			\right\}.$$} if  $l< m \le r$.
		
	\end{remark}
	\begin{remark}
		If $n-k$ is odd, then we obtain by a similar argument to Lemma~\ref{lem:Pg1} that  functions $g_{M}$ and $(-1)^{d_{q_{i}}}\,\frac{\partial^{q_{i}}}{\partial t^{q_{i}}} g_{M}$, $i\in \{1,\dots,l\}$, satisfy the following condition (with obvious notation):
		\begin{itemize}
			\item[$(N_{g_{1}}$)] Suppose that there are continuous functions $\phi, k_1$ and $k_2 $ such that $\phi(s)>0$ for all $s\in (a,b)$ and $k_1(t)<k_2(t)<0$ for all $t\in (a,b)$, satisfying:
			\[\phi(s)\,k_1(t)\leq G(t,s)\leq \phi(s)\, k_2(t)\,,\quad \text{for all } (t,s)\in I \times I \,.\]
		\end{itemize}
		In this case, we would look for a nontrivial positive solution of the following nonlinear problem 
		\begin{equation*}
			\left\{\begin{aligned}
				&T_n[M]u(t)+f\left(t,u(t),\dots,u^{(n-1)}(t)\right)=0,\ t\in I,\\
				&u^{(\sigma_1)}(a)= \cdots= u^{(\sigma_k)}(a)= 0, \\
				&u^{(\varepsilon_1)}(b)= \cdots= u^{(\varepsilon_{n-k})}(b)= 0.
			\end{aligned} \right.
		\end{equation*}
		
	\end{remark}

	In order to prove an existence result of problem \ref{probnonlineal}, we introduce the following definition and preliminary results.
	\begin{definition}
		Let $X$ be a Banach space, $\Omega\subset X$ open and $T: \Omega \rightarrow X$ a continuous map. We say that $T$ is compactly fixed if the set of fixed points of $T$ is compact.
	\end{definition}
	
	We will denote by $\operatorname{Fix}(T)$ the set of fixed points of $T$. 
	
	Next lemma compiles some classical results regarding the fixed point index formulated in \cite[Theorems 6.2, 7.3 and 7.11]{granas} in a more general framework.
	
	In particular, given  $X$ a Banach space, $K\subset X$ a cone and $\Omega \subset K$ an arbitrary open subset, $\partial\,\Omega$ will denote the boundary of $\Omega$ in the relative topology in $K$, induced by the topology of $X$.  
	\begin{lemma} \label{l-Amann-fixed-point}
		Let $X$ be a Banach space, $K\subset X$ a cone and $\Omega \subset K$ an arbitrary open subset with $0\in\Omega$. Assume that $T\colon \overline \Omega \rightarrow K$ is a compact and compactly fixed operator such that $x\neq T x$ for all $x\in\partial\,\Omega$. 
		
		Then the fixed point index $i_K(T,\Omega)$ has the following properties:
		\begin{enumerate}
			\item If $x\neq \mu\, T x$ for all $x\in\partial\, \Omega$ and for every  $\mu\le 1$, then $i_K(T,\Omega)=1.$
			\item If $\Omega$ is bounded and there exists $e\in K\setminus\{0\}$ such that $x\neq T x+\lambda\, e$ for all $x\in\partial\, \Omega$ and all $\lambda>0$, then $i_K(T,\Omega)=0$.
			\item If $i_K(T,\Omega)\neq 0$, then $T$ has a fixed point in $\Omega$.
			\item If $\Omega_1$ and $\Omega_2$ are two open and disjoint sets such that $\operatorname{Fix}(T) \subset \Omega_1\cup \Omega_2 \subset \Omega$, then
			\[i_K(T,\Omega)=i_K(T,\Omega_1) + i_K(T,\Omega_2). \]
		\end{enumerate}
	\end{lemma}
	
	\begin{remark}
		Note that, in Item $2$ in previous lemma, it is required that $\Omega$ is bounded. However, the other assertions hold for an arbitrary open set, which might be unbounded.	
	\end{remark}
	
	Using Items $1$ and $2$ in Lemma~\ref{l-Amann-fixed-point}, it is possible to deduce the following corollary. The proof would be analogous to that of \cite[Theorem 2.3.3]{Guo_general}.
	\begin{corollary}\label{cor-GuoLaks-prelim-adaptado}
		Let $X$ be a Banach space, $K\subset X$ a cone and $\Omega \subset K$ an open set such that $0\in\Omega$. Assume that $T\colon \overline \Omega \rightarrow K$ is a compact and compactly fixed operator such that $x\neq T x$ for all $x\in\partial\,\Omega$. Then
		\begin{enumerate}
			\item If $T \,x \not\succeq x$ for all $x\in \partial\,\Omega$ then $i_K(T,\Omega)=1$.
			\item If $\Omega$ is bounded and, moreover, $T \,x \not\preceq x$ for all $x\in \partial\,\Omega$, then $i_K(T,\Omega)=0$.
		\end{enumerate}
	\end{corollary}

Next, we prove the following theorem to ensure the existence of positive solutions.

	\begin{theorem}\label{theoremmain}
		Suppose that the conditions $(H_{1})$ and $(H_{2})$ hold. Then, for all $M\in (\lambda_{1},\bar{\lambda}]$ where $$\bar{\lambda}:=\left\{
		\begin{aligned}
		0,\;\; \text{if}\;\; l<r,\\
		\lambda^{q_{l}}\;\; \text{if}\;\; l=r,
		\end{aligned}
		\right.$$ the nonlinear problem \eqref{probnonlineal} has at least a nontrivial solution $u\in K$.
	\end{theorem}
	\begin{proof}
		Consider the operator $\mathcal{L}_{M}$ defined in \eqref{operador}. Since $g_{M}\geq 0$ for all $M\in (\lambda_{1},\bar{\lambda}]$, $f\geq0$ and the fixed points of the operator $\mathcal{L}_{M}$ coincide with the solutions of problem \eqref{probnonlineal}, we deduce that these solutions are nonnegative. 		
		
		We show that $\mathcal{L}_{M}$ is well-defined in $K$, that is, $\mathcal{L}_{M}(K)\subset K$.

		Let $u\in K$, it is immediate to verify that  $\mathcal{L}_{M}\,u \ge 0$ on $I$. Moreover, we have that:
			\begin{equation*}
			(-1)^{d_{q_{i}}}\,(\mathcal{L}_{M}\,u)^{(q_{i})}(t)=\displaystyle \int_{a}^{b} (-1)^{d_{q_{i}}}\,\frac{\partial^{q_{i}}}{\partial t^{q_{i}}}g_{M}(t,s)\,\, f\left(s,u(s),\dots,u^{(n-1)}(s)\right)\,ds\geq 0,
		\end{equation*}
		\begin{equation*}
			\begin{aligned}
				\mathcal{L}_{M}\,u(t):&=\displaystyle \int_{a}^{b} g_{M}(t,s)\,\,f\left(s,u(s),\dots,u^{(n-1)}(s)\right)\,ds\\ 
				&\geq \displaystyle \int_{a}^{b} k_{1}(t)\,\phi(s)\,\,f\left(s,u(s),\dots,u^{(n-1)}(s)\right)\,ds\\
				&=\frac{k_{1}(t)}{k_{2}} \displaystyle \int_{a}^{b} k_{2}\,\phi(s)\, \,f\left(s,u(s),\dots,u^{(n-1)}(s)\right)\,ds\\
				&\geq \frac{k_{1}(t)}{k_{2}} \displaystyle \int_{a}^{b} \left\{\max_{t\in I} g_{M}(t,s)\right\} \,f\left(s,u(s),\dots,u^{(n-1)}(s)\right)\,ds\\
				&\geq \frac{k_{1}(t)}{k_{2}} \max_{t\in I}\left\{\displaystyle \int_{a}^{b}  g_{M}(t,s)\, f\left(s,u(s),\dots,u^{(n-1)}(s)\right)\,ds\right\}=\frac{k_{1}(t)}{k_{2}}\,\|\mathcal{L}_{M}\,u\|_{\infty},
			\end{aligned}
		\end{equation*}
		and 
		\begin{equation*}
			\begin{aligned}
				(-1)^{d_{q_{i}}}\,(\mathcal{L}_{M}\,u)^{(q_{i})}(t)&=\displaystyle \int_{a}^{b} (-1)^{d_{q_{i}}}\,\frac{\partial^{q_{i}}}{\partial t^{q_{i}}}g_{M}(t,s)\,\, f\left(s,u(s),\dots,u^{(n-1)}(s)\right)\,ds\\
				&\geq \displaystyle \int_{a}^{b} k_{1}^{q_{i}}(t)\,\phi(s)\,\,f\left(s,u(s),\dots,u^{(n-1)}(s)\right)\,ds\\
				&=\frac{k_{1}^{q_{i}}(t)}{k_{2}^{q_{i}}} \displaystyle \int_{a}^{b} k_{2}^{q_{i}}\,\phi(s)\,\,f\left(s,u(s),\dots,u^{(n-1)}(s)\right)\,ds\\
				&\geq \frac{k_{1}^{q_{i}}(t)}{k_{2}^{q_{i}}} \displaystyle \int_{a}^{b} \left\{\sup_{t\in I}\, (-1)^{d_{q_{i}}}\,\frac{\partial^{q_{i}}}{\partial t^{q_{i}}}g_{M}(t,s)\right\}\,f\left(s,u(s),\dots,u^{(n-1)}(s)\right)\,ds\\
				&\geq \frac{k_{1}^{q_{i}}(t)}{k_{2}^{q_{i}}} \sup_{t\in I}\left\{\displaystyle \int_{a}^{b}  (-1)^{d_{q_{i}}}\,\frac{\partial^{q_{i}}}{\partial t^{q_{i}}}g_{M}(t,s)\,f\left(s,u(s),\dots,u^{(n-1)}(s)\right)\,ds\right\}\\
				&=\frac{k_{1}^{q_{i}}(t)}{k_{2}^{q_{i}}}\,\|(-1)^{d_{q_{i}}}\,(\mathcal{L}_{M}\,u)^{(q_{i})}\|_{\infty}=\frac{k_{1}^{q_{i}}(t)}{k_{2}^{q_{i}}}\,\|(\mathcal{L}_{M}\,u)^{(q_{i})}\|_{\infty},
			\end{aligned}
		\end{equation*}
		for all $t\in I$ and $i\in \{1,\dots,l\}$.
		
		On the other hand, for $j\in \{l+1,\dots, r\}$ and $t \in I$, we have that 
			\begin{equation*}
				(-1)^{n-q_{j}}\,(\mathcal{L}_{M}\,u)^{(q_{j})}(t)=\displaystyle \int_{a}^{b} (-1)^{n-q_{j}}\,\frac{\partial^{q_{j}}}{\partial t^{q_{j}}}g_{M}(t,s)\,\, f\left(s,u(s),\dots,u^{(n-1)}(s)\right)\,ds\geq 0.
		\end{equation*}
		Thus, $\mathcal{L}_{M}(K)\subset K$.
		
		By the continuity of $f$ and using standard techniques one can prove that operator $\mathcal{L}_{M}$ is compact.
		
		Consider $u\in K\cap \partial\Omega_1$. Let us choose

		$$\epsilon_1>\frac{k_{2}}{m_{1}^{2} \displaystyle \int_{a_{1}}^{b_{1}}  \phi(s)\, ds}.$$

		From assumption $(H_{1})$, there exists $p>0$ such that when $\|u\|<p$ we have that 
		\begin{equation*}
			f\left(t,u(t),\dots,u^{(n-1)}(t)\right)\ge 
			\epsilon_1\,\left(|u(t)|+\cdots+|u^{(n-1)}(t)|\right),\;\;\text{for all}\;\; t\in I.
		\end{equation*}
		
		Now, we show that  $\mathcal{L}_{M}\, u\not\preceq u$, for all $u\in K\cap \partial\Omega_1$ with $$\Omega_1=\{u\in K:\ \|u\|<p\},$$ for some $p>0$. Here $\preceq$ denotes the order induced by the cone $K$.
		
		Then, we have that for $t\in I_1$, 
		\begin{equation*}
			\begin{aligned}
				\mathcal{L}_{M}\,u(t)&=\displaystyle \int_{a}^{b} g_{M}(t,s)\,\,f\left(s,u(s),\dots,u^{(n-1)}(s)\right)\,ds\\
				&\geq \displaystyle \int_{a}^{b} k_{1}(t)\,\phi(s)\,\,f\left(s,u(s),\dots,u^{(n-1)}(s)\right)\,ds\\
				&\geq \displaystyle \int_{a_{1}}^{b_{1}} k_{1}(t)\,\phi(s)\,\,f\left(s,u(s),\dots,u^{(n-1)}(s)\right)\,ds\\
				&\geq m_{1}\,\displaystyle \int_{a_{1}}^{b_{1}} \phi(s)\,\,f\left(s,u(s),\dots,u^{(n-1)}(s)\right)\,ds\\
				&\geq m_{1}\,\epsilon_{1}\,\displaystyle \int_{a_{1}}^{b_{1}} \phi(s)\,\left(|u(s)|+\cdots+|u^{(n-1)}(s)|\right)\,ds\\
				& \geq \frac{m_{1}\,\epsilon_{1}}{k_2}\,\displaystyle \int_{a_{1}}^{b_{1}} \phi(s)\,k_{1}(s)\,\|u\|_\infty\,ds\\
				&\geq \frac{m_{1}^2\,\epsilon_{1}}{k_2}\,\|u\|_\infty \displaystyle \int_{a_{1}}^{b_{1}} \phi(s)\,ds>\|u\|_\infty.
			\end{aligned}
		\end{equation*}

		Therefore, $\mathcal{L}_{M}\,u(t)>u(t)$ for all $t\in I_{1}$ and so it is proved that $\mathcal{L}_{M}\,u\not\preceq u$ for all $u\in K\cap \partial\Omega_1$. As a consequence (see  \cite[Theorem 2.3.3]{Guo_general}), we have that
		\[i_K(\mathcal{L}_M, \, \Omega_1)=0. \]
		
		\bigskip
		On the other hand, the regularity of the Green's function $g_{M}$ 
		allows us 
		guarantee that there exist $N_0, N_{q_{i}}\in \R$, $i\in \{1,\dots,r\}$ with $N_0>0$ and $N_{q_{i}}>0$ such that 
		\[\max_{t\in I} \displaystyle \int_{a}^{b} g_{M}(t,s)\,ds\leq N_0 \;\; \text{and}\;\; \sup_{t\in I} \displaystyle \int_{a}^{b} (-1)^{d_{q_{i}}}\,\frac{\partial^{q_{i}}}{\partial t^{q_{i}}}g_{M}(t,s)\,ds\leq N_{q_{i}},\;\;i\in\{1,\dots,r\}. \] 
		
		Let us choose 
		$$\epsilon_2<\min\left\{ \frac{1}{n\,N_0},\frac{1}{n\,N_{q_{i}}}:\;\;i\in \{1,\dots,r\}  \right\}.$$
		
		By hypothesis $(H_2)$, there exists $\tilde{M}>0$ such that if ${\min\left\{|u(t)|,\ldots,|u^{(n-1)}(t)|\right\}\geq \tilde{M}}$ we have that 
		\begin{equation*}
			f\left(t,u(t),\ldots,u^{(n-1)}(t)\right)
			\le \epsilon_{2}\,\left(|u(t)|+\cdots+|u^{(n-1)}(t)|\right)
			\le n\,\epsilon_{2}\,\|u\|,\;\;\text{for all}\;\;t\in I.
		\end{equation*}

		Consider $q>\{p,\tilde{M}\}$ and let us define the following subset of $K$
		$$\Omega_2=\bigcup_{i=0}^{r} \Big\{u\in K:\,\min_{t\in I} |u^{(q_i)}(t)|<q\Big\}.$$
		
		Since the set $\Omega_{2}$ is unbounded in the cone $K$, as we have pointed out earlier,  the fixed point index of operator $\mathcal{L}_{M}$ with respect to $\Omega_2$ is only defined in the case that the set of fixed points of operator $\mathcal{L}_{M}$ in $\Omega_2$, that is, $(id-\mathcal{L}_{M})^{-1}(\{0\})\cap \Omega_2$, is compact.

		Arguing as in the proof of \cite[Theorem 3.2]{Min}, we can assume that $i_K(\mathcal{L}_M,\Omega_2)$ can be defined in this case. Indeed, since $id-\mathcal{L}_{M}$ is a continuous operator, it is obvious that $(id-\mathcal{L}_{M})^{-1}(\{0\})\cap\Omega_{2}$ is closed. Moreover, if such set is unbounded, we would have infinite fixed points of operator $\mathcal{L}_{M}$ on $\Omega_{2}$ and, as a direct consequence, Problem \eqref{probnonlineal} has an infinite number of positive solutions on $\Omega_{2}$.
		Thus, we may assume that such set is bounded, and, from this hypothesis, is not difficult to verify that $(id-\mathcal{L}_{M})^{-1}(\{0\})\cap\Omega_{2}$ is equicontinuous.
		
		Now, we prove that $\|\mathcal{L}_{M}\,u\|\leq \|u\|$ for all $u\in K \cap\partial \Omega_2$.
		
		Consider $u\in K\cap \partial\Omega_2$. Therefore, 
		$$\min\left\{\min_{t\in I} |u^{(q_i)}(t)|:\,i\in \{0,\dots,r\}\right\}=q> \tilde{M}.$$
		
		Then, 
		\begin{equation*}
			\begin{aligned}
				\|\mathcal{L}_{M}\,u\|_{\infty}&=\max_{t\in I}\,\displaystyle \int_{a}^{b} g_{M}(t,s)\,f\left(s,u(s),\dots,u^{(n-1)}(s)\right)\,ds\leq n\,\epsilon_{2}\,\|u\|\max_{t\in I}\,\displaystyle \int_{a}^{b} g_{M}(t,s)\,ds\\
				&\leq n\,\epsilon_{2}\,\|u\|\,N_0< \|u\|,
			\end{aligned}
		\end{equation*} 
		and 
		\begin{equation*}
			\begin{aligned}
				\|(\mathcal{L}_{M}\,u)^{(q_{i})}\|_{\infty}&=\sup_{t\in I}\,\Big|\displaystyle \int_{a}^{b} \frac{\partial^{q_{i}}}{\partial t^{q_{i}}} g_{M}(t,s)\,f\left(s,u(s),\dots,u^{(n-1)}(s)\right)\,ds \Big|\\
				&\leq n\,\epsilon_{2}\,\|u\|\sup_{t\in I}\,\displaystyle \int_{a}^{b} (-1)^{d_{q_{i}}}\,\frac{\partial^{q_{i}}}{\partial t^{q_{i}}} g_{M}(t,s)\, f\left(s,u(s),\dots,u^{(n-1)}(s)\right)\,ds
				\\&\leq n\,\epsilon_{2}\,\|u\|\,N_{q_{i}}< \|u\|,
			\end{aligned}
		\end{equation*} 
		for all $i\in {1,\dots,r}$.
		
Thus, $\|\mathcal{L}_{M}\,u\|< \|u\|$, for all $u\in K\cap \partial\Omega_2$, and as a consequence (see \cite[Corollary 7.4]{granas}) we deduce that
		\[i_K(\mathcal{L}_M, \, \Omega_2)=1. \]
		
		Therefore, we conclude that the operator $\mathcal{L}_{M}$ has a fixed point in ${K\cap (\bar{\Omega_2}\setminus \Omega_1)}$ which is a solution of problem \eqref{probnonlineal}.
		
		Thus, in both situations ($(id-\mathcal{L}_{M})^{-1}(\{0\})\cap\Omega_{2}$ bounded or unbounded), we may ensure that there is a nontrivial solution of Problem \eqref{probnonlineal} on the cone $K$.
	\end{proof}
	In the sequel, we present an example to illustrate our result.
	\begin{example}
		Let us consider again the space studied in Example \ref{ejemlono}: $X_{\{0,1\}}^{\{1,3\}}$. In this case, we have from \cite[Theorem 8.1]{CabSaab} that 
		$g_{M}\geq 0$ on $[0,1]\times [0,1]$ if, and only if, $M\in(\lambda_{1},\lambda_{2}]$, where 
		\begin{itemize}
			\item[*]$\lambda_1<0$ is the least positive eigenvalue of $T_{4}[0]$ in $X_{\{0,1\}}^{\{1,3\}}.$
			\item [*] $\lambda_{2}>0$ is the minimum between:
			\begin{itemize}
				\item [·] $\lambda_2'>0$, the least positive eigenvalue of $T_{4}[0]$ in $X_{\{0\}}^{\{0,1,3\}}.$
				\item [·] $\lambda_2''>0$ is the least positive eigenvalue of $T_{4}[0]$ in $X_{\{0,1,2\}}^{\{1\}}.$
			\end{itemize} 
		\end{itemize}
		
		Since $X_{\ \, \{0,1,2\}}^{*\{1\}}=X_{\{0\}}^{\{0,1,3\}}$, we have that $X_{\{0\}}^{\{0,1,3\}}$ and $X_{\{0,1,2\}}^{\{1\}}$ have the same eigenvalues. So,  $\lambda_{2}'=\lambda_{2}''$. The eigenvalues of $T_{4}[0]$ in $X_{\{0\}}^{\{0,1,3\}}$ are given by $\lambda^4$, where $\lambda$ is a positive solution of the equation $\sin\left(\frac{m}{\sqrt{2}}\right)=0.$ 
		
		Then, $m_1=\pi\sqrt{2}$ is the smallest positive solution of this equation and so $\lambda_{2}'=\lambda_{2}''=4\pi^4 \approx 389.636 $ is the least positive solution of  of $T_{4}[0]$ in $X_{\{0\}}^{\{0,1,3\}}$.
		
		By numerical approach, we have that $\lambda_{1}\approx -2.36^4\approx -31.2852$. Thus, $g_{M}\geq 0$ on $[0,1]\times [0,1]$ if, and only if, $M\in(-2.36^4,4\pi^4]$.
		
		Therefore, the function $g_{M}(t,s)$ satisfies the property $(P_{g_{1}})$ for all $M\in(-2.36^4,4\pi^4]$.
		
		\vspace*{2pt}
		We know from Example \ref{ejemlono} that $\frac{\partial}{\partial t} g_{M}(t,s)\geq 0$ on $[0,1]\times [0,1]$ if, and only if, ${M\in(-2.36^4,2.22^4]}$. So, the function $\frac{\partial}{\partial t} g_{M}(t,s)$ satisfies the property $(P_{g_{1}})$ for all $M\in(-2.36^4,2.22^4]$.
		
		In particular, $g_{0}(t,s)$ and $\frac{\partial}{\partial t} g_{0}(t,s)$ satisfy the property $(P_{g_{1}})$.
		
		\vspace*{2pt}
		The function $\frac{\partial^2}{\partial t^2} g_{M}(t,s)$ changes sign because $\frac{\partial}{\partial t} g_{M}(t,s)$ vanishes at points $t=0$ and $t=1$.
		
		\vspace*{2pt}
		On the other hand, we have from Theorem \ref{T::IP2} that $\frac{\partial^3}{\partial t^3} g_{M}(t,s)$ is nonpositive on ${[0,1]\times [0,1]}$ if, and only if, $M\in(\lambda_{1},0]$. Thus, $\frac{\partial^3}{\partial t^3} g_{M}(t,s)\le 0$ on $[0,1]\times [0,1]$ if, and only if, $M\in(-2.36^4,0]$.
		
		\vspace*{4pt}
		Let us study for the particular case of $M=0$ the following nonlinear problem
		{\small\begin{equation}\label{ProblemaNonlineal1}
			\left\{
			\begin{aligned}
				u^{(4)}(t)&=(t^4+1)\left(e^{-\sqrt{(u(t))^2+(u'(t))^{2}+(u''(t))^{2}+(u'''(t))^{2}}}+\frac{1}{\ln(e+|u(t)|+|u'(t)|+|u''(t)|+|u'''(t)|)}\right),\;\; t\in [0,1],\\
				u(0)&=u'(0)=0,\\
				u'(1)&=u'''(1)=0.
			\end{aligned}
			\right.
		\end{equation}}
		In this case, the function $f(t,x_{1},x_{2},x_{3},x_4)=(t^4+1)\,\left(e^{-\sqrt{x_{1}^{2}+x_{2}^2+x_{3}^2+x_4^2}}+\frac{1}{\ln(e+|x_{1}|+|x_{2}|+|x_{3}|+|x_4|)}\right)$ is continuous and satisfies conditions $(H_1)$ and $(H_{2})$.
		
		From the adjoint boundary conditions, we have that $\eta=2$ and $\gamma=0$ and so $\phi(s)=s^2$. 
		
		Using the Mathematica Software package (see \cite{Maquez}) to calculate the expressions of the Green's functions, we have that $g_{0}(t,s)$ is given by 
		\begin{equation*}
			g_{0}(t,s)=\left\{
			\begin{aligned}
				&-\frac{1}{12}\,s^2\left(2s+3\,(t-2)\,t\right),\;\; 0\leq s\leq t\leq 1,\\
				&\frac{1}{12}\,t^2\left(-3\,(s-2)\,s-2t\right),\;\; 0\leq t< s\leq 1.
			\end{aligned}
			\right.
		\end{equation*} 
		Moreover, we have that \[\tilde{v}(t,s)=\left\{
		\begin{aligned}
			&-\frac{1}{12}\,\left(2s+3\,(t-2)\,t\right),\;\; 0\leq s\leq t<1,\\
			&\frac{1}{12}\,\frac{t^2}{s^2}\left(-3\,(s-2)\,s-2t\right),\;\; 0< t< s\leq 1,\,t\neq 1,
		\end{aligned}
		\right.\]
		where $\tilde{v}(t,s)$ is the continuous extension of $\frac{g_{0}(t,s)}{\phi(s)}$ to $(0,1)\times [0,1]$. 
		
		Since 
		\[\frac{\partial}{\partial s}\tilde{v}(t,s)=\left\{
		\begin{aligned}
			&-\frac{1}{6},\;\; 0\leq s\leq t\leq 1,\\
			&\frac{1}{6}\,\frac{t^2}{s^3}\left(2t-3s\right),\;\; 0< t< s\leq 1,
		\end{aligned}
		\right.\] 
		is negative for all $(t,s)\in (0,1)\times [0,1]$ we have that $\tilde{v}(t,s)$ is decreasing as a function of $s$ for all $t\in (0,1)$. So,
		\begin{equation*}
			\begin{aligned}
				k_{1}(t)&=\min_{s\in I} \tilde{v}(t,s)=\tilde{v}(t,1)=\frac{t^2}{12}\, \left(3-2t\right),\\
				k_{2}(t) &=\max_{s\in I}\tilde{v}(t,s)=\tilde{v}(t,0)=\frac{1}{4}\,t\left(2-t\right). 
			\end{aligned}
		\end{equation*}
		Taking into account that these two functions are increasing on $[0,1]$, we have that
		$$k_{1}=k_{1}(1)=\frac{1}{6}\;\; \text{and}\;\; k_{2}=k_{2}(1)=\frac{1}{4}.$$
		
		Let us choose $I_{1}=[\frac{1}{6},1]$. Then, we have that
		$$m_{1}=\min_{t\in I_{1}}\,k_{1}(t)=k_{1}\left(\frac{1}{6}\right)=\frac{1}{162}.$$
		
		The first derivative of $g_{0}(t,s)$ is given by the expression 
		\begin{equation*}
			\frac{\partial }{\partial t}\,g_{0}(t,s)=\left\{
			\begin{aligned}
				&\frac{1}{2}\,s^2\left(1-t\right),\;\; 0\leq s\leq t\leq 1,\\
				&\frac{1}{2}\,t\left(s\,(2-s)-t\right),\;\; 0\leq t< s\leq 1.
			\end{aligned}
			\right.
		\end{equation*} 
		Analogously, it is not difficult to verify that \[\tilde{v}^{1}(t,s)=\left\{
		\begin{aligned}
			&\frac{1}{2}\,\left(1-t\right),\;\; 0\leq s\leq t<1,\\
			&\frac{1}{2}\,\frac{t}{s^2}\,\left(s\,(2-s)-t\right),\;\; 0< t< s\leq 1,
		\end{aligned}
		\right.\]
		where $\tilde{v}^{1}(t,s)$ is the continuous extension of $\frac{\frac{\partial}{\partial t}g_{0}(t,s)}{\phi(s)}$ to $(0,1)\times [0,1]$. 
		
		Since
		\[\frac{\partial}{\partial s}\tilde{v}^{1}(t,s)=\left\{
		\begin{aligned}
			&-\frac{1}{2},\;\; 0\leq s\leq t\leq 1,\\
			&\frac{t}{s^3}\,\left(t-s\right),\;\; 0< t< s\leq 1,
		\end{aligned}
		\right.\] 
		is negative for all $(t,s)\in (0,1)\times [0,1]$ we have that $\tilde{v}^{1}(t,s)$ is decreasing as a function of $s$ for all $t\in (0,1)$. So,
		\begin{equation*}
			\begin{aligned}
				k_{1}^{1}(t)&=\min_{s\in I} \tilde{v}^{1}(t,s)=\tilde{v}^{1}(t,1)=\frac{1}{2}\, t\,\left(1-t\right),\\
				k_{2}^{1}(t) &=\max_{s\in I}\tilde{v}^{1}(t,s)=\tilde{v}^{1}(t,0)=\frac{1}{2}\,\left(1-t\right). 
			\end{aligned}
		\end{equation*}
		In this case, we have that
		$$k_{1}^{1}=k_{1}^{1}\left(\frac{1}{2}\right)=\frac{1}{6}\;\; \text{and}\;\; k_{2}^{1}=k_{2}^{1}(0)=\frac{1}{2}.$$
		
		Therefore, the cone $K$ that we use to localize the solution is given by
		$$K=\left\{u\in C^{3}(I):\;\;
		\begin{aligned}  
			&u(t)\geq 0,\, u'(t)\geq 0,\, u'''(t)\le 0\;\; t\in [0,1],\\
			& u(t)\geq \frac{1}{3}\,t^2\,(3-2\,t)\,\|u \|_{\infty},\,u'(t)\geq t\,(1-t)\,\left\Vert u'\right\Vert_{\infty}
		\end{aligned}
		\right\}.$$
		
		Thus, from Theorem \ref{theoremmain}, there is at least one nontrivial solution $u\in K$ of problem \eqref{ProblemaNonlineal1}.
		
		\vspace*{.3cm}
		
		Finally, if we consider the following nonlinear problem for $M \in \R$:
		{\footnotesize\begin{equation}\label{ProblemaNonlineal}
			\left\{
			\begin{aligned}
				u^{(4)}(t)+M\,u(t)&=(t^4+1)\left(e^{-\sqrt{(u(t))^2+(u'(t))^{2}+(u''(t))^{2}+(u'''(t))^{2}}}+\frac{1}{\ln(e+|u(t)|+|u'(t)|+|u''(t)|+|u'''(t)|)}\right),\;\; t\in [0,1],\\
				u(0)&=u'(0)=0,\\
				u'(1)&=u'''(1)=0.
			\end{aligned}
			\right.
		\end{equation}}
		As we have pointed out at the beginning of this section, we know that $g_{M}$ satisfies the property $(P_{g_{1}})$ for all $M\in(-2.36^4,4\pi^4]$ for suitable functions $k_{1}[M](t)$ and $k_{2}[M](t)$, $t\in [0,1]$ that depend of the value of parameter $M$.
		
		Also, $\frac{\partial}{\partial t} g_{M}$ satisfies the property $(P_{g_{1}})$ for all $M\in(-2.36^4,2.22^4]$ for adequate functions $k_{1}^{1}[M]$ and $k_{2}^{1}[M](t)$, $t\in [0,1]$.
		
		Let us denote $k_{2}[M]=\max_{t\in [0,1]} |k_{2}[M](t)|$ and $k_{2}^{1}[M]=\max_{t\in [0,1]} |k_{2}^{1}[M](t)|$. 
		
		\vspace*{2pt}
		In this case, we have three situations depending of the value of $M$ where to localize the solution. They are given by the following cones:
		\begin{itemize}
			\item If $M\in(-2.36^4,4\pi^4]$, we consider the cone
			$$K_{0}=\left\{u\in C(I):\;\;
			\begin{aligned}  
				&u(t)\geq 0\;\; t\in [0,1],\\
				& u(t)\geq \frac{k_{1}[M](t)}{k_{2}[M]}\,\|u \|_{\infty}
			\end{aligned}
			\right\}.$$
			\item If $M\in(-2.36^4,2.22^4]$, we consider the cone
			$$K_{1}=\left\{u\in C^{1}(I):\;\;
			\begin{aligned}  
				&u(t)\geq 0,\, u'(t)\geq 0\;\; t\in [0,1],\\
				& u(t)\geq \frac{k_{1}[M](t)}{k_{2}[M]}\,\|u \|_{\infty},\,u'(t)\geq \frac{k_{1}^{1}[M](t)}{k_{2}^{1}[M]}\,\|u' \|_{\infty}
			\end{aligned}
			\right\}.$$
			\item If $M\in(-2.36^4,0]$, we consider the cone 
			$$K_{3}=\left\{u\in C^{3}(I):\;\;
			\begin{aligned}  
				&u(t)\geq 0,\, u'(t)\geq 0,\, u'''(t)\le 0\;\; t\in [0,1],\\
				& u(t)\geq \frac{k_{1}[M](t)}{k_{2}[M]}\,\|u \|_{\infty},\,u'(t)\geq \frac{k_{1}^{1}[M](t)}{k_{2}^{1}[M]}\,\|u' \|_{\infty}
			\end{aligned}
			\right\}.$$
		\end{itemize}
		Therefore, from Theorem \ref{theoremmain}, there is at least one nontrivial solution $u\in K_{i}$ for each $i\in \{0,1,3\}$ of problem \eqref{ProblemaNonlineal} according to the values of the parameter $M$.
		
		 Notice that, if $M \in (-31,2852,0]$, we can ensure the existence of a solution of problem~\eqref{ProblemaNonlineal}, which is positive and increasing on $(0,1]$ and whose first derivative is concave in $I$.
			
			Moreover, we can ensure the existence of a positive and increasing solution on $(0,1]$ of problem \eqref{ProblemaNonlineal}, whenever $M \in (-31.2852, 24.2891]$.
			
			Finally, if we look for positive solutions, not necessarily increasing, we can ensure their existence for all  $M \in (-31.2852, 389.636]$.
			
			Obviously, as much restrictive are the imposed conditions on the solutions we are looking for, as smaller is the interval of parameters $M$ for which we can ensure the existence of the finding solutions. In this sense we point out that $ \lambda_2' \approx  16\cdot \lambda_2^{1}.$
		
	\end{example}
	
	\section*{Acknowledgements}
	 The three authors were partially supported by Grant PID2020-113275GB-I00, funded by MCIN/AEI/10.13039/501100011033 and by “ERDF A way of making Europe” of the	“European Union”, and by Xunta de Galicia (Spain), project ED431C 2023/12.

\end{document}